\newtheorem{proposition}{Proposition}[section]
\newtheorem{theorem}{Theorem}[section]
\newtheorem{definition}{Definition}[section]
\newtheorem{corollary}{Corollary}[section]
\newtheorem{lemma}{Lemma}[section]
\newtheorem{remark}{Remark}[section]
\newtheorem{example}{Example}[section]
\DeclareMathOperator*{\esssup}{ess\,sup} 
\numberwithin{equation}{section}
\numberwithin{equation}{section}
\theoremstyle{plain}
\begin{document}

\begin{frontmatter}
\title{A weak version of path-dependent functional It\^o calculus}
\runtitle{A weak version of path-dependent functional It\^o calculus}

\begin{aug}
\author{\fnms{Dorival} \snm{Le\~ao}\thanksref{t1}\ead[label=e1]{leao@estatcamp.com.br}},
\author{\fnms{Alberto} \snm{Ohashi}\thanksref{m2}\ead[label=e2]{amfohashi@gmail.com}}
\and
\author{\fnms{Alexandre B.} \snm{Simas}\thanksref{m2}
\ead[label=e3]{alexandre@mat.ufpb.br}
\ead[label=u1,url]{http://www.foo.com}}

\runauthor{D. Le\~ao, A. Ohashi and A.B Simas}

\affiliation{Universidade de S\~ao Paulo\thanksmark{t1} and Universidade Federal da Para\'iba\thanksmark{m2}}

\address{Departamento de Matem\'atica Aplicada e Estat\'istica\\ Universidade de S\~ao
Paulo, 13560-970, S\~ao Carlos - SP, Brazil\\
\printead{e1}\\
\phantom{E-mail:\ }}

\address{Departamento de Matem\'atica\\ Universidade Federal da Para\'iba\\
13560-970, Jo\~ao Pessoa - Para\'iba, Brazil\\
\printead{e2}\\
\printead{e3}}
\end{aug}

\begin{abstract}
We introduce a variational theory for processes adapted to the multi-dimensional Brownian motion filtration that provides a differential structure allowing to describe infinitesimal evolution of Wiener functionals at very small scales. The main novel idea is to compute the ``sensitivities'' of processes, namely derivatives of martingale components and a weak notion of infinitesimal generators, via a finite-dimensional approximation procedure based on controlled inter-arrival times and approximating martingales. The theory comes with convergence results that allow to interpret a large class of Wiener functionals beyond semimartingales as limiting objects of differential forms which can be computed path wisely over finite-dimensional spaces. The theory reveals that solutions of BSDEs are minimizers of energy functionals w.r.t Brownian motion driving noise.
\end{abstract}

\begin{keyword}[class=MSC]
\kwd[Primary ]{60H07}
\kwd[; secondary ]{60H25}
\end{keyword}

\begin{keyword}
\kwd{Stochastic calculus of variations}
\kwd{Functional It\^o calculus}
\end{keyword}

\end{frontmatter}

\section{Introduction}
Let $(\Omega, \mathcal{F},\mathbb{P})$ be a probability space equipped with a filtration $\mathbb{F} = (\mathcal{F}_t)_{t\ge 0}$ generated by a multi-dimensional noise process $W$. The goal of this work is to present a systematic approach to concretely analyze the infinitesimal variation of a given $\mathbb{F}$-adapted process w.r.t $W$. A similar type of question has been studied over the last four decades by means of Malliavin calculus and White Noise analysis. In this context, infinitesimal variation of smooth random variables (in the sense of Malliavin) and stochastic distributions (in the sense of Hida) are studied without taking into account an underlying filtration, so that, loosely speaking, these approaches are anticipative in nature. In the present paper, we are interested in providing a non-anticipative calculus in order to characterize variational properties of adapted processes and, more importantly, we aim to provide concrete tools to obtain ``maximizers'' of variational problems w.r.t $W$. We also make a special effort to cover the largest possible class of processes adapted to a given $W$.

Recently, a new branch of stochastic calculus has appeared, known as functional
It\^o calculus, which results to be an extension of classical It\^o calculus to non-anticipative functionals
depending on the whole path of a noise $W$ and not only on its current value,
see e.g Dupire \cite{dupire}, Cont and Fourni\'e \cite{cont,cont1},  Cosso and Russo \cite{cosso1,cosso2}, Peng and Song \cite{peng2}, Buckdahn, Ma, and Zhang \cite{buckdhan}, Keller and Zhang \cite{keller}, Ohashi, Shamarova and Shamarov \cite{ohashi} and Oberhauser \cite{ober}. Inspired by Peng \cite{peng}, the issue of providing a suitable definition of path-dependent PDEs has attracted a great interest, see e.g Peng and Wang \cite{peng1}, Ekren, Keller, Touzi and Zhang \cite{touzi1}, Ekren, Touzi and Zhang \cite{touzi2,touzi3}, Ekren and Zhang \cite{ekren}, Cosso and Russo \cite{cosso3} and Flandoli and Zanco \cite{flandoli}. In the present work, we develop a weak version of the functional It\^o calculus inspired by the discretization scheme introduced by Le\~ao and Ohashi \cite{LEAO_OHASHI2013,LEAO_OHASHI2017}.

\subsection{Main setup and contributions}
In this paper, we are interested in developing a general differentiation theory for $\mathbb{F}$-adapted processes where $\mathbb{F}$ is generated by a state noise $W$ which drives the randomness of the system. We are interested in developing a general methodology to compute infinitesimal variations of $X$ w.r.t the underlying state noise $W$ in three fundamental cases:

\begin{enumerate}[label=(\roman*)]
  \item A priori, $X$ does not possess enough regularity w.r.t $W$.
  \item $X$ depends on the whole path of $W$.
  \item Explicit functional non-anticipative representations of $X$ are not available.
\end{enumerate}
Cases (i) and (iii) occur very frequently in stochastic control problems and it has been one of the motivations for the use of viscosity methods in PDEs in the Markovian setup. When situation (ii) takes place, we cannot rely on classical approaches. In order to deal with cases (i), (ii) and (iii), we develop a theory based on suitable discretizations on the level of the noise rather than $X$ itself. The methodology can be interpreted as a weak functional stochastic calculus for path-dependent systems which can be also interpreted path wisely over finite-dimensional spaces by means of a suitable regularization procedure. In contrast to the classical approximation schemes in the literature based on deterministic discretizations on the time scale, we develop a type of space-filtration discretization procedure $(\mathbb{F}^k)_{k\ge 1}$ which allows us to drastically reduce the dimension of the problem.

We choose the underlying state noise $W$ as a $d$-dimensional Brownian motion $B$ and the fundamental objects to be analyzed will be $\mathbb{F}$-adapted processes $X$ that we call as \textit{Wiener functionals}. The theory is designed to analyse the sensitivity of $X$ w.r.t $B$ under rather general assumptions and much beyond the standard literature on smooth pathwise functional calculus and other Sobolev-type formulations. The methodology is based on suitable approximating structures $$\mathcal{Y} = \big((X^k)_{k\ge 1},\mathscr{D}\big)$$
equipped with a continuous-time random walk approximation $\mathscr{D} = \{\mathcal{T},A^{k,j};\\
j=1\ldots,d, k\ge 1\}$ driven by a suitable class of waiting times $\mathcal{T} = \{T^{k,j}_n; 1\le j\le d, n,k\ge 1\}$ (see (\ref{stopping_times}) and (\ref{rw})) which encodes the evolution of the Brownian motion at small scales. The sequence of processes $(X^k)_{k\ge 1}$ has to be interpreted as a model simplification naturally defined on finite-dimensional spaces which allows us to concretely approach variational properties of $X$ w.r.t $B$ by means of suitable derivative operators $\big(\mathcal{D}^{\mathcal{Y},k}X, U^{\mathcal{Y},k}X\big)$, where

\begin{equation}\label{dintrod1}
\mathcal{D}^{\mathcal{Y},k}X\quad \text{encodes the variation of}~X~\text{w.r.t}~B
\end{equation}
 and

\begin{equation}\label{dintrod2}
U^{\mathcal{Y},k}X\quad \text{encodes variations of}~X~\text{``orthogonal'' to}~B.
\end{equation}
The variational operators (\ref{dintrod1}) and (\ref{dintrod2}) are constructed via optional stochastic integration and $\mathbb{F}^k$-dual predictable projections, respectively, in the spirit of Dellacherie and Meyer \cite{dellacherie} at the level of a given probability measure. Due to the nice structure coming from $\mathscr{D}$ which allows us to reduce the dimension, the operators (\ref{dintrod1}) and (\ref{dintrod2}) can also be computed path wisely in a very concrete way.

Conceptually, the approach developed in this article for analyzing the infinitesimal variation of Wiener functionals w.r.t Brownian state at very small scales consists of three steps.

\begin{enumerate}
  \item Based on the available information given by a Wiener functional $X$ at hand, one designs a discrete-type structure $\mathcal{Y} = \big((X^k)_{k\ge 1}, \mathscr{D}\big)$ where $X^k = X^k(\mathscr{D})$ is a pure-jump process driven by a discrete-type skeleton $\mathscr{D}=\{\mathcal{T},A^{k,j};j=1,\ldots,d, k\ge 1\}$. We call $\mathcal{Y} = \big((X^k)_{k\ge 1}, \mathscr{D}\big)$ as an \textit{imbedded discrete structure} associated with $X$ (see Definition \ref{IDS}).
  \item At the level of imbedded discrete structures, one has concrete functionals defined on finite-dimensional spaces where one is able to compute path wisely the variational operators $\big(\mathcal{D}^{\mathcal{Y},k}X, U^{\mathcal{Y},k}X\big)$ of $(X^k)_{k\ge 1}$ w.r.t $\mathscr{D}$ freely. At this point, one tries to obtain as much information as possible of $X$ by computing $\big(\mathcal{D}^{\mathcal{Y},k}X, U^{\mathcal{Y},k}X\big)$.
  \item In a final step, one has to prove the information obtained in step 2 is consistent with $X$. At this stage, one has to prove the $X^k\rightarrow X$ as the level of discretization $k\rightarrow +\infty$ and, eventually if $X$ admits enough regularity, the limits of $\big(\mathcal{D}^{\mathcal{Y},k}X, U^{\mathcal{Y},k}X\big)$ might be used to get limiting representations and a more refined information on $X$.
\end{enumerate}
It is important to emphasize that it is shown that each Wiener functional $X$ is equipped with a canonical imbedded discrete structure $\mathcal{Y} = \big((X^k)_{k\ge 1},\mathscr{D}\big)$ (see (\ref{canonicalST}), Lemma \ref{deltaconvergence} and Corollary \ref{deltaisstable} for the particular case of Dirichlet processes). However, our methodology requires effort on the part of the ``user'' in order to specify the ``good'' structure that is suitable for analyzing a given problem at hand. The theory generalizes other weak formulations \cite{buckdhan,cont2,peng2} and also the pathwise approaches \cite{dupire,cont,cont1,ohashi,cosso1,cosso2} restricted to Brownian states.

The philosophy of this work is \text{not} to propose representations for Wiener functionals (although we present some of them), but rather a concrete way to depict the infinitesimal variation of processes at very small scales via $\big(\mathcal{D}^{\mathcal{Y},k}X, U^{\mathcal{Y},k}X\big)$ for a given choice of structure $\mathcal{Y}$ associated with $X$. In this direction, the article reveals that a large class of Wiener functionals (including Dirichlet processes) can be viewed as limiting objects of differential forms attached to imbedded discrete structures which have to be carefully designed case by case.

In Theorem \ref{towardsD1}, we construct a differential structure for a Wiener functional (see Definition \ref{defweakder}) of the form

\begin{equation}\label{repINTRODUC}
X(t) = X(0) + \sum_{j=1}^d \int_0^tH_j dB^j + V(t);  0\le t\le T
\end{equation}
where $dB^j$ is the It\^o integral, $H=(H_1, \ldots, H_d)$ is an $\mathbb{F}$-adapted square-integrable process and $V$ has continuous paths without a priori regularity conditions. A non-anticipative process $\mathcal{D}^\mathcal{Y}X$ is constructed based on limits of the variations

$$\mathcal{D}^{\mathcal{Y},k,j}X (T^{k,j}_n) = \frac{\Delta X^k(T^{k,j}_n)}{\Delta A^{k,j}(T^{k,j}_n)}; n\ge 1, 1\le j\le d$$
for a given imbedded discrete structure $\mathcal{Y}  = \big((X^k)_{k\ge 1},\mathscr{D}\big)$. It turns out $\mathcal{D}^\mathcal{Y}X = H$ for every structure $\mathcal{Y}$ (up to a stability property) associated with $X$. Then, we shall define $\mathcal{D}X = \mathcal{D}^\mathcal{Y}X$ for every stable imbedded discrete structure $\mathcal{Y}$ (see Theorem \ref{towardsD1} and (\ref{defDERIVATIVE})).

The differential structure of $X$ heavily depends on possibly orthogonal variations (\ref{dintrod2}) of $X$ w.r.t noise and this is encoded by the drift term $V$ in (\ref{repINTRODUC}) as demonstrated by Proposition \ref{diffform}. Theorem \ref{ItoP} and Section \ref{pvarsec} show that distinct classes of processes are clearly distinguished by (\ref{dintrod2}) as $k\rightarrow+\infty$ which allows us to investigate variational properties of Wiener functionals very concretely in applications to control theory and much beyond semimartingales. Theorem \ref{strongDth1} and Section \ref{pvarsec} show the methodology developed in this article applies to very irregular drifts of unbounded variation and it covers, in particular, drifts of finite $p$-variation $(1 \le p < 2 )$.


In \cite{LOS1}, the authors present a universal variational characterization of the drifts of weakly differentiable processes. It is revealed the asymptotic behavior of (\ref{dintrod2}) is always encoded by suitable limits of integral functionals of horizontal-type perturbations and first-order variation w.r.t noise having a two-parameter occupation time process $(s,x)\mapsto\mathbb{L}^{k,x}(s)$ for $\mathscr{D}$ as integrators. The connection between weak differentiability and Brownian local-times are established under finite $(p,q)$-variation regularity in the sense of Young.

\subsection{Applications of the theory}
As a test of the relevance of our methodology, in \cite{LEAO_OHASHI2017.1,LEAO_OHASHI2017.2,bezerra}, we apply the theory of this article to develop a concrete and systematic method of obtaining near-stochastic optimal controls in a fully non-Markovian setting

\begin{equation}\label{opintr}
\arg \max_{\eta\in U^T_0}\mathbb{E}[\xi(X^\eta)],
\end{equation}
where $\xi: C([0,T];\mathbb{R}^n)\rightarrow\mathbb{R}$ is a payoff functional and $\{X^\eta; \eta\in U^T_0\}$ is a family of abstract Wiener functionals parameterized by a set of possibly mutually singular measures $U^T_0$. In \cite{LEAO_OHASHI2017.1,LEAO_OHASHI2017.2}, the authors present a concrete method for computing near-stochastic optimal controls for non-Markovian systems of path-dependent SDEs driven by Gaussian noises (including fractional Brownian motion), where in \cite{LEAO_OHASHI2017.1}, both drift and diffusion components are controlled. Monte Carlo methods are developed in \cite{bezerra} for the particular case of optimal stopping problems. Rather than developing representation results for value functionals, the variational theory developed in this article permits to extract near-optimal controls in (\ref{opintr}) without requiring a priory regularity assumptions on value processes driven by controlled Wiener functionals $\{X^\eta; \eta\in U^T_0\}$. The analysis is made via a maximization procedure based on $U^{\mathcal{Y},k}X$ (or its nonlinear version) for a choice of imbedded structure $\mathcal{Y}$ of the controlled state. In \cite{LEAO_OHASHI2017.2}, the differential operator (\ref{dintrod2}) (a nonlinear version in \cite{LEAO_OHASHI2017.1}) plays the role of a generalized Hamiltonian which yields a feasible construction of near-optimal controls beyond the Markovian case.


In the present article, we also exhibit a novel characterization of solutions of backward SDEs (BSDEs) as solutions of a variational problem w.r.t Brownian motion state. In Theorem \ref{bsdecor}, it is shown that an It\^o process $Y$ is a solution of a BSDE (\ref{BSDE}) with terminal condition $\xi$ and driver $g$ if, and only if,

\begin{equation}\label{minimiINTR}
\mathcal{D}Y~\text{minimizes energy in the sense of}~(\ref{minen})~\text{and}~Y(T)=\xi.
\end{equation}


Although the connection between BSDEs and control problems of the form (\ref{opintr}) is well-known (see e.g \cite{elliot}), to our best knowledge, Theorem \ref{bsdecor} is the first result connecting BSDEs to variational problems w.r.t the Brownian motion driving noise.

The setup of this article is based on a fixed probability measure. The fully non-linear case is partially treated in \cite{LEAO_OHASHI2017.1}, where the authors aggregate imbedded discrete structures parameterized by mutually singular measures arising from non-Markovian stochastic control problems of the form (\ref{opintr}). However, a full-fledged theory for abstract functionals defined up to polar sets is postponed to further investigations.

The remainder of this article is organized as follows. The next section summarizes some useful notations used in this work. Section \ref{skeletonsection} provides the discrete structure that lays the foundation of this work. Section \ref{DIFFSTRUC} presents the differential system associated with the skeleton. Section \ref{differentialSECTION} develops the asymptotic limits of the differential operators presented in Section \ref{DIFFSTRUC} and some examples are discussed.

\subsection{Notation}
Throughout this article, we are going to fix a filtered probability space $(\Omega, \mathbb{F},\mathbb{P})$ which supports a $d$-dimensional Brownian motion $B = (B^1, \ldots, B^d)$ where $\mathbb{F}:=(\mathcal{F}_t)_{t\ge 0}$ is the usual $\mathbb{P}$-augmentation of the filtration generated by $B$ under a fixed probability measure $\mathbb{P}$. For a given terminal time $0< T< \infty$, let $\mathbf{B}^p(\mathbb{F})$ be the Banach space of all $\mathbb{F}$-adapted real-valued c\`adl\`ag processes $X$ such that

$$
\mathbb{E}\sup_{0\le t\le T}|X(t)|^p < \infty,
$$
where $1\le p< \infty$. We denote $\mathbf{H}^p(\mathbb{F})$ as the subset of $\mathbf{B}^p(\mathbb{F})$ which consists of all $\mathbb{F}$ - martingales starting from zero. We shall also equip $\mathbf{B}^p(\mathbb{F})$ with the weak topology $\sigma(\mathbf{B}^p,\text{M}^q)$ where $1\le p,q < \infty$ with $\frac{1}{p}+\frac{1}{q}=1$. Recall that the topological dual $\text{M}^q(\mathbb{F})$ of $\mathbf{B}^p(\mathbb{F})$ is the space of processes $V = (V^{pr}, V^{pd})$ such that

\

\noindent (i) $V^{pr}$ and $V^{pd}$ are right-continuous of bounded variation such that $V^{pr}$ is $\mathbb{F}$
- predictable with $V^{pr}_0=0$ and $V^{pd}$ is $\mathbb{F}$ - optional and purely discontinuous.

\

\noindent (ii) $Var (V^{pd}) + Var(V^{pr})\in L^q(\mathbb{P});~\frac{1}{p}+\frac{1}{q}=1,$

\

\noindent where $Var(\cdot)$ denotes the total variation of a bounded variation process over the interval $[0,T]$. The space $\text{M}^{q}(\mathbb{F})$ has the strong topology given by

$$\|V\|_{M^q}: = \|Var(V^{pr})\|_{L^q} +
\|Var(V^{pd})\|_{L^q}.$$

\noindent The duality pair is given by

$$(V,X):= \mathbb{E}\int_{0}^{T} X(s-) dV^{pr}(s) +
\mathbb{E}\int_0^{T} X(s)dV^{pd}(s);\quad X \in \text{B}^p(\mathbb{F}),$$
where the following estimate holds

$$|(V, X)| \le \|V\|_{M^q} \|X\|_{\text{B}^p},$$
for every $V \in \text{M}^q(\mathbb{F})$, $X \in \mathbf{B}^p(\mathbb{F})$ such that $1\le p < \infty$ and $\frac{1}{p} + \frac{1}{q}=1$. We denote $\sigma(\mathbf{B}^p,\text{M}^q)$ the weak topology of $\text{B}^p(\mathbb{F})$. We refer the reader to~e.g~\cite{dellacherie,LEAO_OHASHI2013} for more details on this topology. We will see that the $\sigma(\mathbf{B}^p,\text{M}^q)$-topology will be quite natural for the asymptotic limits of this article.

Inequalities between random variables are understood in the $\mathbb{P}$-a.s sense. Inequalities between processes are understood up to evanescent sets, otherwise, it will be up to $Leb\times \mathbb{P}$-null sets, where $Leb$ is the Lebesgue measure on $\mathbb{R}_+$. The space of $\mathbb{F}$-adapted processes such that

$$\mathbb{E}\int_0^T |X(t)|^2dt < \infty$$
is denoted by $L^2_a(\mathbb{P}\times Leb)$. If $E$ is a Borel set, then we denote $\mathcal{B}(E)$ as the Borel sigma algebra of $E$. The usual jump of a process is denoted by $\Delta Y(t):=Y(t)-Y(t-)$ where $Y(t-)$ is the left-hand limit of a c\`adl\`ag process $Y$. Moreover, for any two stopping times $S$ and $J$, we denote the stochastic intervals $[[S,J [[:=\{(\omega,t); S(\omega) \le t < J(\omega) \}$, $[[S, S]]:=\{(\omega,t); S(\omega)=t \}$ and so on.

In order to make clear the information encoded by a path $x \in D([0,t];\mathbb{R}^d)$ up to a given time $0\le r\le t$, we denote $x_r:= \{x(s): 0 \leq s \leq r \}$ and the value of $x$ at time $0 \leq u \leq t$ is denoted by $x(u)$. This notation is naturally extended to processes. Sometimes, we need to stress that we are working with a functional representation in the spirit of functional calculus~(see e.g~\cite{dupire,cont}). We denote $D([0,t];\mathbb{R}^d)$ as the linear space of $\mathbb{R}^d$-valued c\`adl\`ag paths on $[0,t]$ and we set $\Lambda:=\{(t,\omega_t); (t,\omega)\in [0,T]\times D([0,T];\mathbb{R}^d)\}$. Similarly, we denote by $C([0,t];\mathbb{R}^d)$ the linear space of continuous paths on $[0,t]$ and we set $\hat{\Lambda}: = \{(t,\omega_t); (t,\omega)\in [0,T]\times C([0,T];\mathbb{R}^d)\}$.

\section{The underlying discrete skeleton}\label{skeletonsection}
The weak functional stochastic calculus developed in this paper will be constructed from a class of pure jump processes driven by suitable waiting times which describe the local behavior of the Brownian motion: We set $T^{k,j}_0:=0$ and

\begin{equation}\label{stopping_times}
T^{k,j}_n := \inf\{T^{k,j}_{n-1}< t <\infty;  |B^{j}(t) - B^{j}(T^{k,j}_{n-1})| = \varepsilon_k\}, \quad n \ge 1,
\end{equation}
where $\sum_{k\ge 1}\epsilon_k^2 < \infty$. For each $j\in \{1,\ldots,d \}$, the strong Markov property yields the family $(T^{k,j}_n)_{n\ge 0}$ is a sequence of $\mathbb{F}$-stopping times where the increments $\{\Delta T^{k,j}_n ; n\ge 1\}$ is an i.i.d sequence with the same distribution as $T^{k,j}_1$. By the Brownian scaling property, $\Delta T^{k,j}_1 = \epsilon^2_k \tau$ (in law) where $\tau$ is an absolutely continuous variable with mean equals one and with all finite moments (see e.g \cite{Burq_Jones2008}). Then, we define

\begin{equation}\label{rw}
A^{k,j} (t) := \sum_{n=1}^{\infty} \epsilon_k \sigma^{k,j}_n1\!\!1_{\{T^{k,j}_n\leq t \}};~t\ge0,~j=1\ldots, d, k\ge 1
\end{equation}
where the size of the jumps $\{\sigma^{k,j}_n; n\ge 1\}$ is given by

$$
\sigma^{k,j}_n:=\left\{
\begin{array}{rl}
1; & \hbox{if} \ \Delta A^{k,j}(T^{k,j}_n) > 0 \\
-1;& \hbox{if} \ \Delta A^{k,j}(T^{k,j}_n)< 0. \\
\end{array}
\right.
$$
One can easily check that $\{\sigma^{k,j}_n; n\ge 1\}$ is an i.i.d sequence of $\frac{1}{2}$-Bernoulli random variables for each $k\ge 1$ and $j=1,\ldots, d$. Moreover, $\{\Delta A^{k,j}(T^{k,j}_n); n\ge 1\}$ is independent from $\{\Delta T^{k,j}_n; n\ge 1\}$ for each $j=1,\ldots, d$ and $k\ge 1$. By construction

\begin{equation}\label{akb}
\sup_{t\ge 0}|A^{k,j}(t) - B^j(t)|\le \epsilon_k~a.s
\end{equation}
for every $k\ge 1$.

Let $\mathbb{F}^{k,j} := \{ \mathcal{F}^{k,j}_t; t\ge 0 \} $ be the natural filtration generated by $\{A^{k,j}(t);  t \ge 0\}$.  One should notice that $\mathbb{F}^{k,j}$ is a filtration of discrete type~(see Section 4 (Chap 11) and Section 5 (Chap 5) in ~\cite{he}) in the sense that

\[
\mathcal{F}^{k,j}_t = \Big\{\bigcup_{\ell=0}^\infty D_\ell \cap \{T^{k,j}_{\ell} \le t < T^{k,j}_{\ell+1}\}; D_\ell\in \mathcal{F}^{k,j}_{T^{k,j}_\ell}~\text{for}~\ell \ge 0 \Big\},~t\ge 0,
\]
where

$$\mathcal{F}^{k,j}_{T^{k,j}_m}=\sigma(T^{k,j}_1, \ldots, T^{k,j}_m, \Delta A^{k,j}(T^{k,j}_1), \ldots, \Delta A^{k,j}(T^{k,j}_m))$$
and $\mathcal{F}^{k,j}_0 = \{\Omega, \emptyset \}$ and for $m\ge 1$ and $j=1,\ldots, d$. From Th 5.56 in \cite{he}, we know that

$$\mathcal{F}^{k,j}_{T^{k,j}_m}\cap \{T^{k,j}_m \le t < T^{k,j}_{m+1}\} = \mathcal{F}^{k,j}_t\cap \{T^{k,j}_m \le t < T^{k,j}_{m+1}\},$$
for each $m\ge 0$ and $j=1,\ldots, d$. In this case, $\mathbb{F}^{k,j}$ is a jumping filtration in the sense of Jacod and Skorohod \cite{jacod}. From Th 5.52 in \cite{he}, $\mathbb{F}^{k,j}$ is right-continuous. In the sequel, we denote $F_k$ as the distribution function of $\Delta T^{k,1}_1$ and $f_k := F'_k$.
\begin{lemma}\label{angleLemma}
The process $A^{k,j}$ is a square-integrable $\mathbb{F}^{k,j}$-martingale on $[0,T]$ and its angle bracket is

\begin{equation}\label{angleformula}
\langle A^{k,j},A^{k,j}\rangle (t)=
\epsilon_k^2\int_0^t h^{k,j}(s)ds; 0\le t\le T,
\end{equation}
where

$$h^{k,j}(s): =  \sum_{n=0}^\infty \frac{f_k(s-T^{k,j}_n)}{1-F_k(s-T^{k,j}_n)}\mathds{1}_{\{T^{k,j}_n < s\le T^{k,j}_{n+1}\}}; 0\le s\le T,$$
for $j=1,\ldots, d$.
\end{lemma}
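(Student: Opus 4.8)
The plan is to exploit the jumping-filtration structure of $\mathbb{F}^{k,j}$, which reduces everything to a one-dimensional renewal-type computation on the increments $\Delta T^{k,j}_n$. First I would establish square-integrability: by the scaling relation $\Delta T^{k,j}_1 = \epsilon_k^2\tau$ in law with $\tau$ having all finite moments, Wald's identity together with the i.i.d.\ structure of $\{\Delta T^{k,j}_n\}$ controls the number of jumps of $A^{k,j}$ in $[0,T]$, and since each jump has size $\epsilon_k$, one gets $\mathbb{E}\sup_{0\le t\le T}|A^{k,j}(t)|^2<\infty$ (alternatively via \eqref{akb}, $\sup_t|A^{k,j}(t)-B^j(t)|\le\epsilon_k$ a.s., so $A^{k,j}$ inherits square-integrability on $[0,T]$ directly from $B^j$).

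Next I would check the martingale property. Since $\mathbb{F}^{k,j}$ is a discrete-type (jumping) filtration, it suffices to verify the martingale identity across the deterministic times interlaced with the stopping times $T^{k,j}_n$; concretely, on $\{T^{k,j}_n\le t<T^{k,j}_{n+1}\}$ the conditional expectation $\mathbb{E}[A^{k,j}(t')-A^{k,j}(t)\mid\mathcal{F}^{k,j}_t]$ reduces, using the description of $\mathcal{F}^{k,j}_t\cap\{T^{k,j}_n\le t<T^{k,j}_{n+1}\}=\mathcal{F}^{k,j}_{T^{k,j}_n}\cap\{T^{k,j}_n\le t<T^{k,j}_{n+1}\}$ from Th.\ 5.56 in \cite{he}, to the statement that the next jump $\epsilon_k\sigma^{k,j}_{n+1}$ has conditional mean zero given that it occurs — which holds because $\sigma^{k,j}_{n+1}$ is a symmetric Bernoulli independent of the waiting times. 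So $A^{k,j}$ is a martingale. (One can also see this by noting $A^{k,j}$ is the optional projection, or the $\mathbb{F}^{k,j}$-conditional version, of $B^j$ onto $\mathbb{F}^{k,j}$ up to the $\epsilon_k$-discretization; but the direct argument is cleaner.)

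For the angle bracket, I would compute the predictable compensator of $[A^{k,j},A^{k,j}](t)=\sum_{n\ge1}\epsilon_k^2\mathds{1}_{\{T^{k,j}_n\le t\}}$, i.e.\ the compensator of the counting process $N^{k,j}(t):=\sum_{n\ge1}\mathds{1}_{\{T^{k,j}_n\le t\}}$, scaled by $\epsilon_k^2$ (the jump sizes squared are deterministically $\epsilon_k^2$, so the bracket is just $\epsilon_k^2 N^{k,j}$). Since $\{\Delta T^{k,j}_n\}$ are i.i.d.\ with distribution function $F_k$ and density $f_k$, the process $N^{k,j}$ is a renewal counting process whose $\mathbb{F}^{k,j}$-compensator is the classical renewal hazard: on $\{T^{k,j}_n<s\le T^{k,j}_{n+1}\}$ the compensator has density $f_k(s-T^{k,j}_n)/(1-F_k(s-T^{k,j}_n))$ with respect to $ds$. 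Summing over $n$ gives exactly $\int_0^t h^{k,j}(s)\,ds$, hence $\langle A^{k,j},A^{k,j}\rangle(t)=\epsilon_k^2\int_0^t h^{k,j}(s)\,ds$. The cleanest justification of the hazard-rate formula is to verify directly that $N^{k,j}(t)-\int_0^t h^{k,j}(s)\,ds$ is an $\mathbb{F}^{k,j}$-martingale, again by reducing to one inter-arrival interval at a time and using that $\mathbb{P}(T^{k,j}_{n+1}>s+u\mid\mathcal{F}^{k,j}_{T^{k,j}_n},\,T^{k,j}_{n+1}>s)=(1-F_k(s-T^{k,j}_n+u))/(1-F_k(s-T^{k,j}_n))$.

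The main obstacle is a regularity/integrability technicality rather than a conceptual one: one must ensure $h^{k,j}$ is well-defined and locally integrable, i.e.\ that $F_k(s-T^{k,j}_n)<1$ for $s\le T$ on the relevant event and that $\int_0^T h^{k,j}(s)\,ds<\infty$ a.s.\ (equivalently $\mathbb{E} N^{k,j}(T)<\infty$, which follows from the renewal/Wald bound above). A subtle point is that $f_k=F_k'$ is only asserted to exist because $\Delta T^{k,1}_1$ is absolutely continuous; one should confirm the density version used is the one making the compensator absolutely continuous, and handle the null set where $s=T^{k,j}_n$ for some $n$. Once the compensator identity for $N^{k,j}$ is in hand, multiplying by the deterministic constant $\epsilon_k^2$ and invoking uniqueness of the predictable compensator (the angle bracket) finishes the proof.
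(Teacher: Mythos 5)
Your proposal is correct and follows essentially the same route as the paper: square-integrability is obtained by comparison with $B^j$ via (\ref{akb}), and the angle bracket is identified as $\epsilon_k^2$ times the compensator of the arrival counting process, computed through the hazard rate $f_k/(1-F_k)$ of the i.i.d.\ inter-arrival times. The only cosmetic difference is that the paper keeps the marks $\sigma^{k,j}_n$ and invokes Br\'emaud's theorem on dual predictable projections of marked point processes (and cites \cite{germano} for the martingale property), whereas you drop the marks and verify the compensator and martingale identities directly, which is equivalent.
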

\begin{proof}
See Section \ref{AppendixangleLemma} (Appendix) for the proof of this result.
\end{proof}
In the sequel, we write $\vee_{i=1}^m a_i: = \max\{a_1, \ldots, a_m\}$ for $(a_i)_{i=1}^m\in \mathbb{R}^m; m\ge 1$.

\begin{lemma}\label{meshlemma}
For every $q\ge 1$ and $\alpha\in (0,1)$, there exists a constant $C$ which depends on $q\ge 1$ and $\alpha$ such that
$$
\mathbb{E}|\vee_{n\ge 1}\Delta T^{k}_n|^q 1\!\!1_{\{T^k_n\le T\}} \le C \Big(\epsilon^{2q} \lceil\epsilon^{-2}_k T\rceil^{(1-\alpha)}\Big)
$$
for every $k\ge 1$.
\end{lemma}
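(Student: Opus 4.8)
The plan is to estimate $\mathbb{E}\,|\vee_{n\ge 1}\Delta T^k_n|^q 1\!\!1_{\{T^k_n\le T\}}$ by controlling the total number of stopping times that fall in $[0,T]$ together with a uniform moment bound on each increment, and then trade off between these two quantities using the free parameter $\alpha$. First I would fix $j$ (the bound is for a single coordinate, so write $\Delta T^k_n$ for $\Delta T^{k,j}_n$) and observe that on the event $\{T^k_n \le T\}$ the index $n$ is at most $N_k(T) := \#\{n\ge 1 : T^k_n \le T\}$. By the scaling relation $\Delta T^{k}_1 \stackrel{d}{=} \epsilon_k^2 \tau$ with $\tau$ having mean one and all finite moments, $N_k(T)$ concentrates around $\epsilon_k^{-2}T$; a renewal/large-deviation argument (or a direct Chernoff bound, using that $\tau$ has exponential moments near zero — this follows from the explicit density of the exit time of Brownian motion from an interval, see \cite{Burq_Jones2008}) shows that $\mathbb{P}\big(N_k(T) > 2\lceil \epsilon_k^{-2}T\rceil\big)$ decays faster than any power of $\epsilon_k$, so the contribution of that event is negligible.

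Next, on the complementary event I would bound the maximum increment by a union bound: for any threshold $a>0$,
\begin{equation*}
\mathbb{P}\Big(\vee_{n\le M}\Delta T^k_n > a,\ N_k(T)\le M\Big) \le M\,\mathbb{P}(\Delta T^k_1 > a) = M\,\mathbb{P}(\tau > a\epsilon_k^{-2}),
\end{equation*}
with $M = 2\lceil \epsilon_k^{-2}T\rceil$. Since $\tau$ has all finite moments (indeed exponential moments), $\mathbb{P}(\tau > x)$ decays at least like $e^{-cx}$ for small-to-moderate $x$ and certainly faster than any polynomial. Writing $Z_k := \vee_{n\le M}\Delta T^k_n$ and integrating, $\mathbb{E} Z_k^q = \int_0^\infty q a^{q-1}\mathbb{P}(Z_k > a)\,da$. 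Splitting this integral at $a_k := \epsilon_k^2 \lceil\epsilon_k^{-2}T\rceil^{\beta}$ for a suitable $\beta \in (0,1)$ to be chosen: for $a \le a_k$ use the trivial bound $\mathbb{P}(Z_k>a)\le 1$, giving a contribution $\lesssim a_k^q = \epsilon_k^{2q}\lceil\epsilon_k^{-2}T\rceil^{q\beta}$; for $a > a_k$ use the union bound above, which gives $\lesssim M \int_{a_k}^\infty q a^{q-1}\mathbb{P}(\tau > a\epsilon_k^{-2})\,da$, and the exponential tail of $\tau$ makes this term exponentially small in $\lceil\epsilon_k^{-2}T\rceil^{\beta}$, hence negligible compared to any power. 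Choosing $\beta$ so that $q\beta \le 1-\alpha$ — e.g. $\beta = (1-\alpha)/q$ — yields the claimed bound $C\big(\epsilon_k^{2q}\lceil\epsilon_k^{-2}T\rceil^{1-\alpha}\big)$, with $C$ depending only on $q$ and $\alpha$ (through the constants in the tail estimates for $\tau$).

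The main obstacle I anticipate is making the tail estimate for the renewal counting function $N_k(T)$ and for the maximal increment genuinely uniform in $k$ with explicit control, rather than merely asymptotic. One must be careful that the "negligible" terms (the event $\{N_k(T) > 2\lceil\epsilon_k^{-2}T\rceil\}$ and the large-$a$ part of the integral) really are dominated by $\epsilon_k^{2q}\lceil\epsilon_k^{-2}T\rceil^{1-\alpha}$ for \emph{all} $k$, not just large $k$; since $\lceil\epsilon_k^{-2}T\rceil \ge 1$ always, and the negligible terms are superpolynomially small in $\lceil\epsilon_k^{-2}T\rceil$ while the target term is at least of order $\epsilon_k^{2q}$, this should go through once the tail constants for $\tau$ are pinned down. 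A secondary technical point is the transition from the maximum over the random index set $\{n : T^k_n \le T\}$ to a maximum over a deterministic range $\{1,\dots,M\}$, which is handled cleanly by the inclusion $\{\vee_{n\ge1}\Delta T^k_n 1\!\!1_{\{T^k_n\le T\}} > a\} \subseteq \{N_k(T) > M\} \cup \{\vee_{n\le M}\Delta T^k_n > a\}$ noted above. Everything else is a routine layer-cake computation.
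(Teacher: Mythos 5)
Your proposal is correct and follows the same overall skeleton as the paper's proof: reduce to one coordinate, cap the number of relevant increments at $M=2\lceil\epsilon_k^{-2}T\rceil$, and control the event $\{N_k(T)\ge M\}$ by a lower-tail Cram\'er/Chernoff bound for the i.i.d.\ sum $\tau_1+\dots+\tau_M$ (the paper bounds that contribution by $T^q\exp\big(-2\lceil\epsilon_k^{-2}T\rceil H(1/2)\big)$, using that the maximum is trivially at most $T$ on $\{T^k_n\le T\}$ --- this is the one point you should make explicit when you declare that event ``negligible'', and both you and the paper then need the easy observation that this exponentially small term is absorbed into $C\epsilon_k^{2q}\lceil\epsilon_k^{-2}T\rceil^{1-\alpha}$). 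Where you genuinely diverge is the main estimate $\mathbb{E}\big[(\vee_{n\le M}\Delta T^{k,j}_n)^q\big]\lesssim \epsilon_k^{2q}M^{1-\alpha}$: the paper obtains the factor $M^{1-\alpha}$ from a H\"older-type lemma for the maximum of i.i.d.\ nonnegative variables (Lemma \ref{l.estimativas}), which requires only $\mathbb{E}[\tau^{q/(1-\alpha)}]<\infty$, whereas you use a union bound plus a layer-cake split at $a_k=\epsilon_k^2\lceil\epsilon_k^{-2}T\rceil^{(1-\alpha)/q}$ and invoke the exponential tail of $\tau$ from \cite{Burq_Jones2008} to make the large-$a$ part negligible. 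Both routes give the stated bound; yours needs stronger (but available) tail information --- in fact a sufficiently high polynomial moment of $\tau$ would already suffice --- while the paper's lemma is a cleaner, cutoff-free statement that isolates exactly the $n^{1-\alpha}$ trade-off coming from H\"older's inequality. Two small bookkeeping points: the statement concerns the merged times $T^k_n$ over all $d$ coordinates, and the paper passes to a fixed coordinate via $\vee_n\Delta T^k_n\mathds{1}_{\{T^k_n\le T\}}\le \vee_n\Delta T^{k,j}_n\mathds{1}_{\{T^{k,j}_n\le T\}}$ a.s., a one-line reduction you should record rather than simply fixing $j$ at the outset; and your "negligible" terms should be checked against the target $\epsilon_k^{2q}\lceil\epsilon_k^{-2}T\rceil^{1-\alpha}$ uniformly in $k$, which works exactly as you anticipate since $\lceil\epsilon_k^{-2}T\rceil\ge 1$.
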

\begin{proof}
In the sequel, we denote $\tau:=\inf\{t\ge 0; |W(t)| = 1\}$ for a real-valued Brownian motion $W$ and let $f_\tau$ be the density of $\tau$. From Lemma 3 in \cite{Burq_Jones2008}, we know that $f_\tau(x) = o (e^{-(\gamma-\varepsilon)})$ as $x\rightarrow +\infty$ for $\gamma=\frac{\pi^2}{8}$ and any $\varepsilon < \gamma$. In this case, one can check there exists $\lambda > 0$ such that the Cramer condition holds

$$\varphi(\lambda):=\mathbb{E}\exp(\lambda \tau) <\infty.$$
Let $\psi(\lambda): = \text{ln}~\varphi(\lambda)$ defined on $\{\lambda \in \mathbb{R}; \varphi(\lambda) < \infty\}$ and the Cramer transform is defined by $H(a):=\sup_{\lambda < 0}[\lambda a - \psi(\lambda)];a< 1 = \mathbb{E}\tau$. We recall that $H(a) > 0$ for every $a< 1$.
To keep notation simple, we set $\gamma_k(t) = \lceil \epsilon^{-2}_k t\rceil $. Let $N^{k,j}(t) := \max\{n; T^{k,j}_n\le t\}; t\ge 0$ be the clock process associated with $A^{k,j}$, let $\tau_n:=\inf\{t >\tau_{n-1}; |W(t) - W(\tau_{n-1})|=1\}; n\ge 1$, $\tau_0:=0$ and $\tau=\tau_1$. Below $C$ is a positive constant which may defer from line to line. By the very definition,

\begin{eqnarray*}
\vee_{n=1}^{N^{k,j}(T)}\Delta T^{k,j}_n &=&\Big(\vee_{n=1}^{N^{k,j}(T)}\Delta T^{k,j}_n\Big)1\!\!1_{\{N^{k,j}(T) < 2\gamma_k(T)\}}\\
&+& \Big(\vee_{n=1}^{N^{k,j}(T)}\Delta T^{k,j}_n \Big)1\!\!1_{\{N^{k,j}(T) \ge 2\gamma_k(T)\}}.
\end{eqnarray*}
Hence,
\begin{eqnarray*}
\mathbb{E}\Big|\vee_{n=1}^{N^{k,j}(T)}\Delta T^{k,j}_n\Big|^q &\le& 2 C \mathbb{E}\Big|\vee_{n=1}^{2\gamma_k(T)} \Delta T^{k,j}_n\Big|^q + C T^q\mathbb{P}\{N^{k,j}(T)\ge 2 \gamma_k(T)\}\\
& &\\
&=:&I^{k}_1 + I^{k}_2.
\end{eqnarray*}

Let us fix $\alpha \in (0,1)$. Take $n=2\gamma_k(T)$ in Lemma \ref{l.estimativas} (see Appendix) and notice that
$$\left(\mathbb{E}[(\Delta T^{k,j}_n)^{q/(1-\alpha)}]\right)^{1-\alpha} = \epsilon_k^{2q}\left(\mathbb{E}[\tau^{q/(1-\alpha)}]\right)^{1-\alpha}=: C\epsilon_k^{2q},$$
where $C$ is a constant depending on $\alpha$ and $q$. Therefore, by applying Lemma \ref{l.estimativas}, we have
$$I^{k}_1\le 2^{-\alpha}C \epsilon^{2q}_k \gamma_k(T)^{(1-\alpha)}; k\ge 1.$$
By the scale invariance, we know that $\Delta T^{k,j}_n$ has the same law of $\epsilon^2_k(\tau_n-\tau_{n-1})$ for every $n\ge 1$. In this case,

$$\mathbb{P}\{N^{k,j}(T)\ge 2\gamma_k(T)\} = \mathbb{P}\Big\{T^{k,j}_{2 \gamma_k(T)} \le T\Big\} = \mathbb{P}\Big\{\epsilon^2_k \tau_{2\gamma_k(T)}\le T\Big\}.$$

In order to evaluate the second term, we notice we shall write $\tau_{2\gamma_k(T)} = \sum_{n=1}^{2\gamma_k(T)}(\tau_n - \tau_{n-1})$ as a sum of an i.i.d sequence with expectation equals to one. By writing,

$$\epsilon^2_k \tau_{2\gamma_k(T)} = \epsilon^2_k 2\gamma_k(T) \frac{\tau_{2 \gamma_k(T)}}{2\gamma_k(T)},
$$
and noticing that $2 \epsilon^2_k \gamma_k(T)  \ge 2T$,
we shall apply classical large deviation theory to get

\begin{eqnarray*}
\mathbb{P}\Big\{ \epsilon^2_k \tau_{2\gamma_k(T)} \le T \Big\} &=& \mathbb{P}\Bigg\{ \frac{\tau_{2 \gamma_k(T)}}{2 \gamma_k(T)} \le \frac{T}{\epsilon^2_k2\gamma_k(T)} \Bigg\} \le \mathbb{P}\Bigg\{ \frac{\tau_{2 \gamma_k(T)}}{2\gamma_k(T)}\le \frac{1}{2} \Bigg\}\\
&\le& \exp\big(-2\gamma_k(T) H(1/2)\big),
\end{eqnarray*}
for every $k\ge 1$. By noticing that $\vee_{n=1}^\infty\Delta T^k_n1\!\!1_{\{T^k_n\le T\}} \le \vee_{n=1}^\infty\Delta T^{k,j}_n1\!\!1_{\{T^{k,j}_n\le T\}}$ a.s for $j=1,\ldots, d$ and summing up the above estimates, we arrive at

$$\mathbb{E}|\max_{n\ge 1}\Delta T^{k}_n|^q 1\!\!1_{\{T^{k}_n \le T\}}\le C \Big(\epsilon^{2q} \gamma_k(T)^{(1-\alpha)} + \exp(-2\gamma_k(T) H(1/2))\Big)$$
for every $k\ge 1$, where $C$ is a constant which depends on $\mathbb{E}|\tau|^{q/1-\alpha}$. This concludes the proof.
\end{proof}

The multi-dimensional filtration generated by $A^k$ is naturally characterized as follows. Let $\mathbb{F}^k := \{\mathcal{F}^k_t ; 0 \leq t <\infty\}$ be the product filtration given by $\mathcal{F}^k_t := \mathcal{F}^{k,1}_t \otimes\mathcal{F}^{k,2}_t\otimes\cdots\otimes\mathcal{F}^{k,d}_t$ for $t\ge 0$. Let $\mathcal{T}:=\{T^k_m; m\ge 0\}$ be the order statistics obtained from the family of random variables $\{T^{k,j}_\ell; \ell\ge 0 ;j=1,\ldots,d\}$. That is, we set $T^k_0:=0$,

$$
T^k_1:= \inf_{1\le j\le d}\Big\{T^{k,j}_1 \Big\},\quad T^k_n:= \inf_{\substack {1\le j\le d\\ m\ge 1} } \Big\{T^{k,j}_m ; T^{k,j}_m \ge  T^k_{n-1}\Big\}
$$
for $n\ge 1$. We observe that the independence among the Brownian motions allows us to state that the elements of $\{T^k_n; n\ge 0\}$ are almost surely distinct for every $k \ge 1$.

\begin{lemma}\label{jfiltration}
For each $k\ge 1$, $(T^k_n)_{n\ge 0}$ is a sequence of $\mathbb{F}^k$-stopping times such that $T^k_n <\infty$~a.s for every $k,n\ge 1$ and $T^k_n \uparrow +\infty~a.s$ as $n\rightarrow \infty$. Moreover, the filtration $\mathbb{F}^k$ satisfies

\begin{equation}\label{FILTRjump}
\mathcal{F}^k_t \cap\{T^k_n \le t < T^k_{n+1}\} = \mathcal{F}^k_{T^k_n}\cap \{T^k_n \le  t < T^k_{n+1}\}; t\ge 0
\end{equation}
where $\mathcal{F}^k_{T^k_n} = \sigma(A^{k,j}(s\wedge T^k_n); s\ge 0, 1\le j\le d)$ for each integer $n\ge 0$.
\end{lemma}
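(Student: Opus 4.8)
The plan is to prove the statement in two stages: first the structural facts about the sequence $(T^k_n)$, then the jumping-filtration identity~(\ref{FILTRjump}).

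\emph{Stage 1 (stopping times, finiteness, divergence).} I would begin by recording that the pooled family $\{T^{k,j}_\ell; 1\le j\le d,\ \ell\ge 1\}$ is a.s.\ pairwise distinct: within a fixed coordinate this is clear because the increments $\Delta T^{k,j}_i$ are a.s.\ strictly positive, while for $i\ne j$ the variables $T^{k,i}_\ell$ and $T^{k,j}_m$ are independent and each absolutely continuous (a finite sum of i.i.d.\ copies of $\epsilon_k^2\tau$), so $\mathbb{P}(T^{k,i}_\ell = T^{k,j}_m)=0$ by conditioning. Hence the order statistics $0 = T^k_0 < T^k_1 < T^k_2 < \cdots$ are well defined. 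Each $T^{k,j}_\ell$ is an $\mathbb{F}^{k,j}$-, hence an $\mathbb{F}^k$-, stopping time (as $\mathcal{F}^{k,j}_t\subseteq\mathcal{F}^k_t$), and $\{T^k_n\le t\}$ is the countable union, over index sets $I\subseteq\{1,\ldots,d\}\times\mathbb{N}$ of cardinality $n$, of $\bigcap_{(j,\ell)\in I}\{T^{k,j}_\ell\le t\}\in\mathcal{F}^k_t$; so $T^k_n$ is an $\mathbb{F}^k$-stopping time. Finally, the strong law of large numbers applied coordinatewise (the $\Delta T^{k,j}_i$ being i.i.d., positive, with mean $\epsilon_k^2>0$) gives $T^{k,j}_\ell\to\infty$ a.s.\ for every $j$; therefore, a.s., $\{T^{k,j}_\ell; j,\ell\}\cap[0,M]$ is finite for every $M>0$, which forces simultaneously $T^k_n<\infty$ a.s.\ for all $n$ (the pooled family being infinite) and $T^k_n\uparrow+\infty$ a.s.

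\emph{Stage 2 (reduction of the jumping property).} Fix $t\ge 0$, $n\ge 1$, write $D_0:=\{T^k_n\le t<T^k_{n+1}\}$, and set $\mathcal{F}^k_{T^k_n}:=\sigma(A^{k,j}(s\wedge T^k_n); s\ge 0,\ 1\le j\le d)$ (that this coincides with the usual stopping-time $\sigma$-algebra of $T^k_n$ is a standard fact for natural filtrations of c\`adl\`ag jump processes, which I would note in passing). The inclusion $\mathcal{F}^k_{T^k_n}\cap D_0\subseteq\mathcal{F}^k_t\cap D_0$ is routine from $H\cap\{T^k_n\le t\}\in\mathcal{F}^k_t$ for $H\in\mathcal{F}^k_{T^k_n}$ together with $\{T^k_{n+1}>t\}\in\mathcal{F}^k_t$. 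For the reverse inclusion I would observe that $G\mapsto G\cap D_0$ is a $\sigma$-homomorphism of $\mathcal{F}^k_t$ onto the trace $\sigma$-algebra $\mathcal{F}^k_t\cap D_0$, so $\{G\in\mathcal{F}^k_t : G\cap D_0\in\mathcal{F}^k_{T^k_n}\cap D_0\}$ is a sub-$\sigma$-algebra of $\mathcal{F}^k_t$; since $\mathcal{F}^k_t=\mathcal{F}^{k,1}_t\otimes\cdots\otimes\mathcal{F}^{k,d}_t$ is generated by the $\pi$-system of rectangles $\bigcap_{j=1}^d G_j$ with $G_j\in\mathcal{F}^{k,j}_t$, it suffices to show $\bigl(\bigcap_j G_j\bigr)\cap D_0\in\mathcal{F}^k_{T^k_n}\cap D_0$ for such rectangles. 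To this end, using the coordinatewise jumping property recorded before the lemma I would write $G_j=\bigcup_{m\ge 0}\bigl(D_{j,m}\cap\{T^{k,j}_m\le t<T^{k,j}_{m+1}\}\bigr)$ with $D_{j,m}\in\mathcal{F}^{k,j}_{T^{k,j}_m}$, expand $\bigcap_j G_j=\bigcup_{\mathbf{m}}\bigl(\bigcap_j D_{j,m_j}\cap C_{\mathbf{m}}(t)\bigr)$ with $C_{\mathbf{m}}(t):=\bigcap_j\{T^{k,j}_{m_j}\le t<T^{k,j}_{m_j+1}\}$, and discard the configurations $\mathbf{m}$ with $\sum_j m_j\ne n$, which satisfy $C_{\mathbf{m}}(t)\cap D_0=\emptyset$.

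\emph{The surviving pieces.} For $\mathbf{m}$ with $\sum_j m_j=n$ one has $C_{\mathbf{m}}(t)\subseteq D_0$ and, on $C_{\mathbf{m}}(t)$, $T^{k,j}_{m_j}\le T^k_n=\max_i T^{k,i}_{m_i}<T^{k,j}_{m_j+1}$, so $A^{k,j}$ has no jump on $(T^{k,j}_{m_j},T^k_n]$ and hence $A^{k,j}(\cdot\wedge T^k_n)=A^{k,j}(\cdot\wedge T^{k,j}_{m_j})$ there. Writing $E_{\mathbf{m}}:=\{A^{k,j}\text{ has exactly }m_j\text{ jumps in }[0,T^k_n],\ 1\le j\le d\}\in\mathcal{F}^k_{T^k_n}$, so that $C_{\mathbf{m}}(t)=E_{\mathbf{m}}\cap D_0$ when $\sum_j m_j=n$, this shows that on $E_{\mathbf{m}}$ the data $(T^{k,j}_1,\ldots,T^{k,j}_{m_j},\Delta A^{k,j}(T^{k,j}_1),\ldots,\Delta A^{k,j}(T^{k,j}_{m_j}))_j$ generating $\bigcap_j D_{j,m_j}$ are recovered from $(A^{k,i}(\cdot\wedge T^k_n))_i$, whence $\bigcap_j D_{j,m_j}\cap E_{\mathbf{m}}\in\mathcal{F}^k_{T^k_n}$. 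Therefore $\bigl(\bigcap_j G_j\bigr)\cap D_0=\bigl(\bigsqcup_{\sum_j m_j=n}\bigcap_j D_{j,m_j}\cap E_{\mathbf{m}}\bigr)\cap D_0\in\mathcal{F}^k_{T^k_n}\cap D_0$, which closes the argument and yields~(\ref{FILTRjump}).

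The hard part will be the $\sigma$-algebra bookkeeping in Stage 2: one must make sure that restriction to $D_0$ commutes with the joins over coordinates and with the coordinatewise jumping property — exactly what the reduction to the $\pi$-system of rectangles plus the Dynkin principle achieves — and, most importantly, that on each jump-configuration piece $C_{\mathbf{m}}(t)$ the stopped multidimensional path $(A^{k,j}(\cdot\wedge T^k_n))_j$ carries precisely the same information as the coordinatewise stopped paths $(A^{k,j}(\cdot\wedge T^{k,j}_{m_j}))_j$; the identity $A^{k,j}(\cdot\wedge T^k_n)=A^{k,j}(\cdot\wedge T^{k,j}_{m_j})$ on $C_{\mathbf{m}}(t)$ is what singles out $\mathcal{F}^k_{T^k_n}$ as the correct object. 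Everything else (the stopping-time property, finiteness, divergence, and the identification of $\sigma(A^{k,j}(\cdot\wedge T^k_n);j)$ with the usual stopping-time $\sigma$-algebra) is routine.
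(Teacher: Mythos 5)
Your proposal is correct, but Stage 2 takes a genuinely different (and heavier) route than the paper. The paper's own argument for \eqref{FILTRjump} hinges on the single observation that $A^{k,j}(s\wedge t)=A^{k,j}(s\wedge T^k_n)$ for all $s,j$ on the event $\{T^k_n\le t<T^k_{n+1}\}$; since the random variables generating $\mathcal{F}^k_t=\bigotimes_j\sigma(A^{k,j}(s\wedge t);s\ge 0)$ coincide there with those generating $\sigma(A^{k,j}(s\wedge T^k_n);s\ge 0,j)$, the two trace $\sigma$-algebras coincide at once, and the paper then verifies the discrete-type structure of $\mathbb{F}^k$ and invokes Cor.\ 5.57 of \cite{he} to identify $\mathcal{F}^k_{T^k_n}$ with the generated $\sigma$-algebra $\mathcal{G}^k_n$. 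You instead run a monotone-class argument over product rectangles, feed in the coordinatewise jumping property from Th.\ 5.56 of \cite{he}, split according to the jump configuration $\mathbf{m}$ with $\sum_j m_j=n$, and reconstruct by hand that on each piece the stopped path $(A^{k,j}(\cdot\wedge T^k_n))_j$ carries exactly the data $(T^{k,j}_1,\ldots,T^{k,j}_{m_j},\Delta A^{k,j}(T^{k,j}_1),\ldots)$ — in effect re-proving the content of Cor.\ 5.57 rather than citing it. This buys a self-contained proof that makes the information content of $\mathcal{F}^k_{T^k_n}$ explicit, at the cost of combinatorial bookkeeping that the paper's global path identity on $\{T^k_n\le t<T^k_{n+1}\}$ renders unnecessary (your own local identity $A^{k,j}(\cdot\wedge T^k_n)=A^{k,j}(\cdot\wedge T^{k,j}_{m_j})$ on $C_{\mathbf{m}}(t)$ is a configuration-wise shadow of it). For Stage 1 the difference is cosmetic: the paper proves the stopping-time property by induction on $n$ with downward-closed index sets, while you write $\{T^k_n\le t\}$ directly as a countable union of intersections of coordinate events; both are valid, and your remark that $\mathbb{E}\,\Delta T^{k,j}_n=\epsilon_k^2$ is in fact the correct value (the paper's appendix misstates it as $\epsilon_k$).
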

\begin{proof}
See Section \ref{Appendixjfiltration} (Appendix) for the proof of this result.
\end{proof}

Let $\mathcal{F}^{k}_\infty$ be the completion of $\sigma(A^{k,j}(s); s\ge 0; j=1,\ldots,d)$ and let $\mathcal{N}_{k}$ be the $\sigma$-algebra generated by all $\mathbb{P}$-null sets in $\mathcal{F}^{k}_\infty$. With a slight abuse of notation, we write $\mathbb{F}^{k} = (\mathcal{F}^{k}_t)_{t\ge 0}$, where $\mathcal{F}^{k}_t$ is the usual $\mathbb{P}$-augmentation (based on $\mathcal{N}_k$) satisfying the usual conditions. From (\ref{akb}) and Lemma 2.1 in \cite{LEAO_OHASHI2013}, we do have

\begin{equation}\label{weakfiltration}
\lim_{k\rightarrow \infty}\mathbb{F}^k = \mathbb{F}.
\end{equation}
weakly (in the sense of \cite{coquet1}) over $[0,T]$. Moreover, since $\sum_{k\ge 1}\epsilon^2_k < \infty$, then we shall repeat the same arguments given in the proof of Lemma 2.2 in \cite{koshnevisan} to state that

\begin{equation}\label{uniforTk}
\lim_{k\rightarrow+\infty}\sup_{0\le t\le T}|T^{k,j}_{\lceil\epsilon^{-2}_kt\rceil} - t|=0
\end{equation}
almost surely and in $L^2(\mathbb{P})$ for each $j=1,\ldots, d$.

\begin{definition}
The structure $\mathscr{D} = \{\mathcal{T}, A^{k,j}; k\ge 1, 1\le j\le d\}$ is called a \textbf{discrete-type skeleton} for the Brownian motion.
\end{definition}
For a given choice of discrete-type skeleton $\mathscr{D}$, we will construct a differential theory based on functionals written on $\mathscr{D}$. Before we proceed, it is important to point out that there exists a pathwise description of the dynamics generated by a discrete-type skeleton.
\subsection{Pathwise dynamics of the skeleton}\label{pathsection}
Let us define

$$\mathbb{I}_k:=\Big\{ (i^k_1, \ldots, i^k_d); i^k_\ell\in \{-1,0,1\}~\forall \ell \in \{1,\ldots, d\}~\text{and}~\sum_{j=1}^d|i^k_j|=1   \Big\}$$
and $\mathbb{S}_k:=(0,+\infty)\times \mathbb{I}_k$. Let us define $\aleph: \mathbb{I}_k\rightarrow \{1,2,\dots,d\}\times\{-1,1\}$ by

\begin{equation}\label{alephamap}
\aleph(\tilde{i}^{k}):=\big(\aleph_1(\tilde{i}^{k}),\aleph_2(\tilde{i}^{k})\big):=(j,r),
\end{equation}
where $j\in\{1,\dots,d\}$ is the coordinate of $\tilde{i}^k\in \mathbb{I}_k$ which is different from zero and $r\in\{-1,1\}$ is the sign of $\tilde{i}^k$ in the coordinate $j$.

The $n$-fold Cartesian product of $\mathbb{S}_k$ is denoted by $\mathbb{S}_k^n$ and a generic element of $\mathbb{S}^n_k$ will be denoted by $\mathbf{b}^k_n := (s^k_1,\tilde{i}^k_1, \ldots, s^k_n, \tilde{i}^k_n)\in \mathbb{S}^n_k$ where $(s^k_r,\tilde{i}^k_r)\in (0,+\infty)\times \mathbb{I}_k$ for $1\le r\le n$. Let us define $\eta^k_n:=(\eta^{k,1}_n, \ldots, \eta^{k,d}_n)$, where

$$
\eta^{k,j}_n:=\left\{
\begin{array}{rl}
1; & \hbox{if} \  \Delta A^{k,j} (T^k_n)>0 \\
-1;& \hbox{if} \  \Delta A^{k,j} (T^k_n)< 0 \\
0;& \hbox{if} \ \Delta A^{k,j} (T^k_n)=0.
\end{array}
\right.
$$
Let us define
$$\mathcal{A}^k_n:= \Big(\Delta T^k_1, \eta^k_1, \ldots, \Delta T^k_n, \eta^k_n\Big)\in \mathbb{S}^n_k~a.s$$
One should notice that $$\mathcal{F}^k_{T^k_n} = (\mathcal{A}^k_n)^{-1}(\mathcal{B}(\mathbb{S}^n_k)),$$
up to null sets in $\mathcal{F}^k_\infty$, where $\mathcal{B}(\mathbb{S}^k_n)$ is the Borel sigma algebra generated by $\mathbb{S}^n_k; n\ge 1$.

The law of the system will evolve according to the following probability measure defined by

$$\mathbb{P}^k_r(E):=\mathbb{P}\{\mathcal{A}^k_r\in E\}; E\in \mathcal{B}(\mathbb{S}^r_k),$$
for $k,r\ge 1$. By the very definition,

$$\mathbb{P}^k_{n}(\cdot) = \mathbb{P}^k_{r}(\cdot\times \mathbb{S}^{r-n}_k)$$
for any $r> n\ge 1$.
By construction, $\mathbb{P}^k_{r}(\mathbb{S}^{n}_k\times \cdot)$ is a regular measure and $\mathcal{B}(\mathbb{S}_k)$ is countably generated, then it is known (see e.g III. 70-73 in~\cite{dellacherie2}) there exists ($\mathbb{P}^k_{n}$-a.s unique) a disintegration $\nu^k_{n,r}: \mathcal{B}(\mathbb{S}^{r-n}_k)\times\mathbb{S}^{n}_k\rightarrow[0,1]$ which realizes

$$\mathbb{P}^k_{r}(D) = \int_{\mathbb{S}^{n}_k}\int_{\mathbb{S}^{r-n}_k} 1\!\!1_{D}(\textbf{b}^k_{n},q^k_{n,r})\nu^k_{n,r} (dq^k_{n,r}|\textbf{b}^k_{n})\mathbb{P}^k_{n}(d\textbf{b}^k_{n})$$
for every $D\in \mathcal{B}(\mathbb{S}^{r}_k)$, where $q^k_{n,r}$ is the projection of $\textbf{b}^k_r = (s^k_1,\tilde{i}^k_1, \ldots, s^k_r,\tilde{i}^k_r)\in \mathbb{S}^r_k$ onto the last $(r-n)$ components, i.e., $q^k_{n,r} = (s^k_{n+1},\tilde{i}^k_{n+1}, \ldots,s^k_{r},\tilde{i}^k_{r} )$. If $r=n+1$, we denote $\nu^k_{n+1}:=\nu^k_{n,n+1}$. By the very definition, for each $E\in \mathcal{B}(\mathbb{S}_k)$ and $\mathbf{b}^k_{n}\in \mathbb{S}_k^{n}$, we have

$$
\nu^k_{n+1}(E|\mathbf{b}^k_{n})= \mathbb{P}\Big\{(\Delta T^k_{n+1}, \eta^k_{n+1})\in E|\mathcal{A}^k_{n} = \mathbf{b}^k_{n}\Big\}; n\ge 1.
$$
\section{Abstract Differential Skeleton}\label{DIFFSTRUC}
In this section, we present a differential structure imbedded into the Brownian motion state variable based on a discrete-type structure $\mathscr{D}$.

\begin{definition}
A \textbf{Wiener functional} is an $\mathbb{F}$-adapted continuous process which belongs to $\mathbf{B}^2(\mathbb{F})$.
\end{definition}

\begin{definition}\label{GASdef}
We say that a pure jump $\mathbb{F}^k$-adapted process of the form

$$
X^k(t) = \sum_{n=0}^\infty X^{k}(T^k_n)1\!\!1_{\{T^k_n\le t < T^k_{n+1}\}}; 0\le t\le T,
$$
is a \textbf{good approximating sequence} (henceforth abbreviated by GAS) w.r.t $X$ if $\mathbb{E}[X^k, X^k|(T)<\infty$ for every $k\ge 1$ and

$$\lim_{k\rightarrow+\infty}X^k=X \quad \text{weakly in}~\mathbf{B}^2(\mathbb{F}).$$
\end{definition}
\begin{definition}\label{IDS}
An \textbf{imbedded discrete structure} $\mathcal{Y} = \big( (X^k)_{k\ge 1}, \mathscr{D}\big)$ for a Wiener functional $X$ consists of the following elements:
\begin{itemize}
  \item A discrete-type skeleton $\mathscr{D}=\{\mathcal{T}, A^{k,j}; k\ge 1, 1\le j\le d\}$ for the Brownian state $B$.
  \item A GAS $\{X^k; k\ge 1\}$ w.r.t $X$ associated with the above discrete-type skeleton.
\end{itemize}
\end{definition}

Next, we show there exists a canonical way to attach an imbedded discrete structure to an arbitrary Wiener functional $X$.

\

\noindent \textbf{Canonical imbedded discrete structure:} One typical example of an imbedded discrete structure $\mathcal{Y} = \big( (\delta^kX)_{k\ge 1}, \mathscr{D}\big)$ for a Wiener functional $X$ is given by

\begin{equation}\label{canonicalST}
\delta^kX(t) :=\sum_{n=0}^\infty \mathbb{E}\big[X(T^k_n)|\mathcal{F}^k_{T^k_n}\big]\mathds{1}_{\{T^k_n\le t < T^k_{n+1}\}}; ~0\le t\le T.
\end{equation}

\begin{lemma}\label{deltaconvergence}
If $X$ is a Wiener functional, then $\mathcal{Y} = \big( (\delta^kX)_{k\ge 1}, \mathscr{D}\big)$ is an imbedded discrete structure for $X$.
\end{lemma}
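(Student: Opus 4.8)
The goal is to verify that $\mathcal{Y} = \big((\delta^k X)_{k\ge 1}, \mathscr{D}\big)$ satisfies Definition \ref{IDS}, which amounts to checking that $\delta^k X$ is a GAS for $X$ in the sense of Definition \ref{GASdef}. Since $\mathscr{D}$ is a fixed discrete-type skeleton, the only work is to establish the two properties required of a GAS: that $\mathbb{E}[\delta^k X, \delta^k X](T) < \infty$ for every $k \ge 1$, and that $\delta^k X \to X$ weakly in $\mathbf{B}^2(\mathbb{F})$. The plan is to treat these two points in turn.

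For the integrability of the bracket, I would first observe that $\delta^k X$ is, by construction, a pure-jump $\mathbb{F}^k$-adapted process with jumps only at the times $T^k_n$, and that the value at $T^k_n$ is the conditional expectation $\mathbb{E}[X(T^k_n)\,|\,\mathcal{F}^k_{T^k_n}]$. Its quadratic variation on $[0,T]$ is the finite sum $\sum_{n: T^k_n \le T} \big(\Delta \delta^k X(T^k_n)\big)^2$. Using the elementary bound $(a-b)^2 \le 2a^2 + 2b^2$ together with the conditional Jensen inequality $|\mathbb{E}[X(T^k_n)\,|\,\mathcal{F}^k_{T^k_n}]|^2 \le \mathbb{E}[X(T^k_n)^2\,|\,\mathcal{F}^k_{T^k_n}]$, one controls each jump by (conditional) second moments of $X$ at the skeleton times; since $X \in \mathbf{B}^2(\mathbb{F})$ we have $\mathbb{E}\sup_{0\le t\le T}|X(t)|^2 < \infty$. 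The only subtlety is that the number of jumps in $[0,T]$ is random, but by Lemma \ref{jfiltration} we have $T^k_n \uparrow \infty$ a.s., and more quantitatively the clock process $N^k(T)$ is comparable to $\epsilon_k^{-2}T$; combining the uniform bound $\sup_t |\delta^k X(t)| \le$ (a quantity controlled by $\sup_t |X(t)|$ via the conditional expectations and the tower property) with optional stopping / Doob's inequality applied to the $\mathbb{F}^k$-martingale part, one gets $\mathbb{E}[\delta^k X, \delta^k X](T) \le C_k \,\mathbb{E}\sup_{0\le t\le T}|X(t)|^2 < \infty$ for a constant $C_k$ depending on $k$. It is worth noting that $\delta^k X$ need not itself be a martingale (that is the role played by the later operators), so the bracket here is just the pathwise quadratic variation of a finite-activity pure-jump process.

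For the weak convergence $\delta^k X \to X$ in $\mathbf{B}^2(\mathbb{F})$ with the $\sigma(\mathbf{B}^2, \mathrm{M}^2)$ topology, the natural route is: (a) first show strong $L^2$ convergence at a fixed time, or along the skeleton times, i.e. that $\delta^k X(t) \to X(t)$ in $L^2(\mathbb{P})$ for each $t$, or more robustly that $\mathbb{E}\int_0^T |\delta^k X(s) - X(s)|^2 ds \to 0$; and (b) upgrade this to weak convergence in $\mathbf{B}^2$ by testing against the dual space $\mathrm{M}^2(\mathbb{F})$. Step (a) rests on two facts already in the excerpt: the weak convergence of filtrations $\mathbb{F}^k \to \mathbb{F}$ from (\ref{weakfiltration}), which by the Coquet–Mémin–Słomiński machinery gives $\mathbb{E}[X(t)\,|\,\mathcal{F}^k_t] \to \mathbb{E}[X(t)\,|\,\mathcal{F}_t] = X(t)$ in $L^2$ (using continuity/adaptedness of $X$); and the uniform closeness of the skeleton clock times to the identity, $\sup_{0\le t\le T}|T^k_{\lceil \epsilon_k^{-2}t\rceil} - t| \to 0$ from (\ref{uniforTk}), together with path-continuity of $X$, which ensures that replacing the conditioning $\sigma$-field $\mathcal{F}^k_t$ by $\mathcal{F}^k_{T^k_n}$ (the "freezing at the last skeleton time" inherent in the definition (\ref{canonicalST})) and replacing $X(t)$ by $X(T^k_n)$ costs only a term going to zero in $L^2$. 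Here one uses that $\mathbb{F}^k$ is a jumping filtration with $\mathcal{F}^k_t \cap \{T^k_n \le t < T^k_{n+1}\} = \mathcal{F}^k_{T^k_n} \cap \{T^k_n \le t < T^k_{n+1}\}$ from Lemma \ref{jfiltration}, so that on that event $\mathbb{E}[X(T^k_n)\,|\,\mathcal{F}^k_{T^k_n}] = \mathbb{E}[X(T^k_n)\,|\,\mathcal{F}^k_t]$, reducing everything to a genuine conditional-expectation approximation. The hard part — and the step I would spend the most care on — is exactly this combination of the filtration-convergence estimate with the time-discretization estimate, done uniformly enough to pass from pointwise-in-$t$ $L^2$ convergence to convergence in the $\mathbf{B}^2$ norm sense against $\mathrm{M}^2$-test elements; one cannot expect strong $\mathbf{B}^2$ convergence (the $\sup_t$ is too strong for conditional-expectation approximations of a rough $V$), which is precisely why the statement is phrased with the weak topology, and the duality bound $|(V,X)| \le \|V\|_{\mathrm{M}^2}\|X\|_{\mathbf{B}^2}$ together with the explicit form of the pairing $(V, \delta^k X - X) = \mathbb{E}\int_0^T (\delta^k X - X)(s-)\,dV^{pr}(s) + \mathbb{E}\int_0^T (\delta^k X - X)(s)\,dV^{pd}(s)$ lets one conclude via dominated convergence once (a) is in hand, using $\sup_k \|\delta^k X\|_{\mathbf{B}^2} < \infty$ (a consequence of Doob's inequality applied to the conditional expectations) for the domination.

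Finally, I would assemble the pieces: integrability of the bracket gives the first condition of Definition \ref{GASdef}; the weak convergence gives the second; hence $(\delta^k X)_{k\ge 1}$ is a GAS w.r.t. $X$, and paired with the skeleton $\mathscr{D}$ it forms an imbedded discrete structure for $X$, which is the assertion of Lemma \ref{deltaconvergence}. I would also remark that the uniform $\mathbf{B}^2$-boundedness $\sup_{k} \mathbb{E}\sup_{0\le t\le T}|\delta^k X(t)|^2 < \infty$ is itself worth stating explicitly inside the proof, since it is used both for domination in the weak-convergence argument and is the kind of estimate invoked repeatedly later in the paper.
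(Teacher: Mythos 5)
Your proposal is correct in substance and its first half follows essentially the same route as the paper: the paper also splits $\delta^kX - X$ through the auxiliary skeleton process $\sum_n X(T^k_n)\mathds{1}_{[[T^k_n,T^k_{n+1}[[}$, kills the conditional-expectation error $\sup_n|\mathbb{E}[X(T^k_n)|\mathcal{F}^k_{T^k_n}]-X(T^k_n)|\mathds{1}_{\{T^k_n\le T\}}$ by the Coquet--M\'emin--S\l{}omi\'nski filtration-convergence theorem (using $\mathbb{F}^k\subset\mathbb{F}$), kills the time-discretization error by Lemma \ref{meshlemma} plus path continuity, and gets the uniform bound $\sup_k\mathbb{E}\sup_{t\le T}|\delta^kX(t)-X(t)|^2<\infty$ from Doob applied to $\mathbb{E}[\sup_{t\le T}|X(t)|\,|\,\mathcal{F}^k_{T^k_n}]$. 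Where you genuinely diverge is the last step: the paper deduces strong $\mathbf{B}^1(\mathbb{F})$ convergence, then invokes weak relative compactness of $\{\delta^kX\}$ in $L^2(\Omega;E)$ (Diestel--Ruess--Schachermayer) and identifies all weak limit points with $X$, whereas you test directly against $\mathrm{M}^2(\mathbb{F})$ and pass to the limit in the pairing by uniform integrability. Your route is more elementary (no vector-valued weak-compactness theorem), but as written it is slightly under-powered: pointwise-in-$t$ $L^2$ convergence is awkward to integrate against the random measures $dV^{pr}$, $dV^{pd}$; the clean way is to use exactly the uniform-in-$t$ convergence in probability your decomposition already gives, bound $|(V,\delta^kX-X)|\le \sup_{t\le T}|\delta^kX(t)-X(t)|\,(\mathrm{Var}(V^{pr})+\mathrm{Var}(V^{pd}))$, and note that this product is uniformly integrable because the first factor is bounded in $L^2$ uniformly in $k$ and the variations are a fixed element of $L^2$.

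One genuine soft spot: your argument for $\mathbb{E}[\delta^kX,\delta^kX](T)<\infty$ does not go through as sketched. Bounding the bracket by $\sup_t|\delta^kX(t)|^2$ times the number of jumps and then saying the clock $N^k(T)$ is ``comparable to $\epsilon_k^{-2}T$'' silently multiplies expectations of dependent random variables; with only $X\in\mathbf{B}^2$ a Cauchy--Schwarz repair would demand fourth moments of $X$, which you do not have, so the claimed bound $C_k\,\mathbb{E}\sup_t|X(t)|^2$ is not justified. To be fair, the paper's own proof of this lemma does not address the bracket condition of Definition \ref{GASdef} at all (it only proves the weak $\mathbf{B}^2$ convergence; the quantitative energy estimate appears later, in Proposition \ref{Dirichletenergy}, under the Dirichlet hypothesis), so this is less a divergence from the paper than an incomplete extra step; but if you include it you should either truncate $X$ and control the remainder, or argue through a martingale-difference decomposition of the jumps, rather than through the jump count.
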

\begin{proof}
Let us denote $X^k:=\sum_{n=0}^\infty X(T^k_n) 1\!\!1_{[[T^k_n , T^k_{n+1}[[}$. Triangle inequality yields

\begin{eqnarray}
\nonumber|\delta^kX(t) - X(t)|&\le& |\delta^kX(t) - X^k(t)| + |X^k(t) - X(t)|\\
\nonumber&\le& \sup_{n\ge 1}\big|\mathbb{E}[X(T^k_n)|\mathcal{F}^k_{T^k_n}] - X(T^k_n)| \big| 1\!\!1_{\{T^k_n\le T\}}\\
\label{deltain}&+& \sup_{0\le t\le T}|X^k(t) - X(t)|\\
\nonumber&=:&J^k_1 + J^k_2; 0\le t \le T.
\end{eqnarray}
By construction $\mathbb{F}^k\subset \mathbb{F}; k\ge 1$ so that we may apply Th. 1 in \cite{coquet1} to safely state that $\lim_{k\rightarrow\infty} J^k_1=0$ in probability. By the very definition, for a given $\epsilon>0$

$$\{\sup_{0\le t\le T}|X^k(t) - X(t)|> \epsilon\} = \{\sup_{0\le t\le T}\max_{n\ge 1}|X^k(T^k_n) - X(t)| 1\!\!1_{\{T^k_n\le t < T^k_{n+1}\}}> \epsilon\}.$$
Lemma \ref{meshlemma} and the path continuity of $X$ allow us to conclude $\lim_{k\rightarrow\infty} J^k_2=0$ in probability. From (\ref{deltain}), we shall apply Doob's maximal inequality on the discrete-time martingale $\mathbb{E}[\sup_{0\le t\le T}|X(t)||\mathcal{F}^k_{T^k_n}]; n\ge 1$ to get
$$\sup_{k\ge 1}\mathbb{E}\sup_{0\le t\le T}|\delta^kX(t) - X(t)|^2\le C\mathbb{E}\sup_{0\le t\le T}|X(t)|^2< \infty$$
and hence $\lim_{k\rightarrow \infty}\delta^kX = X$ strongly in $\textbf{B}^1(\mathbb{F})$. Lastly, $\{\delta^kX;k\ge 1\}$ is bounded in $L^2(\Omega; E)$ where $E$ is the Banach space of c\`adl\`ag functions from $[0,T]$ to $\mathbb{R}$ equipped with the sup norm. Since $L^2(\Omega;\mathbb{R})$ is reflexive, then we shall apply Th 2.1 and Corollary 3.3 in \cite{diestel} to state that $\{\delta^kX; k\ge 1\}\subset \textbf{B}^2 \subset L^2(\Omega; E)$ is weakly-relatively compact w.r.t $L^2(\Omega; E)$-topology. Since $\textbf{B}^2\subset L^2(\Omega; E)$ ($\textbf{B}^2$ been closed) and $\lim_{k\rightarrow \infty}\delta^kX = X$ stronlgy in $\textbf{B}^1$, then all $\textbf{B}^2$-weak limit points of $\{\delta^kX; k\ge 1\}$ are equal and hence we do have weak convergence in $\textbf{B}^2$.
\end{proof}

\

\noindent \textbf{Functional imbedded discrete structures:}  Another example of an imbedded discrete structure can be constructed starting with a fixed non-anticipative functional representation. In the sequel, we make use of the following notation

$$\omega_t: = \omega(t\wedge \cdot); \omega \in D([0,T];\mathbb{R}^d).$$

This notation is naturally extended to processes. We say that $F$ is a \textit{non-anticipative} functional if it is a Borel mapping and

$$F_t(\omega) = F_t(\omega_t); (t,\omega)\in[0,T]\times D([0,T];\mathbb{R}^d).$$
We recall the set $\Lambda=\{(t,\omega_t); t\in [0,T]; \omega\in D([0,T];\mathbb{R}^d)\}$. Let us endow $\Lambda$ with the metric

$$\textbf{d}((t,\omega); (t',\omega')): = \sup_{0\le u\le T}\|\omega(u\wedge t) - \omega'(u\wedge t')\|_{\mathbb{R}^d} + |t-t'|.$$
Let $X$ be a Wiener functional and let $\mathcal{Y}= \big((X^k)_{k\ge 1},\mathscr{D}\big)$ be an imbedded structure for $X$. By Doob-Dynkin lemma, there exists a functional $\hat{F}$ defined on $\hat{\Lambda} = \{(t,\omega_t); t\in [0,T]; \omega\in C([0,T];\mathbb{R}^d)\}$ such that

\begin{equation}\label{funcrep}
\hat{F}_t(B_t)=  X(t),~0\le t\le T.
\end{equation}
When we write $X=F(B)$ for a given non-anticipative functional $F$ defined on $\Lambda$ it is implicitly assumed that we are fixing a functional $F$ which is consistent to $\hat{F}$ in the sense that
$F_t(x_t) = \hat{F}_t(x_t)$ for every $x\in C([0,T];\mathbb{R}).$ Let $X$ be a Wiener functional

$$
 X(t)= F_t(B_t);~0\le t\le T,
$$
where a $F$ is a non-anticipative functional $F$ defined on $\Lambda$. Then, we shall define the following structure $\mathcal{F}:=\big((\textbf{F}^k)_{k\ge 1}, \mathscr{D}\big)$,

\begin{equation}\label{fraaa}
\textbf{F}^k(t):=\sum_{\ell=0}^\infty F_{T^k_\ell}(A^k_{T^k_\ell})1\!\!1_{\{T^{k}_{\ell}\le t < T^{k}_{\ell + 1}\}},~0\le t\le T.
\end{equation}
The reader should not confuse $F(A^k)$ with $\textbf{F}^k$ because $\{\textbf{F}^k(t);0\le t\le T\}$ is a pure jump process while $\{F_t(A^k_t);0\le t\le T\}$ does not necessarily has this property. Under continuity assumptions in the sense of pathwise functional calculus (see \cite{dupire,cont}), one can easily check that $\lim_{k\rightarrow \infty}\textbf{F}^k=F(B)$ weakly in $\mathbf{B}^2(\mathbb{F})$ so that $\mathcal{F}=\big((\textbf{F}^k)_{k\ge 1}, \mathscr{D}\big)$ is an imbedded discrete structure for the Wiener functional $F(B)$.

Concrete examples of imbedded discrete structures arise in many contexts:

\begin{itemize}
  \item Discretization of value processes arising from path-dependent optimal stochastic control problems. See the works \cite{bezerra,LEAO_OHASHI2017.1,LEAO_OHASHI2017.2}.
  \item Euler-Maruyama schemes arising from path-dependent stochastic differential equations driven by Gaussian noises. See the works \cite{LEAO_OHASHI2017.1,LEAO_OHASHI2017.2}.
  \item Functional imbedded discrete structures associated with path-dependent functionals under $(p,q)$-variation regularity. See \cite{LOS1}.
\end{itemize}
In principle, one can always construct an imbedded discrete structure to a Wiener functional $X$ by only observing the basic probabilistic structure of $X$. The use of functional structures $\mathcal{F}$ is only indicated when one has some a priori information on a functional which realizes $X=F(B)$. In general, this is not the case and other types of imbedded structures must be considered. See \cite{LEAO_OHASHI2017.1,LEAO_OHASHI2017.2} for concrete examples of applications to control theory.

\

\noindent \textbf{Differential operators on imbedded discrete structures}. The reader should really think an imbedded discrete structure as a model simplification for a given Wiener functional $X$ where we are able to compute freely the sensitivities of $X$ w.r.t the Brownian state, i.e., without any regularity assumptions. In the sequel, we provide a detailed explanation on this point. For a given Wiener functional $X$, let us choose an imbedded discrete structure $\mathcal{Y} = \big( (X^k)_{k\ge 1}, \mathscr{D}\big)$ associated with $X$.

In the sequel, $\big(Y\big)^{p,k}$ denotes the $\mathbb{F}^k$-dual predictable projection of an $\mathbb{F}^k$-adapted process $Y$ with locally integrable variation (see Chap.5 in \cite{he}). At first, we observe that the $\mathbb{F}^k$-dual predictable projection $\big(X^k - X^k(0)\big)^{p,k}$ of the process $X^k - X^k(0)$ is well-defined and it is the unique $\mathbb{F}^k$-predictable bounded variation process such that

$$
X^k -X^k(0) - \big(X^k-X^k(0)\big)^{p,k} ~\text{is an}~\mathbb{F}^k-\text{local martingale}.
$$
Then, one can write

\begin{equation}\label{deltaXdef}
X^{k} (t) = X^k(0) + \sum_{0< s\le t}\Delta X^k(s)  = X^k(0) + \sum_{j=1}^d \int_{0}^t \mathcal{D}^{\mathcal{Y},k,j}X(u) dA^{k,j} (u)
\end{equation}
where

$$
\mathcal{D}^{\mathcal{Y},k,j}X(u) :=  \sum_{\ell=1}^{\infty} \frac{\Delta X^{k} (T^{k,j}_\ell)}{\Delta A^{k,j}(T^{k,j}_\ell)} 1\!\!1_{\{T^{k,j}_\ell=u\}}; 0\le u\le T,\quad \mathcal{Y} = \big((X^k)_{k\ge 1},\mathscr{D}\big),
$$
and the integral in \eqref{deltaXdef} is interpreted in the Lebesgue-Stieltjes sense. In the sequel, $\mu_{[A^{k,j}]}$ is the Dol\'eans measure (see e.g Chap.5 in \cite{he}) generated by the point process $[A^{k,j},A^{k,j}]; 1\le j\le d, k\ge 1$, i.e.,

$$\mu_{[A^{k,j}]}(H):=\mathbb{E}\int_0^T\mathds{1}_{H}(s)d[A^{k,j},A^{k,j}](s); H\in \mathcal{F}^k_T\times \mathcal{B}([0,T]).$$
From the square integrability of the martingale $A^{k,j}$, $\mu_{[A^{k,j}]}$ is a finite measure. In the sequel, $\mathcal{P}^k$ is the $\mathbb{F}^k$-predictable sigma-algebra of $[0,T]\times \Omega$. We observe that

$$\int_0^\cdot\frac{\mathcal{D}^{\mathcal{Y},k,j}X}{\Delta A^{k,j}}d [A^{k,j},A^{k,j}]$$
is a process with locally integrable variation. Then, there exists a unique (up to sets of $\mu_{[A^{k,j}]}$-measure zero) $\mathbb{F}^k$-predictable process $\mathbb{E}_{\mu_{[A^{k,j}]}}\Big[\frac{\mathcal{D}^{\mathcal{Y},k,j}X}{\Delta A^{k,j}}\big|\mathcal{P}^k\Big]$ such that

$$\Big(\int_0^\cdot\frac{\mathcal{D}^{\mathcal{Y},k,j}X}{\Delta A^{k,j}}d [A^{k,j},A^{k,j}]\Big)^{p,k} = \int_0^\cdot \mathbb{E}_{\mu_{[A^{k,j}]}}\Big[\frac{\mathcal{D}^{\mathcal{Y},k,j}X}{\Delta A^{k,j}}\big|\mathcal{P}^k\Big] d\langle A^{k,j},A^{k,j}\rangle.$$
See e.g Th 5.25 and remark in \cite{he}. We then denote

\begin{equation}\label{weakinfG}
U^{\mathcal{Y},k,j}X(u):=\mathbb{E}_{\mu_{[A^{k,j}]}}\Bigg[\frac{\mathcal{D}^{\mathcal{Y},k,j}X}{\Delta A^{k,j}}\Big|\mathcal{P}^k\Bigg](u);~ 0\le u\le T, k\ge 1, 1\le j\le d,
\end{equation}
where it is understood that the stochastic process $\mathcal{D}^{\mathcal{Y},k,j}X/\Delta A^{k,j}$ is null on the complement of the union of stochastic intervals $\cup_{n=1}^\infty [[T^{k,j}_n,T^{k,j}_n]]$. We call

\begin{equation}\label{Fkweakgen}
\sum_{j=1}^d U^{\mathcal{Y},k,j}X
\end{equation}
as the $\mathbb{F}^k$-\textit{weak infinitesimal generator} of the imbedded discrete structure $\mathcal{Y}$ w.r.t $X$. We then denote $\mathcal{D}^{\mathcal{Y},k}X = (\mathcal{D}^{\mathcal{Y},k,1}X, \ldots,\mathcal{D}^{\mathcal{Y},k,d}X)$ and $U^{\mathcal{Y},k}X = (U^{\mathcal{Y},k,1}X, \ldots, U^{\mathcal{Y},k,d}X)$.
\begin{lemma}\label{explicit}
Let $\mathcal{Y} = \big((X^k)_{k\ge 1}, \mathscr{D}\big)$ be an imbedded discrete structure w.r.t $X$. The $\mathbb{F}^k$-dual predictable projection of $X^{k} - X^k(0)$ is given by the absolutely continuous process

$$
\sum_{j =1}^d \int_0^tU^{\mathcal{Y},k,j} X (s) d\langle A^{k,j}, A^{k,j} \rangle (s),\quad 0\le t \le T.
$$
Moreover,

\begin{equation}\label{ger1}
\sum_{j=1}^d U^{\mathcal{Y},k,j}X(T^k_{n+1}) = \mathbb{E}\Bigg[\frac{\Delta X^k(T^k_{n+1})}{\epsilon_k^2}\Big|\mathcal{F}^k_{T^k_{n+1}-}\Bigg]~a.s,
\end{equation}
for each $n\ge 0$ and $k\ge 1$.
\end{lemma}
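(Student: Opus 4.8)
The plan is to establish the two assertions in sequence, starting from the decomposition \eqref{deltaXdef} and the definition \eqref{weakinfG} of $U^{\mathcal{Y},k,j}X$. For the first assertion, I would argue that $X^k - X^k(0)$ has locally integrable variation (indeed integrable variation on $[0,T]$, since $\mathbb{E}[X^k,X^k](T) < \infty$ together with the fact that each jump $\Delta X^k(T^{k,j}_\ell)$ occurs at a time where $|\Delta A^{k,j}| = \epsilon_k$, so the total variation is controlled by $\epsilon_k^{-1}[X^k,X^k](T)$ up to combinatorial bookkeeping over the $d$ coordinates), hence its $\mathbb{F}^k$-dual predictable projection is well-defined. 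Writing $X^k - X^k(0) = \sum_{j=1}^d \int_0^\cdot \mathcal{D}^{\mathcal{Y},k,j}X\, dA^{k,j}$ from \eqref{deltaXdef} and noting that $dA^{k,j} = \sigma^{k,j}_n \epsilon_k$ precisely on $[[T^{k,j}_n,T^{k,j}_n]]$ while $d[A^{k,j},A^{k,j}] = \epsilon_k^2$ there, one rewrites $\int_0^\cdot \mathcal{D}^{\mathcal{Y},k,j}X\, dA^{k,j} = \int_0^\cdot \frac{\mathcal{D}^{\mathcal{Y},k,j}X}{\Delta A^{k,j}}\, d[A^{k,j},A^{k,j}]$, because $\Delta A^{k,j}(T^{k,j}_n) = \sigma^{k,j}_n\epsilon_k$ and $d[A^{k,j},A^{k,j}]$ charges exactly the same atoms with mass $(\Delta A^{k,j})^2$. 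Then I apply the dual-predictable-projection identity recorded just before the lemma (Th. 5.25 in \cite{he}), namely
$$
\Big(\int_0^\cdot\tfrac{\mathcal{D}^{\mathcal{Y},k,j}X}{\Delta A^{k,j}}d [A^{k,j},A^{k,j}]\Big)^{p,k} = \int_0^\cdot U^{\mathcal{Y},k,j}X\, d\langle A^{k,j},A^{k,j}\rangle,
$$
and sum over $j$, using linearity of the dual predictable projection, to obtain the claimed absolutely continuous form (absolute continuity being inherited from $\langle A^{k,j},A^{k,j}\rangle(t) = \epsilon_k^2\int_0^t h^{k,j}(s)\,ds$ via Lemma \ref{angleLemma}).

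For the second assertion \eqref{ger1}, the plan is to evaluate both expressions at the stopping time $T^k_{n+1}$ and identify them via the structure of the jumping filtration. Since the $T^k_m$ are a.s. distinct (as noted after the definition of $\mathcal{T}$), at time $T^k_{n+1}$ exactly one coordinate $j$ jumps, with $\Delta A^{k,j}(T^k_{n+1}) = \sigma^{k,j}\epsilon_k$ and $\langle A^{k,j},A^{k,j}\rangle$ having density $\epsilon_k^2 h^{k,j}$; the jump of the dual predictable projection at a predictable-type increment over $[[T^k_n, T^k_{n+1}]]$ can be computed from the general formula for dual predictable projections on a jumping filtration (Th. 5.56 in \cite{he} combined with the structure \eqref{FILTRjump}), which gives the jump of $\big(X^k - X^k(0)\big)^{p,k}$ at $T^k_{n+1}$ as the $\mathcal{F}^k_{T^k_{n+1}-}$-conditional expectation of $\Delta X^k(T^k_{n+1})$. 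On the other hand, the left side $\sum_j U^{\mathcal{Y},k,j}X(T^k_{n+1})\,\Delta\langle A^{k,j},A^{k,j}\rangle(T^k_{n+1})$ is exactly that same jump by the first part of the lemma; so it remains to divide through by $\Delta\langle A^{k,\cdot},A^{k,\cdot}\rangle(T^k_{n+1})$ and check this quantity equals $\epsilon_k^2$ (not merely $\epsilon_k^2 h^{k,j}$ times a Lebesgue increment, which vanishes) — here one must be careful: the angle bracket is \emph{continuous}, so $\Delta\langle A^{k,j},A^{k,j}\rangle(T^k_{n+1}) = 0$ pointwise, and the correct reading of \eqref{ger1} is as an identity of the \emph{predictable densities} evaluated along the sequence $(T^k_{n+1})$, i.e. one uses that the $\mathbb{F}^k$-predictable process whose integral against $d\langle A^{k,j},A^{k,j}\rangle$ reproduces the dual projection is, by the discrete-skeleton structure, the elementary predictable process taking the value $\mathbb{E}[\Delta X^k(T^k_{n+1})/\epsilon_k^2 \mid \mathcal{F}^k_{T^k_{n+1}-}]$ on $]]T^k_n, T^k_{n+1}]]$. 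I would make this precise by comparing, for each elementary $\mathbb{F}^k$-predictable test process $1\!\!1_{]]T^k_n,T^k_{n+1}]]}$, the two integrals $\mathbb{E}\int_0^T 1\!\!1_{]]T^k_n,T^k_{n+1}]]}\,d(X^k - X^k(0))$ and $\mathbb{E}\int_0^T 1\!\!1_{]]T^k_n,T^k_{n+1}]]}\sum_j U^{\mathcal{Y},k,j}X\, d\langle A^{k,j},A^{k,j}\rangle$, and using the defining property of the dual predictable projection to equate them; monotone-class / uniqueness of predictable densities then yields \eqref{ger1}.

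The main obstacle I anticipate is the bookkeeping in the second part: reconciling the fact that $\langle A^{k,j},A^{k,j}\rangle$ is continuous (so has no jumps) with the "$\Delta X^k(T^k_{n+1})/\epsilon_k^2$" on the right-hand side of \eqref{ger1}. The resolution hinges on the identity $\int_{T^k_n}^{T^k_{n+1}} h^{k,j}(s)\,ds \cdot \epsilon_k^2$ being, in $\mathcal{F}^k_{T^k_{n+1}-}$-conditional expectation, equal to $\epsilon_k^2 \cdot \mathbb{E}[\,\cdot\,]$ in the right normalization — more precisely, one uses that $h^{k,j}(s) = f_k(s-T^{k,j}_{N}) / (1 - F_k(s-T^{k,j}_N))$ is the hazard rate, so that $\mathbb{E}[\,\int_{T^k_n}^{T^k_{n+1}} \epsilon_k^2 h^{k,j}(s)\,ds \mid \mathcal{F}^k_{T^k_{n+1}-}\,]$ collapses (the integral of the hazard rate up to the next jump has conditional mean related to the probability of jumping, which is $1$ given $T^k_{n+1}$) — this is the standard "compensator of a point process via its hazard rate" computation, but carried out on the product jumping filtration $\mathbb{F}^k$ where one must track which of the $d$ clocks rings at $T^k_{n+1}$. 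Once that normalization is pinned down, everything else is an application of results already quoted from \cite{he} and Lemma \ref{angleLemma}.
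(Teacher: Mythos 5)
Your first part is essentially the paper's own argument: rewrite $\int_0^\cdot \mathcal{D}^{\mathcal{Y},k,j}X\,dA^{k,j}$ as $\int_0^\cdot (\mathcal{D}^{\mathcal{Y},k,j}X/\Delta A^{k,j})\,d[A^{k,j},A^{k,j}]$ (both charge the same atoms, with $(\Delta A^{k,j})^2=\epsilon_k^2$), invoke the projection identity quoted before the lemma (Th. 5.25 in \cite{he}) and Lemma \ref{angleLemma}, and sum over $j$. The only slip there is your claim that the total variation is controlled by $\epsilon_k^{-1}[X^k,X^k](T)$: that inequality goes the wrong way, since the jumps of $X^k$ need not be of size at least $\epsilon_k$; integrability of the variation follows instead from Cauchy--Schwarz against the number of jumps up to $T$, or simply by localizing along $(T^k_n)_{n\ge 1}$. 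This is cosmetic.

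The second part, however, has a genuine gap. Identity \eqref{ger1} is a statement about a conditional expectation given $\mathcal{F}^k_{T^k_{n+1}-}$, and your test class --- the single predictable process $\mathds{1}_{]]T^k_n,T^k_{n+1}]]}$ for each $n$ --- only yields the unconditional identity $\mathbb{E}\big[\Delta X^k(T^k_{n+1})\mathds{1}_{\{T^k_{n+1}\le T\}}\big]=\epsilon_k^2\,\mathbb{E}\big[\sum_j U^{\mathcal{Y},k,j}X(T^k_{n+1})\mathds{1}_{\{\cdots\}}\big]$. The intermediate claim you lean on, namely that the predictable density of $(X^k-X^k(0))^{p,k}$ is, on $]]T^k_n,T^k_{n+1}]]$, the constant process $\mathbb{E}[\Delta X^k(T^k_{n+1})/\epsilon_k^2\,|\,\mathcal{F}^k_{T^k_{n+1}-}]$, cannot be salvaged by ``uniqueness of predictable densities'': that process is not $\mathbb{F}^k$-predictable (it is not even adapted on the open interval, since it depends on $T^k_{n+1}$), and $U^{\mathcal{Y},k,j}X$ need not have this form away from the jump time --- \eqref{ger1} is an identity \emph{at} $T^k_{n+1}$ only. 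The missing device is the one the paper uses: for an arbitrary $E\in\mathcal{F}^k_{T^k_{n+1}-}=\mathcal{F}^k_{T^k_n}\vee\sigma(T^k_{n+1})$, choose (Th.~31, p.~337 in \cite{bremaud}) an $\mathbb{F}^k$-predictable $H$ vanishing outside $]]T^k_n,T^k_{n+1}]]$ with $H(T^k_{n+1})=\mathds{1}_E$; since the Dol\'eans measure $\mu_{[A^{k,j}]}$ restricted to $]]T^k_n,T^k_{n+1}]]$ sits on the single atom $[[T^k_{n+1},T^k_{n+1}]]\cap Q^k_j$ with mass $\epsilon_k^2$ (on $\{\aleph_1(\eta^k_{n+1})=j\}$), the defining property of $U^{\mathcal{Y},k,j}X$ under $\mu_{[A^{k,j}]}$ gives $\mathbb{E}\big[\mathds{1}_E\Delta X^k(T^k_{n+1})\mathds{1}_{\{T^k_{n+1}\le T\}}\big]=\epsilon_k^2\sum_j\mathbb{E}\big[\mathds{1}_E\,U^{\mathcal{Y},k,j}X(T^k_{n+1})\mathds{1}_{\{T^k_{n+1}\le T,\ \aleph_1(\eta^k_{n+1})=j\}}\big]$, and \eqref{ger1} follows because $E$ is arbitrary and $\sum_j U^{\mathcal{Y},k,j}X(T^k_{n+1})$ is $\mathcal{F}^k_{T^k_{n+1}-}$-measurable (a predictable process evaluated at a stopping time), once one fixes the version of $U^{\mathcal{Y},k,j}X$ supported on $Q^k_j$. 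In particular, the hazard-rate normalization you flag as the main obstacle never enters: the factor $\epsilon_k^2$ is the atom mass of $[A^{k,j},A^{k,j}]$ at the jump, not the outcome of integrating $h^{k,j}$ over $]]T^k_n,T^k_{n+1}]]$, so no computation of $\mathbb{E}\big[\int_{T^k_n}^{T^k_{n+1}}\epsilon_k^2h^{k,j}(s)\,ds\,\big|\,\mathcal{F}^k_{T^k_{n+1}-}\big]$ is required.
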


\begin{proof}
See Section \ref{Appendixexplicit} (Appendix) for the proof of this result.
\end{proof}
For a given embedded discrete structure $\mathcal{Y}= \big((X^k)_{k\ge 1}, \mathscr{D}\big)$ w.r.t $X$, let us denote

\begin{eqnarray*}
\sum_{j=1}^d\oint_0^t\mathcal{D}^{\mathcal{Y},k,j}X(s)dA^{k,j} (s)&:=&\sum_{j=1}^d\int_0^t\mathcal{D}^{\mathcal{Y},k}X(s) dA^{k,j} (s)\\
& &\\
&-&\Big(\sum_{j=1}^d\int_0^{\cdot}\mathcal{D}^{\mathcal{Y},k} X(s) d A^{k,j} (s)\Big)^{p,k}(t),
\end{eqnarray*}
where $\oint$ is the $\mathbb{F}^k$-optional integral as introduced by Dellacherie and Meyer (see Chap 8, section 2 in\cite{dellacherie}) for optional integrands. Let us denote

\begin{equation}\label{uncOPERATORS}
\mathbb{D}^{\mathcal{Y},k,j} X(t) :=  \sum_{\ell=1}^{\infty} \mathcal{D}^{\mathcal{Y},k,j}X(t) 1\!\!1_{\{T^{k}_\ell\le t<  T^{k}_{\ell+1}\}},\quad \mathbb{U}^{\mathcal{Y},k,j}X(t) := U^{\mathcal{Y},k,j}X(t) \frac{d\langle A^{k,j}, A^{k,j}\rangle}{dt}
\end{equation}
for $0\le t\le T$. We then arrive at the following result.

\begin{proposition}\label{repdelta}
Let $\mathcal{Y}= \big((X^k)_{k\ge 1}, \mathscr{D}\big)$ be an imbedded discrete structure for a Wiener functional $X$. Then the $\mathbb{F}^k$-special semimartingale decomposition of $X^{k}$ is given by

\begin{equation}\label{discretediffform}
X^{k}(t) = X^k(0) + \sum_{j=1}^d \oint_0^t \mathbb{D}^{\mathcal{Y},k,j}X(s)dA^{k,j}(s) + \sum_{j =1}^d \int_0^t\mathbb{U}^{\mathcal{Y},k,j} X (s) ds,~0\le t \le T.
\end{equation}
\end{proposition}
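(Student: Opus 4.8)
The plan is to read off the decomposition \eqref{discretediffform} from the jump expansion \eqref{deltaXdef} together with the $\mathbb{F}^k$-dual predictable projection already identified in Lemma~\ref{explicit}. Write $Y^{k,j}:=\int_0^{\cdot}\mathcal{D}^{\mathcal{Y},k,j}X(u)\,dA^{k,j}(u)$ in the Lebesgue--Stieltjes sense, so that \eqref{deltaXdef} reads $X^k=X^k(0)+\sum_{j=1}^d Y^{k,j}$. Each $Y^{k,j}$ is a c\`adl\`ag process of locally integrable variation (this is exactly the assertion recorded just before \eqref{weakinfG}, since $Y^{k,j}=\int_0^{\cdot}\mathcal{D}^{\mathcal{Y},k,j}X/\Delta A^{k,j}\,d[A^{k,j},A^{k,j}]$ because $A^{k,j}$ is pure-jump with $\Delta[A^{k,j},A^{k,j}]=(\Delta A^{k,j})^2$); in particular $X^k$ is an $\mathbb{F}^k$-special semimartingale, its canonical decomposition is unique, and it suffices to exhibit the right-hand side of \eqref{discretediffform} as the sum of an $\mathbb{F}^k$-local martingale and an $\mathbb{F}^k$-predictable process of locally integrable variation.

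For the drift, Lemma~\ref{explicit} gives that the $\mathbb{F}^k$-dual predictable projection of $X^k-X^k(0)=\sum_{j=1}^d Y^{k,j}$ equals the absolutely continuous process $\sum_{j=1}^d\int_0^{\cdot}U^{\mathcal{Y},k,j}X(s)\,d\langle A^{k,j},A^{k,j}\rangle(s)$. By Lemma~\ref{angleLemma} the angle bracket $\langle A^{k,j},A^{k,j}\rangle$ is absolutely continuous with density $\epsilon_k^2 h^{k,j}$, so the very definition of $\mathbb{U}^{\mathcal{Y},k,j}X$ in \eqref{uncOPERATORS} yields
\[
\int_0^t U^{\mathcal{Y},k,j}X(s)\,d\langle A^{k,j},A^{k,j}\rangle(s)=\int_0^t\mathbb{U}^{\mathcal{Y},k,j}X(s)\,ds,\qquad 0\le t\le T,
\]
and hence $\bigl(X^k-X^k(0)\bigr)^{p,k}=\sum_{j=1}^d\int_0^{\cdot}\mathbb{U}^{\mathcal{Y},k,j}X(s)\,ds$, which is $\mathbb{F}^k$-predictable of locally integrable variation by construction of the dual predictable projection.

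For the local martingale part, recall that the optional integral defined before \eqref{uncOPERATORS} is the Lebesgue--Stieltjes integral minus its $\mathbb{F}^k$-dual predictable projection. Since $\mathbb{D}^{\mathcal{Y},k,j}X$ coincides with $\mathcal{D}^{\mathcal{Y},k,j}X$ on $\cup_{\ell\ge1}[[T^{k,j}_\ell,T^{k,j}_\ell]]$, hence $\mu_{[A^{k,j}]}$-a.e., one has $\int_0^{\cdot}\mathbb{D}^{\mathcal{Y},k,j}X\,dA^{k,j}=\int_0^{\cdot}\mathcal{D}^{\mathcal{Y},k,j}X\,dA^{k,j}=Y^{k,j}$, so $\oint_0^{\cdot}\mathbb{D}^{\mathcal{Y},k,j}X\,dA^{k,j}=Y^{k,j}-(Y^{k,j})^{p,k}$. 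Summing over $j$ and using linearity of the dual predictable projection,
\[
\sum_{j=1}^d\oint_0^{\cdot}\mathbb{D}^{\mathcal{Y},k,j}X\,dA^{k,j}=\bigl(X^k-X^k(0)\bigr)-\bigl(X^k-X^k(0)\bigr)^{p,k},
\]
which is an $\mathbb{F}^k$-local martingale by the defining property of the dual predictable projection. Adding this identity to the drift computed in the previous paragraph gives \eqref{discretediffform}, and uniqueness of the canonical decomposition of the special semimartingale $X^k$ shows this is the $\mathbb{F}^k$-special semimartingale decomposition.

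I expect the only genuinely delicate point to be the bookkeeping around the optional integral: verifying that $\oint_0^{\cdot}\mathbb{D}^{\mathcal{Y},k,j}X\,dA^{k,j}$ is well defined (the integrand is $\mathbb{F}^k$-optional and the underlying Lebesgue--Stieltjes integral has locally integrable variation) and that replacing $\mathcal{D}^{\mathcal{Y},k,j}X$ by $\mathbb{D}^{\mathcal{Y},k,j}X$ does not change it. Everything else is a routine use of the linearity and the defining property of $\mathbb{F}^k$-dual predictable projections, of Lemmas~\ref{angleLemma} and~\ref{explicit}, and of the uniqueness of the special semimartingale decomposition. Should one prefer not to cite Lemma~\ref{explicit}, the one computation that must then be redone is the $\mathbb{F}^k$-dual predictable projection of each $Y^{k,j}$, which follows directly from the rewriting $Y^{k,j}=\int_0^{\cdot}\mathcal{D}^{\mathcal{Y},k,j}X/\Delta A^{k,j}\,d[A^{k,j},A^{k,j}]$ and the disintegration set up before \eqref{weakinfG}.
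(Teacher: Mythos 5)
Your proposal is correct and follows the same route the paper intends: the paper gives no separate proof of Proposition~\ref{repdelta} precisely because it is read off, as you do, from the jump expansion (\ref{deltaXdef}), Lemma~\ref{explicit} for the $\mathbb{F}^k$-dual predictable projection, the absolute continuity of $\langle A^{k,j},A^{k,j}\rangle$ from Lemma~\ref{angleLemma}, the definition of the optional integral $\oint$, and the definitions in (\ref{uncOPERATORS}). Your extra care about replacing $\mathcal{D}^{\mathcal{Y},k,j}X$ by $\mathbb{D}^{\mathcal{Y},k,j}X$ under $dA^{k,j}$ and about uniqueness of the special semimartingale decomposition is consistent with the paper's setup and introduces no gap.
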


At this point, we stress that in many cases playing with the variational operators $(\mathcal{D}^{\mathcal{Y},k,j}X,U^{\mathcal{Y},k,j} X)$ attached to an imbedded discrete structure is good enough to infer non-trivial information on $X$ without computing the limit of $(\mathbb{D}^{\mathcal{Y},k,j}X,\mathbb{U}^{\mathcal{Y},k,j} X)$, which in many cases, it can be problematic due to lack of smoothness of $X$ w.r.t state. See \cite{LEAO_OHASHI2017.1, LEAO_OHASHI2017.2} for details. The differential form (\ref{discretediffform}) describes an imbedded discrete structure for a Wiener functional $X$ and it will be the starting point to analyze the sensitivities of $X$ w.r.t Brownian state under rather weak regularity conditions as demonstrated in Section \ref{differentialSECTION}.

\

\noindent \textbf{Pathwise description of the variational operators attached to $\mathcal{Y}$:} In order to compute the preliminary variational operators $(\mathcal{D}^{\mathcal{Y},k,j}X,U^{\mathcal{Y},k,j} X)$, we can actually proceed path wisely because all the objects are path-dependent functionals of the discrete-type skeleton $\mathscr{D}$. In the sequel, for any $(r,n)$ such that $1\le r\le n$ and $\mathbf{b}^k_n = (s^k_1, \tilde{i}^k_1, \ldots, s^k_n, \tilde{i}^k_n)$, we denote

$$\pi_r(\mathbf{b}^k_n):=(s^k_1, \tilde{i}^k_1, \ldots, s^k_r, \tilde{i}^k_r).$$
Let $F^k_n:\mathbb{S}_k^n\rightarrow\mathbb{R}$ be a sequence of Borel functions. Let us define

$$\nabla_j F^{k}(\mathbf{b}^k_n):=\frac{F^k_n (\textbf{b}^k_n) - F^k_{n-1}(\pi_{n-1}(\textbf{b}^k_{n}))}{\epsilon_k\aleph_2(\tilde{i}^k_n)}\mathds{1}_{\{\aleph_1(\textbf{b}^k_n) = j\}},~\mathbf{b}^k_n\in \mathbb{S}^n_k; n\ge 1,$$
for $j=1,\ldots, d$, and

\begin{equation}\label{pathwiseUcond}
\mathscr{U}F^k(\mathbf{b}^k_n):=\int_{\mathbb{S}_k} \frac{F^k_{n+1}(\mathbf{b}^k_n, s^k_{n+1},\tilde{i}^k_{n+1}) - F^k_{n}(\mathbf{b}^k_n)}{\epsilon^2_k}\nu^k_{n+1}(ds^k_{n+1}d\tilde{i}^k_{n+1}|\mathbf{b}^k_n),
\end{equation}
for $\mathbf{b}^k_n\in \mathbb{S}^n_k, n\ge 0$. We observe that for a sequence of Borel maps $F^k_n:\mathbb{S}_k^n\rightarrow\mathbb{R}$ satisfying

$$X^k(T^k_n) = F^k_n (\mathcal{A}^k_n)~a.s$$
we will get

$$
\nabla_j F^{k} (\mathcal{A}^k_n) = \mathcal{D}^{\mathcal{Y},k,j}X(T^k_n)\mathds{1}_{\{\aleph_1(\eta^k_n)=j\}}~a.s, n\ge 0, j=1,\ldots,d,
$$
and
\begin{equation}\label{pathREP2}
\mathscr{U}F^k(\mathcal{A}^k_n)=\mathbb{E}\Bigg[\sum_{j=1}^d U^{\mathcal{Y},k,j}X(T^k_{n+1})\big|\mathcal{F}^k_{T^k_n}\Bigg] ~a.s
\end{equation}
for each $n\ge 0$. The right-hand side of (\ref{pathREP2}) will be called the $\mathbb{F}^k$-conditional weak infinitesimal generator of $X$ based on a structure $\mathcal{Y}$. It turns out that the operator (\ref{pathwiseUcond}) play the role of the Hamiltonian in a non-Markovian optimal stopping problem (see \cite{LEAO_OHASHI2017.2} and example \ref{exampleOS}). In the sequel, we present two examples related to martingales and stochastic control based on a generic Wiener functional.

\begin{example}
Let $X\in \mathbf{H}^2(\mathbb{F})$ be a martingale with terminal condition $\xi\in L^2(\mathcal{F}_T)$ and $\mathbb{F}$ is the filtration generated by a one-dimensional Brownian motion. A simple imbedded structure associated with $X$ is the following: Let $\xi^k:= G^k_{\gamma(k,T)}\big(\mathcal{A}^k_{\gamma(k,T)}\big)$ be an approximation for $\xi$, i.e., $\lim_{k\rightarrow+\infty}\xi^k = \xi$ in $L^2(\mathbb{P})$, where $\gamma(k,T) = \lceil\epsilon^{-2}_kT\rceil$. Let us define the good approximation sequence as

$$X^k(T^k_n):=\mathbb{E}\big[\xi^k|\mathcal{F}^k_{T^k_n}\big]; 0\le n\le \gamma(k,T),$$
Due to the path continuity of $X$, the weak convergence $\mathbb{F}^k\rightarrow \mathbb{F}$, Lemma \ref{meshlemma} and (\ref{uniforTk}), one can easily check that $\big((X^k)_{k\ge 1}, \mathscr{D}\big)$ is an imbedded discrete structure. A simple computation reveals that $X^k(T^k_n) = F^k_n(\mathcal{A}^k_n)~a.s$ where

$$F^k_n (\mathbf{b}^k_n)= \int_{\mathbb{S}^{r-n}}G^k_{r}(\mathbf{b}^k_n, q^k_{n,r})\nu^k_{n,r}(dq^k_{n,r}|\mathbf{b}^k_n); \mathbf{b}^k_n\in \mathbb{S}^n_k;~0\le n < r,$$
where $r$ is a shorthand notation for $\gamma(k,T)$. Then,

$$
\nabla F^k_n(\mathbf{b}^k_n) = \frac{F^k_n (\mathbf{b}^k_n) - F^k_{n-1}(\pi_{n-1}(\mathbf{b}^k_{n}))}{\epsilon_k\tilde{i}^k_n};~\mathbf{b}^k_n \in \mathbb{S}^n_k, 1\le n\le r,
$$
and
\begin{eqnarray*}
\mathscr{U}F^k_n(\mathbf{b}^k_n)&=&\frac{1}{2}\int_{0}^{\infty} \frac{F^k_{n+1}(\mathbf{b}^k_n, x,1) - F^k_{n}(\mathbf{b}^k_n)}{\epsilon^2_k}f_k(x)dx\\
& &\\
&+&\frac{1}{2}\int_{0}^{\infty} \frac{F^k_{n+1}(\mathbf{b}^k_n, x,-1) - F^k_{n}(\mathbf{b}^k_n)}{\epsilon^2_k}f_k(x)dx
\end{eqnarray*}
for $\mathbf{b}^k_n\in \mathbb{S}^n_k, 0\le n\le r$.
\end{example}

\begin{example}\label{exampleOS}
Let us illustrate the case of the optimal stopping problem. In \cite{LEAO_OHASHI2017.2,bezerra}, the authors propose a systematic way in concretely solving optimal stopping problems based on generic Wiener functionals beyond the Markovian case. Let $S$ be the Snell supermartingale process
\begin{equation}\label{snell}
S (t):= \esssup_{\tau\ge t} \mathbb{E} \left[ Z(\tau)  \mid \mathcal{F}_t \right], \quad 0 \leq t \leq T,
\end{equation}
where esssup is computed over the class of all $\mathbb{F}$-stopping times located on $[t,T]$. It is shown that value processes of the form (\ref{snell}) based on reward continuous functionals $Z$ applied to path-dependent SDEs driven by fractional Brownian motion admit imbedded discrete structures $\mathcal{Y} = \big((S^k)_{k\ge 1},\mathscr{D}\big)$ with pathwise representations $S^k(T^k_n) = \mathbb{V}^k_n(\mathcal{A}^k_n)~a.s; 0\le n< r$ where $\mathbb{V}^k$ solves the nonlinear equation

\begin{eqnarray*}
\max \left\{\mathscr{U}\mathbb{V}^k_i\big(\mathbf{b}^k_i\big);  \gamma^k_{i,r}(\mathbf{b}^k_i) - \mathbb{V}^k_i(\mathbf{b}^k_i) \right\} & = & 0;\quad i=r-1, \ldots, 0, \\
\mathbb{V}^k_{r} (\mathbf{b}^k_r) &=&\gamma^k_{r,r}(\mathbf{b}^k_r); \mathbf{b}^k_r\in\mathbb{S}^r_k,
\end{eqnarray*}
where $\gamma^k_{n,r}$ in an explicit pathwise representation for a simple imbedded discrete structure $\mathcal{Z} = \big( (Z^k)_{k\ge 1}, \mathscr{D}\big)$ associated with $Z$.
\end{example}

\section{Differential structure of Wiener functionals}\label{differentialSECTION}
In this section, we present asymptotic results for the sensitivities associated with imbedded discrete structures as defined in the previous section.

\begin{definition}\label{fenergy}
Let $\mathcal{Y}=\big( (X^k)_{k\ge 1}; \mathscr{D}\big)$ be an imbedded discrete structure for a Wiener functional $X$. We say that $\mathcal{Y}$ has \textbf{finite energy} if

$$\mathcal{E}^{2,\mathcal{Y}}(X):=\sup_{k\ge 1}\mathbb{E}\sum_{n\ge 1}|\Delta X^{k}(T^k_n)|^21\!\!1_{\{T_{n}^k \leq T\}} < \infty.$$
\end{definition}

\begin{definition}\label{deltacov}
Let $\mathcal{Y}=\big( (X^k)_{k\ge 1}; \mathscr{D}\big)$ be an imbedded discrete structure for a Wiener functional $X$. We say that $X$ admits the $\mathcal{Y}$-\textbf{covariation} w.r.t to j-th component of the Brownian motion $B$ if the limit

$$\langle X, B^j\rangle^{\mathcal{Y}}(t):=\lim_{k\rightarrow \infty}[X^{k}, A^{k,j}](t)$$
exists weakly in $L^1(\mathbb{P})$ for each $t\in [0,T]$.
\end{definition}
The idea behind Definition \ref{fenergy} is compactness for an imbedded discrete structure which will allow us to extract convergent subsequences on the components of the special semimartingale decomposition given in Proposition \ref{repdelta}. The role of the $\mathcal{Y}$-covariation is to bring stability to the semimartingale decomposition in Proposition \ref{repdelta}.

It is important to point out that the above properties are only important to get convergence of the underlying differential structure $\big(\mathbb{D}^{\mathcal{Y},k,j}X, \mathbb{U}^{\mathcal{Y},k,j}X;\\ j=1,\ldots, d\big)$ but in typical applications, we do not really need the existence of the limiting differential form to solve concrete problems in e.g non-Markovian stochastic control. See \cite{LEAO_OHASHI2017.1, LEAO_OHASHI2017.2} for all the details.

From Proposition \ref{repdelta}, we know that each imbedded discrete structure $\mathcal{Y}=\big( (X^k)_{k\ge 1}; \mathscr{D}\big)$ carries a sequence of $\mathbb{F}^k$-special semimartingale decompositions

$$X^k(t) = X^k(0)+ M^{\mathcal{Y},k}(t) + N^{\mathcal{Y},k}(t); 0\le t\le T,$$
where $M^{\mathcal{Y},k}$ is an $\mathbb{F}^k$-square-integrable martingale and $N^{\mathcal{Y},k}$ is an $\mathbb{F}^k$-predictable absolutely continuous process.

\begin{definition}
Let $X=X(0) + M + N$ be an $\mathbb{F}$-adapted process such that $M\in \mathbf{H}^2(\mathbb{F})$ and $N\in \mathbf{B}^2(\mathbb{F})$ has continuous paths. An imbedded discrete structure $\mathcal{Y}=\big( (X^k)_{k\ge 1}; \mathscr{D}\}$ for $X$ is said to be \textbf{stable} if $M^{\mathcal{Y},k}\rightarrow M$ weakly in $\mathbf{B}^2(\mathbb{F})$ as $k\rightarrow+\infty$.
\end{definition}

\begin{remark}
By the Burkholder-Davis-Gundy's inequality and the fact that $\Delta X^k = \Delta M^{\mathcal{Y},k}$, we have $\mathcal{E}^{2,\mathcal{Y}}(X)<\infty$ for any stable discrete structure $\mathcal{Y}$ w.r.t $X\in \mathbf{B}^2(\mathbb{F})$.
\end{remark}

Going to the literature on convergence of stochastic processes, we can find some compactness conditions to ensure that a given imbedded structure is stable.

\begin{lemma}\label{findingSTABLE}
Let $X = X(0) + M + N$ be a square-integrable continuous semimartingale where $M\in \mathbf{H}^2(\mathbb{F})$ and $N$ is $\mathbb{F}$-adapted with bounded variation. If $\mathcal{Y} = \big((X^k)_{k\ge 1}, \mathscr{D}\big)$ is an imbedded discrete structure w.r.t $X$ such that $\mathcal{E}^{2,\mathcal{Y}}(X)<\infty$, $\lim_{k\rightarrow+\infty}X^k=X$ in $\mathbf{B}^2(\mathbb{F})$ and

$$\text{Var}\big(N^{\mathcal{Y},k}\big)~\text{is tight in}~\mathbb{R},$$
then $\mathcal{Y}$ is stable, where $\text{Var}$ denotes the first variation of a process over $[0,T]$. Let $X$ be a continuous strong Dirichlet process (see Section \ref{examplesDIFF}) with canonical decomposition $X= X(0) + M + N; M\in \mathbf{H}^2(\mathbb{F})$ and $N$ has $2$-null variation. Let $\mathcal{Y} = \big((X^k)_{k\ge 1}, \mathscr{D}\big)$ be a structure associated with $X$. If for every $\epsilon>0$

$$\lim_{k\rightarrow+\infty}\sup_{\ell\ge 1}\mathbb{P}\Bigg(\sum_{r=1}^\infty \big|N^{\mathcal{Y},\ell}(T^{k}_r) - N^{\mathcal{Y},\ell}(T^{k}_{r-1})\big|^2\mathds{1}_{\{T^{k}_r\le T\}} > \epsilon\Bigg)=0,$$
$\sup_{0\le t\le T}|N^{\mathcal{Y},k}(t)|$ is bounded in probability, $\lim_{k\rightarrow+\infty}X^k=X$ in $\mathbf{B}^2(\mathbb{F})$ and finite energy $\mathcal{E}^{2,\mathcal{Y}}(X)< \infty$ holds, then $\mathcal{Y}$ is stable.
\end{lemma}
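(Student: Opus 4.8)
The plan is to prove stability by showing that the whole sequence of $\mathbb{F}^k$-martingale parts $M^{\mathcal{Y},k}$ from Proposition \ref{repdelta} converges weakly to $M$ in $\mathbf{B}^2(\mathbb{F})$, via subsequential weak limits combined with the uniqueness of the canonical decomposition of $X$ (as a continuous semimartingale in the first case, as a continuous strong Dirichlet process in the second). First I would use finite energy: since $\Delta M^{\mathcal{Y},k}=\Delta X^k$ and $M^{\mathcal{Y},k}$ is a purely discontinuous $\mathbb{F}^k$-martingale, the Burkholder--Davis--Gundy inequality gives $\sup_{k\ge1}\mathbb{E}\sup_{0\le t\le T}|M^{\mathcal{Y},k}(t)|^2\le C\,\mathcal{E}^{2,\mathcal{Y}}(X)<\infty$. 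Hence $(M^{\mathcal{Y},k})_{k\ge1}$ is bounded in the Hilbert space $\mathbf{H}^2(\mathbb{F})$, so it is relatively weakly compact, and on $\mathbf{H}^2(\mathbb{F})$ the weak topology coincides with the trace of $\sigma(\mathbf{B}^2,\text{M}^2)$ (test functionals of the form $V^{pr}(T)+V^{pd}(T)$ span a dense subset of $L^2(\mathcal{F}_T)$).

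Given any subsequence, extract a further one along which $M^{\mathcal{Y},k_j}\rightharpoonup\tilde M$ in $\sigma(\mathbf{B}^2,\text{M}^2)$. Since $\mathbf{H}^2(\mathbb{F})$ is a closed linear, hence convex, subspace of $\mathbf{B}^2(\mathbb{F})$, Mazur's theorem gives $\tilde M\in\mathbf{H}^2(\mathbb{F})$. Using $X^k\to X$ strongly in $\mathbf{B}^2(\mathbb{F})$ and $X^k(0)\to X(0)$, the decomposition of Proposition \ref{repdelta} yields $N^{\mathcal{Y},k_j}=X^{k_j}-X^{k_j}(0)-M^{\mathcal{Y},k_j}\rightharpoonup\tilde N:=X-X(0)-\tilde M$ weakly in $\mathbf{B}^2(\mathbb{F})$. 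Since $X-X(0)=M+N$, we have $\tilde N-N=M-\tilde M$, a continuous $\mathbf{H}^2$-martingale starting from $0$; it therefore suffices to show that $\tilde N-N$ (equivalently $\tilde N$, given the regularity of $N$) inherits the regularity of $N$, for then $M-\tilde M$ is forced to vanish.

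This last point is the heart of the matter and the step I expect to be the main obstacle: passing a path-regularity property of the drift through a merely weak limit. In the semimartingale case I would use the tightness of $\text{Var}(N^{\mathcal{Y},k})$ in $\mathbb{R}$, together with the a priori $\mathbf{B}^2$-bound coming from finite energy, to invoke a convergence / lower-semicontinuity result for predictable finite-variation processes of the kind available in \cite{dellacherie,LEAO_OHASHI2013} and in the literature on convergence of stochastic processes, concluding that $\tilde N$ is $\mathbb{F}$-adapted of bounded variation; then $M-\tilde M=\tilde N-N$ is a continuous bounded-variation martingale vanishing at $0$, hence $M=\tilde M$. In the strong Dirichlet case I would instead transfer to the limit the uniform-in-$\ell$ vanishing of the discrete quadratic sums $\sum_{r\ge1}|N^{\mathcal{Y},\ell}(T^k_r)-N^{\mathcal{Y},\ell}(T^k_{r-1})|^2\mathds{1}_{\{T^k_r\le T\}}$ as $k\to\infty$, using boundedness in probability of $\sup_{0\le t\le T}|N^{\mathcal{Y},k}(t)|$ together with (\ref{uniforTk}) and (\ref{weakfiltration}) to control the partitions, obtaining that $\tilde N$ has $2$-null variation; then by Kunita--Watanabe $[M-\tilde M]=[\tilde N-N]=0$, so $M-\tilde M$ is a continuous martingale with zero quadratic variation starting from $0$, hence $M=\tilde M$.

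Finally, every subsequence of $(M^{\mathcal{Y},k})_{k\ge1}$ has a further subsequence converging weakly to the same limit $M$, so the full sequence converges weakly to $M$ in $\mathbf{B}^2(\mathbb{F})$, which is the definition of stability of $\mathcal{Y}$. The routine parts are the Burkholder--Davis--Gundy estimate and the Mazur / weak-compactness bookkeeping; the delicate work is entirely in the two weak-limit-of-the-drift arguments, and in verifying that the stated hypotheses (tightness of the total variations, respectively the uniform vanishing of the discrete quadratic sums) are precisely what makes those arguments go through.
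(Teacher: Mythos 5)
Your scheme (finite energy $\Rightarrow$ $\mathbf{B}^2$-boundedness of $M^{\mathcal{Y},k}$ via Burkholder--Davis--Gundy, subsequential weak limits, identification of the limit through uniqueness of the canonical decomposition) is genuinely different from the paper's proof, which simply applies Th.~11 in \cite{memin} to obtain $M^{\mathcal{Y},k}\rightarrow M$ uniformly in probability in the semimartingale case, and Th.~2 in \cite{coquet3} (checked to carry over to the random partitions $\mathcal{T}$) in the Dirichlet case. As written, however, your argument has two gaps. The smaller one: the Mazur step is not valid, because the $M^{\mathcal{Y},k}$ are $\mathbb{F}^k$-martingales, not $\mathbb{F}$-martingales, so they do not lie in $\mathbf{H}^2(\mathbb{F})$ and the weak closedness of that convex subspace says nothing about their weak limit points. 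The statement you need --- that all $\mathbf{B}^2$-weak limit points of a sequence $Y^k\in\mathbf{H}^2(\mathbb{F}^k)$ with $\sup_k\mathbb{E}[Y^k,Y^k](T)<\infty$ are $\mathbb{F}$-square-integrable martingales --- is true, but it is the content of Lemma \ref{energylemma} (via the argument of Prop.~3.1 in \cite{LEAO_OHASHI2013}, which uses the convergence of filtrations), not of convexity; this is fixable.

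The serious gap is the step you yourself flag as the heart of the matter, which is never actually carried out: deducing that the weak limit $\tilde N$ of the drifts is of bounded variation from tightness of $\text{Var}(N^{\mathcal{Y},k})$ (respectively, that it has vanishing quadratic variation from the uniform-in-$\ell$ smallness in probability of the discrete quadratic sums) cannot be obtained from the unspecified ``lower semicontinuity'' results you invoke. The hypotheses here are tightness/probability statements, and sets of processes defined by constraints of the form $\mathbb{P}\{F(Y)>K\}\le\epsilon$ are not convex, hence not closed for $\sigma(\mathbf{B}^2,\text{M}^2)$; so no soft convexity or Helly-type argument transfers pathwise regularity through the weak limit. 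Making this rigorous requires stopping/localization arguments performed uniformly across the changing filtrations $\mathbb{F}^k$, which is precisely what Th.~11 in \cite{memin} and Th.~2 in \cite{coquet3} provide and what the paper delegates to them. Unless you supply such an argument (or cite those theorems at this point, which essentially reduces your proof to the paper's), the proof is incomplete.
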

\begin{proof}
For the semimartingale case, we just need to apply Th. 11 in \cite{memin} to conclude $M^{\mathcal{Y},k}\rightarrow M$ uniformly in probability as $k\rightarrow+\infty$ and this implies $M^{\mathcal{Y},k}\rightarrow M$ weakly in $\mathbf{B}^2(\mathbb{F})$ as $k\rightarrow +\infty$ so that $\mathcal{Y}$ is stable. Th 2 in \cite{coquet3} allows us to conclude stability for the Dirichlet case up to the fact that our partition is random, but one can easily check that all arguments in the proof of Th 2 in \cite{coquet3} apply to our case as well.
\end{proof}

Stability of an imbedded discrete structure is a natural property since we are interested in analyzing Wiener functionals with at least a non-null martingale component. See also Remark \ref{whystability} for further details. Let us now devote our attention to the study of the asymptotic properties of

$$\mathbb{D}^{\mathcal{Y},k,j}X; j=1,\ldots, d,$$
for an imbedded discrete structure $\mathcal{Y} = \big((X^k)_{k\ge 1}, \mathscr{D}\big)$ w.r.t $X$. We set

\begin{equation}\label{DX}
\mathcal{D}^{\mathcal{Y}}_jX:=\lim_{k\rightarrow+\infty}\mathbb{D}^{\mathcal{Y},k,j}X~\text{weakly in}~L^2_a(\mathbb{P}\times Leb)
\end{equation}
whenever the right-hand side of (\ref{DX}) exists for a given finite-energy embedded structure $\mathcal{Y}$ and, in this case, we write

$$\mathcal{D}^{\mathcal{Y}}X:=\big(\mathcal{D}^{\mathcal{Y}}_1X, \ldots, \mathcal{D}^{\mathcal{Y}}_dX\big).$$

We start the analysis with the following result.
\begin{theorem}\label{towardsD1}
Let $X= X(0) + \sum_{j=1}^d\int H_jdB^j + V$ be an $\mathbb{F}$-adapted process such that $H = (H_1,\ldots, H_d), H_j\in L^2_a(\mathbb{P}\times Leb); j=1,\ldots, d$ and $V\in \mathbf{B}^2(\mathbb{F})$ has continuous paths. Then,

$$\mathcal{D}^\mathcal{Y}X = H$$
for every stable imbedded discrete structure $\mathcal{Y} = \big((X^k)_{k\ge 1}, \mathscr{D}\big)$.
\end{theorem}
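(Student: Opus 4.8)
The plan is to exploit the stability assumption together with the structure of the $\mathbb{F}^k$-special semimartingale decomposition in Proposition \ref{repdelta}. Write $X^k(t) = X^k(0) + M^{\mathcal{Y},k}(t) + N^{\mathcal{Y},k}(t)$, where $M^{\mathcal{Y},k} = \sum_{j=1}^d \oint_0^\cdot \mathbb{D}^{\mathcal{Y},k,j}X\,dA^{k,j}$ and $N^{\mathcal{Y},k} = \sum_{j=1}^d \int_0^\cdot \mathbb{U}^{\mathcal{Y},k,j}X\,ds$. By stability, $M^{\mathcal{Y},k}\to M := \sum_{j=1}^d\int H_j\,dB^j$ weakly in $\mathbf{B}^2(\mathbb{F})$. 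The first step is to identify the candidate limit: since $\mathbb{D}^{\mathcal{Y},k,j}X$ lives in $L^2_a(\mathbb{P}\times Leb)$ (finite energy, which holds for stable structures by the Remark) and $\langle A^{k,j},A^{k,j}\rangle(t) = \epsilon_k^2\int_0^t h^{k,j}(s)\,ds$ with $d\langle A^{k,j},A^{k,j}\rangle/dt \to 1$ in an appropriate averaged sense (this is where Lemma \ref{angleLemma} and the ergodic behavior of the renewal density $h^{k,j}$ enter), the $L^2$-boundedness of $M^{\mathcal{Y},k}$ forces $\{\mathbb{D}^{\mathcal{Y},k,j}X\}_k$ to be bounded in $L^2_a(\mathbb{P}\times Leb)$. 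Hence along a subsequence $\mathbb{D}^{\mathcal{Y},k,j}X \rightharpoonup G_j$ weakly in $L^2_a(\mathbb{P}\times Leb)$ for some $G_j$, and it remains to show $G_j = H_j$ for every such subsequential limit (which then upgrades to full convergence and gives $\mathcal{D}^{\mathcal{Y}}_jX = H_j$).

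To pin down $G_j = H_j$ I would test against a dense class of processes. Given the weak convergence $\mathbb{F}^k\to\mathbb{F}$ from \eqref{weakfiltration} and $\sup_t|A^{k,j}-B^j|\le\epsilon_k$ from \eqref{akb}, for any bounded $\mathbb{F}$-adapted continuous (or simple predictable) $\phi$ one can build $\mathbb{F}^k$-adapted approximations $\phi^k$ with $\phi^k\to\phi$ suitably, and compute $\mathbb{E}[M^{\mathcal{Y},k}(T)\,\Phi^k]$ against a test functional $\Phi^k$ built from stochastic integrals of $\phi^k$ against $A^{k,j}$. On one side, $\mathbb{E}\int_0^T\phi^k_j\,\mathbb{D}^{\mathcal{Y},k,j}X\,d\langle A^{k,j},A^{k,j}\rangle \to \mathbb{E}\int_0^T\phi_j G_j\,ds$ using the weak convergence of $\mathbb{D}^{\mathcal{Y},k,j}X$ and the convergence of the angle bracket density; the cross terms $j\ne j'$ vanish because distinct $A^{k,j}$'s are orthogonal martingales. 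On the other side, since $M^{\mathcal{Y},k}\to M$ weakly in $\mathbf{B}^2$ and $M = \sum_j\int H_j\,dB^j$, the same pairing converges to $\mathbb{E}\int_0^T\phi_j H_j\,ds$ by the Itô isometry. Equating and letting $\phi$ range over a dense set yields $G_j = H_j$ in $L^2_a(\mathbb{P}\times Leb)$.

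The main obstacle I anticipate is the passage involving the angle bracket: one must control $d\langle A^{k,j},A^{k,j}\rangle/dt = \epsilon_k^2 h^{k,j}$ and show that integrating a weakly convergent sequence against this (random, $k$-dependent) measure still converges to the Lebesgue integral of the weak limit — i.e. a "weak $\times$ weak" limit problem where neither factor converges strongly. This likely requires showing $\epsilon_k^2 h^{k,j}\to 1$ in $\sigma(\mathbf{B}^1,\mathrm{M}^\infty)$ or in some averaged $L^1$ sense uniformly enough, exploiting the renewal structure (the increments $\Delta T^{k,j}_n$ are i.i.d.\ $\epsilon_k^2\tau$, so $h^{k,j}$ is a hazard-rate renewal density that averages to $\epsilon_k^{-2}\mathbb{E}\tau^{-1}$-type constants over macroscopic time windows) together with \eqref{uniforTk}. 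A clean way around it is to instead pair directly at the level of the optional integrals $\oint \mathbb{D}^{\mathcal{Y},k,j}X\,dA^{k,j}$ and their quadratic variations $[M^{\mathcal{Y},k}]$, using that $[A^{k,j},A^{k,j}]$ has the same asymptotics as $\langle A^{k,j},A^{k,j}\rangle$ and that $[M^{\mathcal{Y},k}]\to[M] = \sum_j\int H_j^2\,ds$, reducing the identification to the uniqueness of the integrand in the Brownian martingale representation. The remaining steps — boundedness, subsequence extraction, and the final density argument — are routine.
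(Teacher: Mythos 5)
Your overall skeleton (use stability of $\mathcal{Y}$, get $L^2_a(\mathbb{P}\times Leb)$-boundedness of $\{\mathbb{D}^{\mathcal{Y},k,j}X\}_k$, extract weak limits, identify them by testing) matches the paper's, but the step on which everything hinges is exactly the one you leave open. Your identification argument requires passing from pairings against $d\langle A^{k,j},A^{k,j}\rangle(s)=\epsilon_k^2h^{k,j}(s)\,ds$ (or against $d[A^{k,j},A^{k,j}]$) to pairings against $ds$, and you explicitly defer this to an unproven averaging property of the renewal density $\epsilon_k^2h^{k,j}$ in a ``weak $\times$ weak'' limit, or alternatively to $[M^{\mathcal{Y},k}]\rightarrow[M]$. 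Neither workaround is carried out, and the second is not available: weak convergence in $\mathbf{B}^2(\mathbb{F})$ does not imply convergence of quadratic variations (the paper's Lemma \ref{lfund2} only gives convergence of the \emph{cross} brackets $[Y^k,A^{k,j}]$), and even if $[M^{\mathcal{Y},k}](T)\to\sum_j\int_0^T H_j^2\,ds$ held, it would not identify the weak $L^2_a$-limit of $\mathbb{D}^{\mathcal{Y},k,j}X$ as $H_j$ (squares neither determine signs nor commute with weak limits). In addition, your pairing $\mathbb{E}[M^{\mathcal{Y},k}(T)\Phi^k]$ couples a weakly convergent sequence with a $k$-dependent test functional, so you need \emph{strong} $L^2$-convergence of $\Phi^k=\sum_j\oint\phi^k_j\,dA^{k,j}$ to $\sum_j\int\phi_j\,dB^j$; this is nontrivial and is precisely the content of Lemma \ref{lfund1}, which you do not address.

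The paper resolves the problematic step not through the density $\epsilon_k^2h^{k,j}$ but through a discrete summation identity: since $\mathbb{D}^{\mathcal{Y},k,j}X$ is constant on the stochastic intervals and $\mathbb{E}\big[\Delta T^{k,j}_n\mid\mathcal{F}^k_{T^{k,j}_{n-1}}\big]=\epsilon_k^2=|\Delta A^{k,j}(T^{k,j}_{n-1})|^2$, testing against $g\in L^\infty$ yields (\ref{id}), i.e.
\begin{equation*}
\mathbb{E}\,g\int_0^t\mathbb{D}^{\mathcal{Y},k,j}X(s)\,ds \;=\; \mathbb{E}\,g\,[X^k,A^{k,j}](t)\;+\;\text{error},
\end{equation*}
where the error is controlled by Cauchy--Schwarz, the finite-energy bound $\mathcal{E}^{2,\mathcal{Y}}(X)<\infty$ and Lemma \ref{gknlemma} ($\sup_n|g^{k,j}_n|\to0$). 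Then stability enters only through the cross bracket: $[X^k,A^{k,j}]=[M^{\mathcal{Y},k},A^{k,j}]\to[M,B^j]=\int_0^\cdot H_j\,ds$ weakly in $L^1(\mathbb{P})$ by Lemma \ref{lfund2} (via Corollary \ref{existenceCOV}), and the $L^2_a$-weak relative compactness of $\{\mathbb{D}^{\mathcal{Y},k,j}X\}$ (obtained from the exact identity (\ref{enerA}), again using $\mathbb{E}\Delta T^{k,j}_n=\epsilon_k^2$, not from any convergence of $h^{k,j}$) identifies every limit point as $H_j$. Until you either reproduce this transfer mechanism between $\Delta T^{k,j}_n$ and $|\Delta A^{k,j}|^2$ or rigorously establish your averaged convergence of $\epsilon_k^2h^{k,j}$ compatibly with the merely weak convergence of $\mathbb{D}^{\mathcal{Y},k,j}X$, the proposal has a genuine gap at its central step.
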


At first, we observe Lemma 3.4 in \cite{LEAO_OHASHI2013} holds for the $\mathbb{F}^k$-martingale $A^{k,j}$. Then, we have the following result.
\begin{lemma}\label{lfund1}
Let $H_\cdot = \mathbb{E}[ 1\!\!1_{G}|\mathcal{F}_\cdot]$ and $H^k_\cdot =\mathbb{E}[ 1\!\!1_{ G}|\mathcal{F}^k_\cdot]$ be positive and uniformly integrable martingales w.r.t filtrations $\mathbb{F}$ and $\mathbb{F}^k$, respectively, where $G\in \mathcal{F}_T$. Then, for each $1\le j\le d$,

$$\Bigg\|\int_0^\cdot H(s)dB^j_s -  \oint_0^\cdot H^k(s)dA^{k,j}(s)\Bigg\|_{\mathbf{B}^2}\rightarrow 0\quad \text{as}~k\rightarrow \infty.$$
\end{lemma}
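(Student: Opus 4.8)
The plan is to reduce the statement to the one-dimensional convergence result of \cite{LEAO_OHASHI2013} applied to each coordinate separately, after first upgrading the convergence of the integrands to a strong convergence. So I would fix $j\in\{1,\ldots,d\}$, suppress it from the notation, prove the one-dimensional statement, and then sum over $j$.

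First I would check that $H^k\to H$ strongly in $\mathbf{B}^2(\mathbb{F})$. Since $1\!\!1_{G}$ takes values in $[0,1]$, the martingales $H^k$ are uniformly bounded by $1$, so $\sup_{k\ge 1}\|H^k\|_{\mathbf{B}^2}<\infty$ and $\{\sup_{0\le t\le T}|H^k(t)|^2\}_{k\ge 1}$ is uniformly integrable. Because $\mathbb{F}^k\subset\mathbb{F}$ for every $k$ and $\mathbb{F}^k\to\mathbb{F}$ weakly over $[0,T]$ by \eqref{weakfiltration}, Theorem~1 in \cite{coquet1} (the same tool already used for $J^k_1$ in the proof of Lemma~\ref{deltaconvergence}) gives $\sup_{0\le t\le T}|H^k(t)-H(t)|\to 0$ in probability; combined with the uniform integrability this yields $H^k\to H$ in $\mathbf{B}^2(\mathbb{F})$, hence also in $L^2_a(\mathbb{P}\times Leb)$.

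Next I would prove the convergence of the integrals. Both $\int_0^\cdot H\,dB^j$ (an $\mathbb{F}$-martingale) and $\oint_0^\cdot H^k\,dA^{k,j}$ (an $\mathbb{F}^k$-martingale, well defined and square-integrable since $H^k$ is bounded and $A^{k,j}\in\mathbf{H}^2(\mathbb{F}^k)$) belong to $\mathbf{H}^2$, so by Doob's $L^2$-inequality it is enough to establish $L^2(\mathbb{P})$-convergence of the terminal values. Expanding the square of the difference and using the It\^o isometry together with the isometry for the optional integral against the purely discontinuous martingale $A^{k,j}$, I would split the terminal $L^2$-norm into the term $\mathbb{E}\int_0^T H(s)^2\,ds$; a term of the form $\mathbb{E}\int_0^T (H^k(s))^2\,d\langle A^{k,j},A^{k,j}\rangle(s)=\epsilon_k^2\,\mathbb{E}\int_0^T (H^k(s))^2 h^{k,j}(s)\,ds$ by Lemma~\ref{angleLemma}, plus a correction coming from the optional (non-predictable) part of the integrand; and a cross term that reduces to an expectation of an integral of $H^k$ against the covariation of $B^j$ and $A^{k,j}$. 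To pass to the limit I would use: the renewal estimates $[A^{k,j},A^{k,j}](t)=\epsilon_k^2 N^{k,j}(t)\to t$ and $\langle A^{k,j},A^{k,j}\rangle(t)\to t$, uniformly on $[0,T]$ in probability and in $L^1(\mathbb{P})$, which follow from \eqref{uniforTk}; the uniform pathwise bound $\sup_t|A^{k,j}(t)-B^j(t)|\le\epsilon_k$ of \eqref{akb}, which forces the covariation of $A^{k,j}$ and $B^j$ to tend to $t$; and the strong convergence $H^k\to H$ in $L^2_a(\mathbb{P}\times Leb)$ from the previous step. The three contributions then assemble into $\mathbb{E}\int_0^T H(s)^2\,ds$, so the square of the difference of the terminal values tends to $0$; this is exactly the multidimensional reading of Lemma~3.4 in \cite{LEAO_OHASHI2013}, whose argument applies verbatim to each $A^{k,j}$, and summing over $j=1,\ldots,d$ completes the proof.

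The hard part is this last step, and in particular the cross term, where one must control the interaction between the $\mathbb{F}$-It\^o integral $\int H\,dB^j$ and the $\mathbb{F}^k$-optional integral $\oint H^k\,dA^{k,j}$ — two martingales adapted to different filtrations, since $A^{k,j}$ is a martingale only for $\mathbb{F}^k$ — together with the correction term produced by the optionality of the integrand in $\oint H^k\,dA^{k,j}$. It is precisely the combination of the uniform pathwise approximation \eqref{akb}, the bracket convergence coming from \eqref{uniforTk} and Lemma~\ref{angleLemma}, and the weak convergence of filtrations that makes the cross term converge to $\mathbb{E}\int_0^T H(s)^2\,ds$; this is the technical core of Lemma~3.4 in \cite{LEAO_OHASHI2013}.
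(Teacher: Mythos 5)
Your proposal is correct and follows essentially the same route as the paper: the paper's proof consists precisely in observing that $A^{k,j}$ is a pure-jump square-integrable martingale (with $\mathbb{E}\sup_{0\le t\le T}|B^j(t)|^p<\infty$) so that Lemma 3.4 of \cite{LEAO_OHASHI2013} applies verbatim to each coordinate $j$, which is exactly your concluding reduction. The additional material you supply (strong $\mathbf{B}^2$-convergence of $H^k$ to $H$ via \eqref{weakfiltration} and \cite{coquet1}, the isometry/bracket expansion using Lemma \ref{angleLemma}, \eqref{akb} and \eqref{uniforTk}) is just an unpacking of the internals of that cited lemma rather than a different argument.
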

\begin{proof}
Since $A^{k,j}$ is a pure jump martingale and $\mathbb{E}\sup_{0\le t\le T}|B^j(t)|^p< \infty$ for every $p> 2$, then we shall apply Lemma 3.4 in \cite{LEAO_OHASHI2013} to conclude the proof.
\end{proof}

\begin{lemma}\label{lfund2}
Let $\{Y^k;k\ge 1\}$ be a sequence of $\mathbb{F}^k$-square-integrable martingales such that $\lim_{k\rightarrow \infty}Y^{k}=Z$ weakly in $\mathbf{B}^2(\mathbb{F})$, where $Z\in \mathbf{H}^2(\mathbb{F})$. Then, for each $1\le j\le d$,

\begin{equation}\label{conq}
\lim_{k\rightarrow \infty}[Y^{k},A^{k,j}](t) = [Z,B^j](t)\quad\text{weakly in}~L^1(\mathbb{P})
\end{equation}
for every $t\in [0,T]$.
\end{lemma}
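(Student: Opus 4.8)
The plan is to combine the previous two lemmas with a density argument. The key point is that products of bounded martingales of the form $H^k_\cdot = \E[1\!\!1_G|\mathcal F^k_\cdot]$, multiplied by deterministic constants, span a set that is dense (in the appropriate sense) in $\mathbf H^2(\mathbb F)$, and that the bracket operation $[\cdot,A^{k,j}]$ is continuous in a way compatible with weak $\mathbf B^2$-convergence. First I would recall the elementary identity for semimartingales: if $Y^k$ is an $\mathbb F^k$-martingale then $Y^k(t)A^{k,j}(t) - [Y^k,A^{k,j}](t)$ is an $\mathbb F^k$-local martingale, and since $A^{k,j}$ is bounded on $[0,T]$ by \eqref{akb} while $Y^k$ is square-integrable, this is in fact a genuine martingale with the product $Y^kA^{k,j} \in \mathbf B^1(\mathbb F^k)$. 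Hence testing $[Y^k,A^{k,j}](t)$ against a bounded random variable $\zeta$ reduces to understanding $\E[\zeta Y^k(t)A^{k,j}(t)]$ together with a martingale term that can be handled by conditioning.

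Second, I would treat the ``building block'' case first: suppose $Y^k = H^k$ with $H^k_\cdot = \E[1\!\!1_G|\mathcal F^k_\cdot]$ and correspondingly $Z = H$ with $H_\cdot = \E[1\!\!1_G|\mathcal F_\cdot]$. Here Lemma \ref{lfund1} gives $\oint_0^\cdot H^k dA^{k,j} \to \int_0^\cdot H dB^j$ strongly in $\mathbf B^2$. Writing the optional integral against the semimartingale $A^{k,j}$ as $\oint_0^\cdot H^k dA^{k,j} = H^k A^{k,j} - H^k(0)A^{k,j}(0) - \int_0^\cdot A^{k,j}(s-)dH^k(s) - [H^k,A^{k,j}]$, one isolates $[H^k,A^{k,j}]$ as a difference of terms each of which converges: the optional integral by Lemma \ref{lfund1}, the product $H^kA^{k,j}$ because $H^k \to H$ and $A^{k,j}\to B^j$ in a strong enough sense (using the uniform bound $\|A^{k,j}-B^j\|_\infty \le \epsilon_k$ and uniform integrability of the bounded martingales $H^k$, together with the weak filtration convergence $\mathbb F^k \to \mathbb F$ via Th.~1 in \cite{coquet1}), and the integral $\int A^{k,j}(s-)dH^k(s)$ by a similar argument. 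This identifies the limit of $[H^k,A^{k,j}]$ as $[H,B^j]$, at least weakly in $L^1(\mathbb P)$ for fixed $t$.

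Third, I would pass from building blocks to a general weak limit $Z$ by linearity and density. Finite linear combinations $\sum_i c_i \E[1\!\!1_{G_i}|\mathcal F_\cdot]$ are dense in $\mathbf H^2(\mathbb F)$ in the $\mathbf B^2$-norm (indicators are total in $L^2(\mathcal F_T)$ and closing martingales depend continuously on their terminal values by Doob's inequality); one approximates $Z$ by such a combination $Z^{(m)}$, builds the matching $\mathbb F^k$-martingales $Y^{k,(m)}$, and controls the three-way error between $[Y^k,A^{k,j}]$, $[Y^{k,(m)},A^{k,j}]$, $[Z^{(m)},B^j]$ and $[Z,B^j]$. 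The cross terms are estimated by Kunita--Watanabe / Cauchy--Schwarz: $\|[Y^k - Y^{k,(m)},A^{k,j}]\|_{L^1} \le \|Y^k - Y^{k,(m)}\|_{\mathbf B^2}\|A^{k,j}\|_{\mathbf B^2}$ and similarly on the limit side, so that uniformity in $k$ of $\|Y^k - Y^{k,(m)}\|_{\mathbf B^2}$ is what is needed.

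The main obstacle is precisely this last uniformity: a priori we only know $Y^k \to Z$ weakly in $\mathbf B^2$, which does not by itself give $\|Y^k - Y^{k,(m)}\|_{\mathbf B^2}$ small uniformly in $k$. I expect this is resolved by exploiting that the $Y^k$ are martingales, so $\|Y^k\|_{\mathbf B^2}$ is comparable (by Burkholder--Davis--Gundy) to $\|Y^k(T)\|_{L^2}$, and the weak $\mathbf B^2$-convergence forces $Y^k(T) \to Z(T)$ weakly in $L^2(\mathbb F_T)$; combined with the projection structure $Y^{k,(m)} = \E[\cdot\,|\mathcal F^k_\cdot]$ and weak lower semicontinuity of the norm, one can arrange $\limsup_k \|Y^k - Y^{k,(m)}\|_{\mathbf B^2} \to 0$ as $m\to\infty$, possibly after first reducing to the case where the convergence $Y^k(T)\to Z(T)$ is strong in $L^2$ by a further approximation (passing through a subsequence and a diagonal argument, or invoking that on bounded sets the relevant functionals are testable against a countable family). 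Making this quantitative and checking it interacts correctly with the weak-$L^1$ mode of convergence claimed in \eqref{conq} for each fixed $t$ is the delicate part; everything else is bookkeeping with the semimartingale integration-by-parts formula and the estimates already available from Lemmas \ref{lfund1} and \ref{angleLemma}.
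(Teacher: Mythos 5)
There is a genuine gap, and it is exactly the one you flag at the end: the density step cannot be closed. Your plan approximates $Z$ by finite combinations $Z^{(m)}$ of indicator-closing martingales, builds matching $\mathbb{F}^k$-martingales $Y^{k,(m)}$, and then needs $\limsup_{k}\|Y^k-Y^{k,(m)}\|_{\mathbf{B}^2}\to 0$ as $m\to\infty$ so that the Kunita--Watanabe bound $\mathbb{E}\big|[Y^k-Y^{k,(m)},A^{k,j}](t)\big|\le \|Y^k-Y^{k,(m)}\|_{\mathbf{B}^2}\big(\mathbb{E}[A^{k,j},A^{k,j}](t)\big)^{1/2}$ becomes small. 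But the hypothesis is only weak convergence of $Y^k$ in $\mathbf{B}^2(\mathbb{F})$, and the $Y^k$ are otherwise arbitrary $\mathbb{F}^k$-martingales: nothing ties $Y^k$ in norm to any sequence manufactured from $Z$. Concretely, take $Y^k=\mathbb{E}[Z(T)|\mathcal{F}^k_\cdot]+M^k$ with $M^k$ an $\mathbb{F}^k$-martingale of unit $\mathbf{B}^2$-norm converging weakly to zero; the lemma applies to this sequence with the same limit $Z$, yet $\|Y^k-Y^{k,(m)}\|_{\mathbf{B}^2}$ stays bounded away from zero uniformly in $m$, so your cross term is $O(1)$ and the three-way estimate never closes. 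The repairs you sketch (BDG, weak lower semicontinuity, upgrading $Y^k(T)\to Z(T)$ to strong $L^2$ convergence) cannot work for the same reason: weak convergence gives no control on the norm of the part of $Y^k$ that is ``orthogonal'' to $Z$, while the conclusion must hold for it anyway.

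The way out --- and this is the paper's route, which simply adapts the proof of Lemma 3.5 in \cite{LEAO_OHASHI2013} using Lemma \ref{lfund1} --- is to put the bounded-martingale approximation on the \emph{test functional}, not on $Z$. Fix $g\in L^\infty(\mathcal{F}_T)$ and set $H^k=\mathbb{E}[g|\mathcal{F}^k_\cdot]$, $H=\mathbb{E}[g|\mathcal{F}_\cdot]$. Since $[Y^k,A^{k,j}]$ is $\mathbb{F}^k$-optional with integrable variation, optional projection gives $\mathbb{E}\,g\,[Y^k,A^{k,j}](t)=\mathbb{E}\int_0^t H^k(s)\,d[Y^k,A^{k,j}](s)=\mathbb{E}\big[Y^k(t)\oint_0^t H^k\,dA^{k,j}\big]$, because the product of two $\mathbb{F}^k$-square-integrable martingales minus their bracket is a martingale and $[\oint H^k dA^{k,j},\,Y^k]=\int H^k\,d[A^{k,j},Y^k]$ for the pure-jump integrator $A^{k,j}$. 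Now Lemma \ref{lfund1} gives $\oint_0^\cdot H^k dA^{k,j}\to\int_0^\cdot H\,dB^j$ \emph{strongly} in $\mathbf{B}^2(\mathbb{F})$, the weakly convergent sequence $Y^k$ is norm bounded, and $X\mapsto\mathbb{E}[\xi X(t)]$ with $\xi\in L^2(\mathcal{F}_t)$ is a $\sigma(\mathbf{B}^2,\mathrm{M}^2)$-continuous functional (a single jump of size $\xi$ at time $t$ lies in the dual); hence $\mathbb{E}\big[Y^k(t)\oint_0^t H^k dA^{k,j}\big]\to\mathbb{E}\big[Z(t)\int_0^t H\, dB^j\big]=\mathbb{E}\,g\,[Z,B^j](t)$, which is \eqref{conq}. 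No density argument over $Z$ and no matching martingales $Y^{k,(m)}$ are needed; the only approximation is on the $L^\infty$ side, where Lemma \ref{lfund1} makes it uniform. Your building-block computation of $[H^k,A^{k,j}]$ via integration by parts and \eqref{akb} is reasonable as far as it goes, but it is aimed at the wrong object.
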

\begin{proof}
In the notation of the proof of Lemma 3.5 in~\cite{LEAO_OHASHI2013},we observe that since for every BMO $\mathbb{F}$-martingale $U$, we have $\lim_{k\rightarrow \infty}[Z^{k,X}, U](t)=[Z,U](t)$ weakly in $L^1(\mathbb{P})$ for every $t\in [0,T]$, then we shall take $W=B$. By using Lemma \ref{lfund1}, the proof of Lemma 3.5 in \cite{LEAO_OHASHI2013} works perfectly for the pure-jump sequence $\{Y^k; k\ge 1\}$ which allows us to conclude that (\ref{conq}) holds true.
\end{proof}
An immediate consequence of Lemma \ref{lfund2} is the following result.

\begin{corollary}\label{existenceCOV}
Let $\mathcal{Y} = \big((X^k)_{k\ge 1}; \mathscr{D}\big)$ be an imbedded discrete structure for a Wiener functional $X = X(0) + Y + V$ where $Y\in \mathbf{H}^2(\mathbb{F})$ and $V\in \mathbf{B}^2(\mathbb{F})$ is a continuous process. If $\mathcal{Y}$ is stable then $\langle X, B^j\rangle^\mathcal{Y} = [Y,B^j]$ exists for $j=1, \ldots, d$ and $\mathcal{E}^{2,\mathcal{Y}}(X)<\infty$.
\end{corollary}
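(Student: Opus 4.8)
The plan is to reduce everything to an application of Lemma \ref{lfund2}, together with the definition of stability and a routine Burkholder--Davis--Gundy estimate. First I would unpack the hypotheses: $\mathcal{Y} = \big((X^k)_{k\ge 1}, \mathscr{D}\big)$ is a stable imbedded discrete structure for $X = X(0) + Y + V$, so by Proposition \ref{repdelta} each $X^k$ carries its $\mathbb{F}^k$-special semimartingale decomposition $X^k = X^k(0) + M^{\mathcal{Y},k} + N^{\mathcal{Y},k}$, and stability means precisely $M^{\mathcal{Y},k}\to Y$ weakly in $\mathbf{B}^2(\mathbb{F})$ as $k\to+\infty$ (this is where we use that the martingale component of $X$ is exactly $Y$). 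Since each $M^{\mathcal{Y},k}$ is an $\mathbb{F}^k$-square-integrable martingale converging weakly in $\mathbf{B}^2(\mathbb{F})$ to $Y\in\mathbf{H}^2(\mathbb{F})$, the sequence $\{M^{\mathcal{Y},k}; k\ge 1\}$ satisfies exactly the hypotheses of Lemma \ref{lfund2} with $Y^k = M^{\mathcal{Y},k}$ and $Z = Y$.

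Next I would identify $[X^k, A^{k,j}]$ with $[M^{\mathcal{Y},k}, A^{k,j}]$. This is the observation already recorded in the Remark following the definition of stability, namely $\Delta X^k = \Delta M^{\mathcal{Y},k}$, because $N^{\mathcal{Y},k}$ is $\mathbb{F}^k$-predictable and absolutely continuous (hence continuous), so it contributes nothing to the jumps. Since $A^{k,j}$ is a pure-jump process, the quadratic covariation $[X^k, A^{k,j}] = \sum_{0<s\le\cdot}\Delta X^k(s)\Delta A^{k,j}(s) = \sum_{0<s\le\cdot}\Delta M^{\mathcal{Y},k}(s)\Delta A^{k,j}(s) = [M^{\mathcal{Y},k}, A^{k,j}]$. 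Therefore Lemma \ref{lfund2} gives, for each $t\in[0,T]$ and each $1\le j\le d$,
$$\lim_{k\rightarrow\infty}[X^k, A^{k,j}](t) = \lim_{k\rightarrow\infty}[M^{\mathcal{Y},k}, A^{k,j}](t) = [Y, B^j](t)\quad\text{weakly in }L^1(\mathbb{P}),$$
which is exactly the statement that $\langle X, B^j\rangle^{\mathcal{Y}}$ exists (in the sense of Definition \ref{deltacov}) and equals $[Y, B^j]$.

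Finally, for the finite-energy claim $\mathcal{E}^{2,\mathcal{Y}}(X)<\infty$: again using $\Delta X^k = \Delta M^{\mathcal{Y},k}$, we have $\sum_{n\ge1}|\Delta X^k(T^k_n)|^2\mathds{1}_{\{T^k_n\le T\}} = \sum_{0<s\le T}|\Delta M^{\mathcal{Y},k}(s)|^2 \le [M^{\mathcal{Y},k}, M^{\mathcal{Y},k}](T)$, so by the Burkholder--Davis--Gundy inequality $\mathbb{E}\,[M^{\mathcal{Y},k}, M^{\mathcal{Y},k}](T) \le C\,\mathbb{E}\sup_{0\le t\le T}|M^{\mathcal{Y},k}(t)|^2$. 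The weak convergence $M^{\mathcal{Y},k}\to Y$ in $\mathbf{B}^2(\mathbb{F})$ forces $\sup_{k}\mathbb{E}\sup_{0\le t\le T}|M^{\mathcal{Y},k}(t)|^2<\infty$ (weakly convergent sequences in a normed space are norm-bounded, and the $\mathbf{B}^2$-norm is exactly $(\mathbb{E}\sup_t|\cdot|^2)^{1/2}$), so $\mathcal{E}^{2,\mathcal{Y}}(X)<\infty$. The only genuinely delicate point is verifying that Lemma \ref{lfund2} is legitimately applicable --- i.e. that stability really does deliver a sequence of $\mathbb{F}^k$-martingales converging weakly in $\mathbf{B}^2(\mathbb{F})$ --- but this is immediate from the definitions; the rest is bookkeeping, so I expect no serious obstacle.
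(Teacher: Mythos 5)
Your proposal is correct and follows essentially the same route as the paper: identify $[X^k,A^{k,j}]=[M^{\mathcal{Y},k},A^{k,j}]$ since the compensator is continuous, apply Lemma \ref{lfund2} to the stable martingale components to get $\langle X,B^j\rangle^{\mathcal{Y}}=[Y,B^j]$, and deduce $\mathcal{E}^{2,\mathcal{Y}}(X)<\infty$ from norm-boundedness of the weakly convergent sequence $\{M^{\mathcal{Y},k}\}$ together with Burkholder--Davis--Gundy. The only difference is that you spell out the jump identification $\Delta X^k=\Delta M^{\mathcal{Y},k}$, which the paper leaves implicit (it appears in the remark after the definition of stability).
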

\begin{proof}
Let $X^k = X^k(0) + M^{\mathcal{Y},k}+ N^{\mathcal{Y},k}$ be the $\mathbb{F}^k$-special semimartingale decomposition of $X^k$. If $\lim_{k\rightarrow+\infty}M^{\mathcal{Y},k}$ converges weakly in $\mathbf{B}^2(\mathbb{F})$ then it is bounded in the strong norm of $\mathbf{B}^{2}(\mathbb{F})$ and Burkholder-Davis-Gundy's inequality yields that $\sup_{k\ge 1}\mathbb{E}[M^{\mathcal{Y},k},M^{\mathcal{Y},k}](T)= \mathcal{E}^{2,\mathcal{Y}}(X) < \infty$. By applying Lemma \ref{lfund2} jointly with the predictable martingale representation theorem of the Brownian motion, we conclude $\langle X, B^j\rangle^\mathcal{Y} = [Y,B^j]$ for $j=1, \ldots, d$.
\end{proof}

We are now ready to prove Theorem \ref{towardsD1}:

\

\noindent \textbf{Proof of Theorem \ref{towardsD1}:} Throughout this proof, $C$ is a constant which may defer from line to line. In the sequel, we fix an arbitrary stable imbedded discrete structure $\mathcal{Y} = \big((X^k)_{k\ge 1}, \mathscr{D}\big)$ associated with $X\in \mathbf{B}^2(\mathbb{F})$ having a representation

$$X = X(0) + M + N$$
where $M = \sum_{j=1}^d \int H_jdB^j\in \mathbf{H}^2(\mathbb{F})$ and $N\in \mathbf{B}^2(\mathbb{F})$ is a continuous process. We claim that

$$
\mathcal{D}^\mathcal{Y}X = H.
$$

We start by observing that the stability of $\mathcal{Y}$ implies that
$$M^{\mathcal{Y},k} = \sum_{j=1}^d \oint_0 \mathbb{D}^{\mathcal{Y},k,j}XdA^{k,j}(s); k\ge 1$$
is a bounded in $\mathbf{B}^2(\mathbb{F})$ which is equivalent to $\sup_{k\ge 1}\mathbb{E}[M^{\mathcal{Y},k},M^{\mathcal{Y},k}](T)<\infty$. Moreover,

\begin{eqnarray}
\nonumber\mathbb{E}\int_0^T\|\mathbb{D}^{\mathcal{Y},k}X(s) \|^2_{\mathbb{R}^d}ds &=& \mathbb{E}[M^{\mathcal{Y},k},M^{\mathcal{Y},k}](T)\\
\label{enerA}&-&\mathbb{E}\sum_{j=1}^d\sum_{n=1}^\infty
|\mathbb{D}^{\mathcal{Y},k,j}X (T^{k,j}_n)|^2 ( T^{k,j}_{n+1} - T)1\!\!1_{\{T^{k,j}_n\le T < T^{k,j}_{n+1}\} },
\end{eqnarray}
for every $k\ge 1$. Therefore, $\sup_{k\ge 1}\mathbb{E}\int_0^T\|\mathbb{D}^{\mathcal{Y},k}X(s) \|^2_{\mathbb{R}^d}ds< \infty$.


Let us fix $g\in L^\infty$, $t\in [0,T]$ and $j=1, \ldots, d$. By the very definition, we have

\begin{eqnarray*}
\nonumber\mathbb{E}g\int_0^t\mathbb{D}^{\mathcal{Y},k,j}X(s)ds &=&\mathbb{E}g\sum_{n=1}^\infty\mathbb{D}^{\mathcal{Y},k,j}X(T^{k,j}_{n-1})\Delta T^{k,j}_n 1\!\!1_{\{T^{k,j}_{n-1}\le t\}}\\
\nonumber& &\\
\nonumber&-&\mathbb{E}g\sum_{n=1}^\infty\mathbb{D}^{k,j}X(T^{k,j}_{n-1})(T^{k,j}_n - t)1\!\!1_{\{T^{k,j}_{n-1}< t\le T^{k,j}_n\}}\\
\nonumber& &\\
&=:& I^{k,j,1}(t) + I^{k,j,2}(t).
\end{eqnarray*}
By the strong Markov property, $\Delta T^{k,j}_n$ is independent from $\mathcal{F}^{k}_{T^{k,j}_{n-1}}$ and the Brownian scaling yields $\mathbb{E} \Delta T^{k,j}_n  = \epsilon^2_k$ for every $n\ge 1$. Then, we shall estimate

$$ |I^{k,j,2}(t)| \le C \epsilon_k \sum_{n=1}^{\infty}\mathbb{E} |\Delta X^k (T^{k,j}_{n-1}) |1\!\!1_{\{T^{k,j}_{n-1}\le t < T^{k,j}_n\}}\rightarrow 0$$
as $k\rightarrow \infty$. We claim
\begin{small}
\begin{eqnarray}
\nonumber\mathbb{E}\Bigg[ g\sum_{n=1}^\infty\mathbb{D}^{\mathcal{Y},k,j}X(T^{k,j}_{n-1})\Delta T^{k,j}_n 1\!\!1_{\{T^{k,j}_{n-1}\le t\}}\Bigg] &=&\mathbb{E}\Bigg[\sum_{n=2}^\infty g^{k,j}_n \mathbb{D}^{\mathcal{Y},k,j}X(T^{k,j}_{n-1})\Delta T^{k,j}_n1\!\!1_{\{T^k_{n-1}\le t\}}\Bigg]\\
\nonumber& &\\
\label{id}&+&\mathbb{E}\Bigg[ g \sum_{n=1}^\infty \Delta X^k(T^{k,j}_{n-1})\Delta A^{k,j}(T^{k,j}_{n-1})1\!\!1_{\{T^{k,j}_{n-1}\le t\}}\Bigg],
\end{eqnarray}
\end{small}
where $g^{k,j}_n:=\mathbb{E}[g|\mathcal{F}^k_{T^{k,j}_n}] -\mathbb{E}[g|\mathcal{F}^k_{T^{k,j}_{n-1}}]; n\ge 1$.
Indeed, for each $n,k\ge 1$
\begin{small}
\begin{eqnarray*}
\int_{\{T^{k,j}_{n-1}\le t\}}g\mathbb{D}^{\mathcal{Y},k,j} X (T^{k,j}_{n-1}) \Delta T^{k,j}_n d\mathbb{P}&=& \int_{\{T^{k,j}_{n-1}\le t\}}\mathbb{E}[g|\mathcal{F}^k_{T^{k,j}_n}]\mathbb{D}^{\mathcal{Y},k,j}X (T^{k,j}_{n-1}) \Delta T^{k,j}_nd\mathbb{P}\\
& &\\
&-& \int_{\{T^{k,j}_{n-1}\le t\}}\mathbb{E}[g|\mathcal{F}^k_{T^{k,j}_{n-1}}]\mathbb{D}^{\mathcal{Y},k,j}X(T^{k,j}_{n-1}) \Delta T^{k,j}_n d\mathbb{P}\\
& &\\
&+&\int_{\{T^{k,j}_{n-1}\le t\}}\mathbb{E}[g|\mathcal{F}^k_{T^{k,j}_{n-1}}]\mathbb{D}^{\mathcal{Y},k,j}X(T^{k,j}_{n-1}) \Delta T^{k,j}_nd\mathbb{P}.
\end{eqnarray*}
\end{small}

Since $|\Delta A^{k,j}(T^{k,j}_{n-1})|^2 = \epsilon^2_k$ a.s for every $n\ge 2$, we have
\begin{small}
\begin{eqnarray*}
\int_{\{T^{k,j}_{n-1}\le t\}}\mathbb{E}[g|\mathcal{F}^{k,j}_{n-1}]\mathbb{D}^{\mathcal{Y},k,j}X(T^{k,j}_{n-1}) \Delta T^{k,j}_nd\mathbb{P}&=&\int_{\{T^{k,j}_{n-1}\le t\}}\mathbb{E}[g|\mathcal{F}^k_{T^{k,j}_{n-1}}]\mathbb{D}^{\mathcal{Y},k,j}X(T^{k,j}_{n-1}) \epsilon^2_k d\mathbb{P}\\
& &\\
&=& \int_{\{T^{k,j}_{n-1}\le t\}}\mathbb{E}[g|\mathcal{F}^k_{T^{k,j}_{n-1}}]\Delta X^k(T^{k,j}_{n-1})\Delta A^{k,j}(T^{k,j}_{n-1})d\mathbb{P}\\
& &\\
&=&\int_{\{T^{k,j}_{n-1}\le t\}}g\Delta X^k(T^{k,j}_{n-1})\Delta A^{k,j}(T^{k,j}_{n-1})d\mathbb{P}.
\end{eqnarray*}
\end{small}
Then,
\begin{small}
\begin{eqnarray*}
\int_{\{T^{k,j}_{n-1}\le t\}}g\mathbb{D}^{\mathcal{Y},k,j}X(T^{k,j}_{n-1}) \Delta T^{k,j}_n d\mathbb{P}&=&\int_{\{T^{k,j}_{n-1}\le t\}}g^{k,j}_n\mathbb{D}X^{\mathcal{Y},k,j}(T^{k,j}_{n-1}) \Delta T^{k,j}_nd\mathbb{P}\\
& &\\
&+&\int_{\{T^{k,j}_{n-1}\le t\}}g\Delta X^k(T^{k,j}_{n-1})\Delta A^{k,j}(T^{k,j}_{n-1})d\mathbb{P}.
\end{eqnarray*}
\end{small}
This shows that (\ref{id}) holds. We shall write

\begin{eqnarray*}
\nonumber I^{k,j,1}(t)&=& \mathbb{E}\sum_{n=1}^\infty g^{k,j}_n\mathbb{D}^{\mathcal{Y},k,j}X(T^{k,j}_{n-1})\Delta T^{k,j}_n1\!\!1_{\{T^{k,j}_{n-1}\le t\}}\\
\nonumber& &\\
\nonumber&+& \mathbb{E}g\sum_{n=1}^\infty\Delta X^k(T^{k,j}_{n-1})\Delta A^{k,j}(T^{k,j}_{n-1})1\!\!1_{\{T^{k,j}_{n-1}\le t\}}\\
\nonumber& &\\
&=:&I^{k,j,1}_1(t) + I^{k,j,1}_2(t).
\end{eqnarray*}
Let us write $\Delta T^{k,j}_n =(\Delta T^{k,j}_n)^{1/2}(\Delta T^{k,j}_n)^{1/2}; n\ge 1$. Then, Cauchy-Schwartz inequality, the finite energy property $\mathcal{E}^{2,\mathcal{Y}}(X)< \infty$ of $\mathcal{Y}$ and Lemma \ref{gknlemma} (see Appendix) yield

$$|I^{k,j,1}_1(t)|\le \Big(\mathbb{E}\sum_{n=1}^\infty |g^{k,j}_n|^2 \Delta T^{k,j}_n 1\!\!1_{\{T^{k,j}_{n-1}\le T\}} \Big)^{1/2}$$
$$\times \Big(\mathbb{E}\sum_{n=1}^\infty |\mathbb{D}^{\mathcal{Y},k,j}X(T^{k,j}_{n-1})|^2 \Delta T^{k,j}_{n}1\!\!1_{\{T^{k,j}_{n-1}\le T\}}\Big)^{1/2}$$
$$\le \Big(T\mathbb{E}\sup_{n\ge 1}|g^{k,j}_n|^2 1\!\!1_{\{T^{k,j}_{n-1}\le T\}}\Big)^{1/2} \Big(\mathcal{E}^{2,\mathcal{Y}}(X)\Big)^{1/2}\rightarrow 0$$
as $k\rightarrow \infty$. Therefore, we arrive at the following conclusion

\begin{equation}\label{equiderdelta}
\lim_{k\rightarrow\infty}\mathbb{E}g\int_0^t\mathbb{D}^{\mathcal{Y},k,j}X(s)ds\quad\text{exists}\Longleftrightarrow \lim_{k\rightarrow \infty}\mathbb{E}g[X^k,A^{k,j}](t)
\end{equation}
exists for $j=1,\ldots,d$ and, in this case,

\begin{equation}\label{equiderdelta0}
\lim_{k\rightarrow\infty}\mathbb{E}g\int_0^t\mathbb{D}^{\mathcal{Y},k,j}X(s)ds=\lim_{k\rightarrow \infty}\mathbb{E}g[X^k,A^{k,j}](t);~j=1,\ldots,d.
\end{equation}
By Corollary \ref{existenceCOV} and the stability of $\mathcal{Y}$, we have

\begin{eqnarray}
\nonumber\lim_{k\rightarrow\infty}\mathbb{E}g\int_0^t\mathbb{D}^{\mathcal{Y},k,j}X(s)ds&=&\lim_{k\rightarrow \infty}\mathbb{E}g[X^k,A^{k,j}](t)\\
\label{equiderdelta1}& &\\
\nonumber&=& \mathbb{E}g[M,B^j](t) = \mathbb{E}g\int_0^t H_j(s)dB^j(s),
\end{eqnarray}
for $j=1,\ldots,d$. Since $\mathcal{Y}$, $t\in [0,T]$ and $g\in L^\infty(\mathbb{P})$ are arbitrary and $\{\mathbb{D}^{\mathcal{Y},k,j}X; k\ge 1\}$ is $L^2_a(\mathbb{P}\times Leb)$-weakly relatively compact for each $j=1,\ldots, d$ and $\mathcal{Y}$, we then conclude that

$$
\mathcal{D}^{\mathcal{Y}}X = H
$$
for every stable discrete structure $\mathcal{Y}$ associated with $X$. This concludes the proof.

\

A closer look at the proof of Theorem \ref{towardsD1} yields the following result.
\begin{corollary}\label{equivDS}
Let $X= X(0) + \sum_{j=1}^d\int H_jdB^j + V$ be an $\mathbb{F}$-adapted process such that $H_j\in L^2_a(\mathbb{P}\times Leb); j=1,\ldots, d$ and $V\in \mathbf{B}^2(\mathbb{F})$ has continuous paths. Then, $\mathcal{Y}$ is a stable imbedded discrete structure for $X$ if, and only if, $\mathcal{Y}$ has finite energy and $\mathcal{D}^{\mathcal{Y}}X$ exists.
\end{corollary}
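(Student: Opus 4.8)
The statement is an ``if and only if'' whose two halves have very different difficulty. The plan is to dispatch the implication ``stable $\Rightarrow$ finite energy and $\mathcal{D}^{\mathcal{Y}}X$ exists'' as a one-line corollary of Theorem~\ref{towardsD1} together with the Burkholder--Davis--Gundy remark recorded after the definition of stability, and then to obtain the converse by revisiting the estimates in the proof of Theorem~\ref{towardsD1}. Throughout I would write $X^k=X^k(0)+M^{\mathcal{Y},k}+N^{\mathcal{Y},k}$ for the $\mathbb{F}^k$-special semimartingale decomposition of Proposition~\ref{repdelta}.

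For the forward implication, if $\mathcal{Y}$ is stable then $M^{\mathcal{Y},k}\to M$ weakly in $\mathbf{B}^2(\mathbb{F})$, so $(M^{\mathcal{Y},k})_k$ is bounded in the strong norm of $\mathbf{B}^2(\mathbb{F})$; since $\Delta X^k=\Delta M^{\mathcal{Y},k}$, the Burkholder--Davis--Gundy inequality gives $\mathcal{E}^{2,\mathcal{Y}}(X)=\sup_k\mathbb{E}[M^{\mathcal{Y},k},M^{\mathcal{Y},k}](T)<\infty$, exactly as in that Remark, and the existence of $\mathcal{D}^{\mathcal{Y}}X$ (indeed $\mathcal{D}^{\mathcal{Y}}X=H$) is precisely the conclusion of Theorem~\ref{towardsD1}. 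So this direction needs no new work.

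For the converse I would assume $\mathcal{E}^{2,\mathcal{Y}}(X)<\infty$ and that $\mathcal{D}^{\mathcal{Y}}X$ exists, and proceed in three steps. First, the estimates in the proof of Theorem~\ref{towardsD1} bounding $I^{k,j,2}$ and $I^{k,j,1}_1$ use \emph{only} finite energy (and the general Lemma~\ref{gknlemma}), so the equivalence~\eqref{equiderdelta}--\eqref{equiderdelta0} is available; combined with $[X^k,A^{k,j}]=[M^{\mathcal{Y},k},A^{k,j}]$ (valid because $N^{\mathcal{Y},k}$ is absolutely continuous) and the weak $L^2_a$-convergence $\mathbb{D}^{\mathcal{Y},k,j}X\to\mathcal{D}^{\mathcal{Y}}_jX$ (tested against the optional projection of $g\mathds{1}_{[0,t]}$), this yields
\[
\lim_{k\to\infty}\mathbb{E}\,g\,[M^{\mathcal{Y},k},A^{k,j}](t)=\mathbb{E}\,g\int_0^t\mathcal{D}^{\mathcal{Y}}_jX(s)\,ds,\qquad g\in L^\infty(\mathbb{P}),\ t\in[0,T],
\]
and since $\sup_k\mathbb{E}\,|[X^k,A^{k,j}](t)|^2<\infty$ by Kunita--Watanabe, finite energy and square-integrability of $A^{k,j}$, the family $\{[X^k,A^{k,j}](t)\}_k$ is uniformly integrable, whence $[X^k,A^{k,j}](t)\to\int_0^t\mathcal{D}^{\mathcal{Y}}_jX(s)\,ds$ weakly in $L^1(\mathbb{P})$ (so $\langle X,B^j\rangle^{\mathcal{Y}}$ exists). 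Second, $\mathcal{E}^{2,\mathcal{Y}}(X)<\infty$ is equivalent, via the Remark and~\eqref{enerA}, to $\sup_k\mathbb{E}\int_0^T\|\mathbb{D}^{\mathcal{Y},k}X(s)\|_{\mathbb{R}^d}^2\,ds<\infty$, so $(M^{\mathcal{Y},k})_k$ is bounded in $\mathbf{B}^2(\mathbb{F})$ and, by the reflexivity/Diestel argument of Lemma~\ref{deltaconvergence}, weakly relatively compact there; any weak limit point $Z$ lies in $\mathbf{H}^2(\mathbb{F})$ because weak $\mathbf{B}^2$-limits of $\mathbb{F}^k$-martingales are $\mathbb{F}$-martingales ($\mathbb{F}^k\subset\mathbb{F}$ and~\eqref{weakfiltration}), and Lemma~\ref{lfund2} along the relevant subsequence gives $[Z,B^j](t)=\int_0^t\mathcal{D}^{\mathcal{Y}}_jX(s)\,ds$ for all $t,j$. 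Third, the predictable martingale representation theorem forces $Z=\sum_{j=1}^d\int\mathcal{D}^{\mathcal{Y}}_jX\,dB^j$ irrespective of the subsequence, hence $M^{\mathcal{Y},k}\to\sum_{j}\int\mathcal{D}^{\mathcal{Y}}_jX\,dB^j$ weakly in $\mathbf{B}^2(\mathbb{F})$, and identifying $\mathcal{D}^{\mathcal{Y}}_jX$ with $H_j$ gives $M^{\mathcal{Y},k}\to\sum_j\int H_j\,dB^j=M$, i.e.\ $\mathcal{Y}$ is stable.

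The hard part, and the only genuinely non-routine point, is the final identification in step three: a weak limit point of $(M^{\mathcal{Y},k})_k$ is a priori pinned down only through its covariations with the $B^j$, and one must know that $V$ carries no martingale mass — that is, that the prescribed decomposition $X=X(0)+\sum_j\int H_j\,dB^j+V$ is the canonical one — so that $\mathcal{D}^{\mathcal{Y}}X$, once it exists, is forced to equal $H$; this is exactly the normalisation already invoked to make $\mathcal{D}X$ well defined after Theorem~\ref{towardsD1}. A secondary, more technical, item is the stability of the martingale property under weak $\mathbf{B}^2$-limits along $\mathbb{F}^k\uparrow\mathbb{F}$, which rests on $\mathbb{F}^k\subset\mathbb{F}$ and~\eqref{weakfiltration} exactly as in the reasoning preceding Corollary~\ref{existenceCOV}.
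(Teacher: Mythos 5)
Your proposal is correct and follows essentially the same route as the paper: the paper's proof is just a pointer to (\ref{equiderdelta}), (\ref{equiderdelta0}), (\ref{equiderdelta1}) and (\ref{enerA}), and you reconstruct exactly that argument --- the forward implication from Theorem \ref{towardsD1} plus the Burkholder--Davis--Gundy remark, and the converse by noting that the estimates behind (\ref{equiderdelta})--(\ref{equiderdelta0}) use only finite energy, then combining (\ref{enerA})-boundedness, weak relative compactness of $M^{\mathcal{Y},k}$, Lemma \ref{lfund2} and the predictable representation theorem to identify the unique weak limit. The caveat you raise at the end (that the given decomposition must be the canonical one, i.e.\ $V$ carries no martingale part, in order to force $\mathcal{D}^{\mathcal{Y}}X=H$ and hence $M^{\mathcal{Y},k}\rightarrow M$) is an ambiguity already present in the paper's definition of stability and in the statement of Theorem \ref{towardsD1}, so it is not a gap of your argument relative to the paper's own one-line proof.
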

\begin{proof}
Just observe (\ref{equiderdelta}), (\ref{equiderdelta0}), (\ref{equiderdelta1}) and (\ref{enerA}).
\end{proof}
In view of Theorem \ref{towardsD1} and Corollary \ref{equivDS}, it is natural to arrive at the following definition:

\begin{definition}\label{defweakder}
Let $X= X(0) + \sum_{j=1}^d\int H_jdB^j + V$ be an $\mathbb{F}$-adapted process such that $H_j\in L^2_a(\mathbb{P}\times Leb); j=1,\ldots, d$ and $V\in \mathbf{B}^2(\mathbb{F})$ has continuous paths. We say that $X$ is \textbf{weakly differentiable} if there exists a finite energy imbedded discrete structure $\mathcal{Y} =\big((X^k)_{k\ge 1},\mathscr{D}\big)$ such that $\mathcal{D}^{\mathcal{Y}}X$ exists. The space of weakly differentiable processes will be denoted by $\mathcal{W}(\mathbb{F})$.
\end{definition}

Let $X\in \mathcal{W}(\mathbb{F})$ with a decomposition

\begin{equation}\label{derdecDEF}
X = X(0)+ \sum_{j=1}^d \int H_j dB^j + V
\end{equation}
where $H = (H_1, \ldots, H_d), H_j\in L^2_a(\mathbb{P}\times Leb); j=1,\ldots, d$ and $V\in \mathbf{B}^2(\mathbb{F})$ is a continuous process. Then, Theorem \ref{towardsD1} allows us to define

\begin{equation}\label{defDERIVATIVE}
\mathcal{D}X := \mathcal{D}^\mathcal{Y}X
\end{equation}
for every stable imbedded discrete structure $\mathcal{Y} =\big((X^k)_{k\ge 1},\mathscr{D}\big)$ w.r.t $X$ and, in this case, $\mathcal{D}X = H$. The weak differentiability notion requires existence of $\mathcal{D}^\mathcal{Y}X$ for a finite energy imbedded discrete structure $\mathcal{Y} = \big((X^k)_{k\ge 1},\mathscr{D}\big)$ and, from Theorem \ref{towardsD1}, this concept of derivative does not depend on the choice of the stable structure $\mathcal{Y}$.

\begin{remark}\label{whystability}
It is important to observe that for a given $X$ of the form (\ref{derdecDEF}), we cannot expect that $\mathcal{D}^\mathcal{Y}X=H$ holds for \textit{every} $\mathcal{Y}$ because there are imbedded discrete structures

$$X^k = X^k(0) + M^{\mathcal{Y},k} + N^{\mathcal{Y},k}$$
such that $X^k\rightarrow X$ weakly in $\mathbf{B}^2(\mathbb{F})$ but $\{M^{\mathcal{Y},k}; k\ge 1\}$ fails to converge to the martingale component of $X$. In this case, because of $[X^k,A^{k,j}] = [M^{\mathcal{Y},k},A^{k,j}]$, Lemma \ref{lfund2} and (\ref{equiderdelta}), $\mathcal{D}^\mathcal{Y}X$ may not even exists or it will not coincide with $H$. This type of phenomena is well-known in time-deterministic discretizations of filtrations. See e.g \cite{memin,coquet2} and other references therein. This is the reason why we restrict the computation of the weak derivative $\mathcal{D}X$ to stable structures and this is the best one might expect.
\end{remark}

\begin{remark}
Perhaps, the simplest class of examples which are not in $\mathcal{W}(\mathbb{F})$ is given by Brownian motion transformations with low regularity

$$f(B),\quad f\in H^{1,q}(\mathbb{R})$$
where $H^{1,q}(\mathbb{R})$ is the Sobolev space with degree of integrability $1\le q < 2$.
\end{remark}

A closer look at the proof of Theorem \ref{towardsD1} gives the following useful criteria to compute the derivative.

\begin{proposition}\label{criteriaSTABLE}
If $X\in \mathcal{W}(\mathbb{F})$ is associated with a stable imbedded discrete structure $\mathcal{Y}$, then $X$ has $\mathcal{Y}$-covariations $\langle X, B^j\rangle^\mathcal{Y}; j=1,\ldots,d$ as absolutely continuous processes and

$$\mathcal{D}_jX = \frac{d \langle X, B^j\rangle^\mathcal{Y}}{dt}; j=1,\ldots, d.$$
\end{proposition}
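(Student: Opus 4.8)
The plan is to simply unpack the identities already established inside the proof of Theorem \ref{towardsD1}. Let $X \in \mathcal{W}(\mathbb{F})$ with decomposition $X = X(0) + \sum_{j=1}^d \int H_j dB^j + V$, and let $\mathcal{Y} = \big((X^k)_{k\ge 1},\mathscr{D}\big)$ be a stable imbedded discrete structure for $X$. First I would invoke Corollary \ref{existenceCOV}: stability of $\mathcal{Y}$ gives that the $\mathcal{Y}$-covariations $\langle X, B^j\rangle^\mathcal{Y}$ exist for $j=1,\ldots,d$, and in fact $\langle X, B^j\rangle^\mathcal{Y} = [M, B^j]$, where $M = \sum_{j=1}^d \int H_j dB^j$ is the martingale component of $X$. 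Since $[M,B^j](t) = \int_0^t H_j(s)\,ds$ by the standard rules for covariation of Itô integrals against Brownian motion, each $\langle X, B^j\rangle^\mathcal{Y}$ is absolutely continuous with density $H_j$, i.e.
$$
\langle X, B^j\rangle^\mathcal{Y}(t) = \int_0^t H_j(s)\,ds,\quad 0\le t\le T,\ j=1,\ldots,d.
$$

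Next I would combine this with Theorem \ref{towardsD1}, which asserts that $\mathcal{D}^\mathcal{Y}X = H$ for every stable structure $\mathcal{Y}$. Hence $\mathcal{D}_jX = H_j$ as an element of $L^2_a(\mathbb{P}\times Leb)$, and by the previous display $H_j = \tfrac{d}{dt}\langle X, B^j\rangle^\mathcal{Y}$ up to $Leb\times\mathbb{P}$-null sets. This yields the claimed formula
$$
\mathcal{D}_jX = \frac{d\langle X, B^j\rangle^\mathcal{Y}}{dt};\quad j=1,\ldots,d.
$$

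There is really no serious obstacle here; the content of the proposition is a bookkeeping consequence of results already proved. The only point requiring a word of care is that the equality $\mathcal{D}_jX = H_j$ and the identity $\langle X,B^j\rangle^\mathcal{Y} = [M,B^j]$ both hold only modulo null sets, so one should phrase the final conclusion as an equality in $L^2_a(\mathbb{P}\times Leb)$ (equivalently, $Leb\times\mathbb{P}$-a.e.), consistently with the conventions fixed in the Notation subsection. A secondary minor point is making explicit that $[M,B^j](t) = \int_0^t H_j\,ds$ when $M = \sum_{i} \int H_i\,dB^i$ and $B^1,\ldots,B^d$ are independent Brownian motions — this is classical Itô calculus and can be stated without proof. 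Thus the proof is short: cite Corollary \ref{existenceCOV} for existence and the representation of the covariation, read off absolute continuity and its density, and then apply Theorem \ref{towardsD1} to identify that density with $\mathcal{D}_jX$.
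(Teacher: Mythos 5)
Your proposal is correct and follows essentially the same route as the paper: the paper's own proof just points to the identities (\ref{equiderdelta}), (\ref{equiderdelta0}), (\ref{equiderdelta1}) from the proof of Theorem \ref{towardsD1} together with Lemma \ref{lfund2}, which amounts precisely to your combination of Corollary \ref{existenceCOV} (stability gives $\langle X,B^j\rangle^{\mathcal{Y}}=[M,B^j]=\int_0^\cdot H_j\,ds$) with Theorem \ref{towardsD1} ($\mathcal{D}_jX=H_j$). Your remark that the identification holds only up to $Leb\times\mathbb{P}$-null sets is a fair point of care but does not change the argument.
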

\begin{proof}
Just observe (\ref{equiderdelta}), (\ref{equiderdelta0}), (\ref{equiderdelta1}) and use Lemma \ref{lfund2}.
\end{proof}

\begin{proposition}\label{diffform}
If $X\in \mathcal{W}(\mathbb{F})$ is a weakly differentiable process with a decomposition

$$X(t) = X(0) + \sum_{j=1}^d\int_0^t\mathcal{D}_jX(s)dB^j(s) + V_X(t); 0\le t\le T,$$
then $V_X$ can be described by

\begin{equation}\label{allort}
V_X(\cdot)  = \lim_{k\rightarrow+\infty}\sum_{j=1}^d\int_0^\cdot\mathbb{U}^{\mathcal{Y},k,j}X(s)ds
\end{equation}
weakly in $\mathbf{B}^2(\mathbb{F})$ as $k\rightarrow +\infty$ for every stable imbedded discrete structure $\mathcal{Y}$ associated with $X$.
\end{proposition}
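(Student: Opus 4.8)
The plan is to obtain \eqref{allort} by a direct rearrangement of the $\mathbb{F}^k$-special semimartingale decomposition furnished by Proposition~\ref{repdelta}, combined with Theorem~\ref{towardsD1} and the definition of stability; no new estimate is needed, only the linearity of weak limits.

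First I would fix an arbitrary stable imbedded discrete structure $\mathcal{Y} = \big((X^k)_{k\ge 1},\mathscr{D}\big)$ for $X$ and, by Proposition~\ref{repdelta}, record for each $k\ge 1$ the decomposition
$$
X^k = X^k(0) + M^{\mathcal{Y},k} + N^{\mathcal{Y},k},\qquad N^{\mathcal{Y},k}(\cdot) = \sum_{j=1}^d\int_0^\cdot \mathbb{U}^{\mathcal{Y},k,j}X(s)\,ds,
$$
where $M^{\mathcal{Y},k} = \sum_{j=1}^d \oint_0^\cdot \mathbb{D}^{\mathcal{Y},k,j}X\,dA^{k,j}$ is an $\mathbb{F}^k$-square-integrable martingale starting from zero and $N^{\mathcal{Y},k}$ is the $\mathbb{F}^k$-predictable absolutely continuous drift. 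On the other hand, since $X\in\mathcal{W}(\mathbb{F})$ is decomposed as $X = X(0) + \sum_{j=1}^d\int\mathcal{D}_jX\,dB^j + V_X$, Theorem~\ref{towardsD1} identifies the martingale component of $X$ with $M := \sum_{j=1}^d\int_0^\cdot\mathcal{D}_jX(s)\,dB^j(s)\in\mathbf{H}^2(\mathbb{F})$, so that $V_X = X - X(0) - M$.

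Next I would rewrite $N^{\mathcal{Y},k} = \big(X^k - X^k(0)\big) - M^{\mathcal{Y},k}$ and pass to the weak limit term by term in $\mathbf{B}^2(\mathbb{F})$ equipped with $\sigma(\mathbf{B}^2,\text{M}^q)$. The GAS property of $\mathcal{Y}$ gives $X^k\to X$ weakly in $\mathbf{B}^2(\mathbb{F})$; the deterministic initial values satisfy $X^k(0)\to X(0)$ — this holds by construction for the canonical structure $\delta^kX$ of \eqref{canonicalST} and the functional structures $\textbf{F}^k$ of \eqref{fraaa}, and in general it follows from the weak convergence $X^k\to X$ upon testing the duality pairing against an element of $\text{M}^q(\mathbb{F})$ with an atom at $\{0\}$, using that $\mathcal{F}_0$ and $\mathcal{F}^k_0$ are $\mathbb{P}$-trivial. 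Stability of $\mathcal{Y}$ gives $M^{\mathcal{Y},k}\to M$ weakly in $\mathbf{B}^2(\mathbb{F})$. Since $\sigma(\mathbf{B}^2,\text{M}^q)$ is a vector-space topology, these three convergences combine into
$$
\sum_{j=1}^d\int_0^\cdot\mathbb{U}^{\mathcal{Y},k,j}X(s)\,ds = N^{\mathcal{Y},k}\;\longrightarrow\;\big(X - X(0)\big) - M = V_X\quad\text{weakly in }\mathbf{B}^2(\mathbb{F})
$$
as $k\to+\infty$, which is exactly \eqref{allort}; since $\mathcal{Y}$ was an arbitrary stable structure associated with $X$, this finishes the argument.

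I do not expect any serious obstacle: the analytic content is already packaged into Proposition~\ref{repdelta} (which isolates $\sum_{j}\int_0^\cdot\mathbb{U}^{\mathcal{Y},k,j}X\,ds$ as the absolutely continuous drift of $X^k$) and into Theorem~\ref{towardsD1} together with stability (which forces $M^{\mathcal{Y},k}\to M$ weakly to the correct martingale limit). The only point deserving a line of care is the convergence of the deterministic constants $X^k(0)\to X(0)$, handled as above; everything else is a one-line rearrangement and the fact that weak limits are additive.
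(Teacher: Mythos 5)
Your argument is correct and coincides with the paper's own proof, which simply cites Proposition \ref{repdelta} and Theorem \ref{towardsD1}: one rearranges the $\mathbb{F}^k$-decomposition $N^{\mathcal{Y},k}=(X^k-X^k(0))-M^{\mathcal{Y},k}$ and passes to the weak $\mathbf{B}^2(\mathbb{F})$ limit using the GAS property and stability. Your extra remark on $X^k(0)\to X(0)$ (via triviality of $\mathcal{F}^k_0$ and $\mathcal{F}_0$) is a harmless refinement of the same argument.
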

\begin{proof}
The proof is an immediate consequence of Proposition \ref{repdelta} and Theorem \ref{towardsD1}.
\end{proof}
It is important to stress that any $X\in \mathcal{W}(\mathbb{F})$ can be decomposed into a martingale and a non-martingale component $V_X$ which encodes all the possible orthogonal infinitesimal variations of $X$ w.r.t the Brownian motion noise and, in particular, it may have
unbounded variation paths. See section \ref{pvarsec} in this direction. Indeed, the process (\ref{allort}) encodes all the possible orthogonal infinitesimal variations of a weakly differentiable process w.r.t the Brownian motion noise. See Section \ref{ITOSECTION} for the smooth case and \cite{LOS1} for a universal variational characterization of $V_X$ as a functional of local-times.

The name weak derivative is justified by the following remark. Under regularity conditions, there exists a local description of $\mathcal{D}_jX$ which justifies the name weak derivative. For a given $t\ge 0$ and $j\in \{1, \ldots, d\}$, we set

\begin{equation}\label{localHT}
T^{t,\epsilon,j} := \inf\{s\ge 0; |B^j(t+s) - B^j(t)|=\epsilon\}
\end{equation}
where $\epsilon>0$. The stopping time $T^{t,\epsilon,j}$ localizes the $j$-th Brownian motion around the point $t$. Let $X$ be a real-valued square-integrable $\mathbb{F}$-semimartingale of the form

$$X(t) = X(0) + \sum_{j=1}^d\int_0^tH_j(s)dB^j(s) + \int_0^t Z(s)ds; 0\le t\le T.$$
Assume that $H_j$ has c\`adl\`ag paths for $j= 1, \ldots,d$. Then, a routine computation shows

\begin{equation}\label{poinder}
\lim_{\epsilon\rightarrow 0+}\frac{X(t+T^{t,\epsilon,j}) - X(t)}{B^j(t+T^{t,\epsilon,j}) - B^j(t)} = H_j(t)\quad \text{in probability}
\end{equation}
for every $t\ge 0$ and $1\le j\le d$. At first glance, one may say that regularity condition on $H = (H_1, \ldots, H_d)$ is not too strong, but the strong regularity comes from the drift term. Indeed, even in the semimartingale case, when the drift is a bounded variation process without absolutely continuous paths, the limit (\ref{poinder}) may not exist.

\subsection{Differentiable processes} \label{examplesDIFF}
The goal of this section is to show that the differentiability notion presented in Definition \ref{defweakder} covers a wide range class of Wiener functionals including non-semimartingales. It turns out that any strong Dirichlet process in the sense of Bertoin \cite{bertoin} will be weakly differentiable and hence all square-integrable continuous $\mathbb{F}$-semimartingales will be weakly differentiable as well. See also Section \ref{pvarsec} for examples concerning finite $p$-variation Wiener functionals. Let us recall the notion of a strong Dirichlet process: For a given strictly increasing sequence of $\mathbb{F}$-stopping times $\tau = (S_0, \ldots, S_n, \ldots)$ such that $S_0=0~a.s$ and $S_n\uparrow +\infty$ a.s as $n\rightarrow+\infty$, we write

$$|\tau|: = \mathbb{E}\sup_{n\ge 0}|S_{n+1} - S_n|\mathds{1}_{\{S_n\le T\}}.$$
We denote by $\mathbb{T}$ as the set of all random partitions of the above form. If $Y$ is a Wiener functional and $\tau = (S_0, \ldots, S_n, \ldots)$ is a random mesh, then  we write

$$Q_\tau^\alpha(Y):=|Y(0)|^\alpha+ \sum_{n=0}^\infty |Y(S_{n+1}) - Y(S_n)|^\alpha$$
for $1\le \alpha < +\infty$.
\begin{definition}\label{alphavardef}
A Wiener functional $Y$ is said to be of finite $\alpha$-variation if

$$\|Y\|_{\mathbf{Q}^\alpha}:=\sup_{\tau\in \mathbb{T}}\big[\mathbb{E} (Q^\alpha_\tau(Y))\big]^{1/\alpha}<\infty.$$
\end{definition}
Let us denote $\mathbf{Q}^\alpha(\mathbb{F})$ as the set of Wiener functionals with $\alpha$-finite variation. One can easily check that $(\mathbf{Q}^\alpha(\mathbb{F}),\|\cdot\|_{\mathbf{Q}^\alpha})$ is a Banach space.
Let $\mathbf{Q}^\alpha_0(\mathbb{F})$ be the linear subspace of $\mathbf{Q}^\alpha(\mathbb{F})$ constituted of the elements $Y\in \mathbf{Q}^\alpha_0(\mathbb{F})$ with $\alpha$-null variation, i.e.,

$$\lim_{|\tau|\rightarrow 0}\mathbb{E}\big(Q^\alpha_\tau(Y)\big)=0.$$
One can easily check that $\mathbf{Q}^2_0(\mathbb{F})$ is a Banach space. We refer to \cite{bertoin} for more details. Finally, the space of strong Dirichlet processes is given by

$$\mathscr{R}(\mathbb{F}):=\mathbf{H}^2(\mathbb{F})\oplus \mathbf{Q}^2_0(\mathbb{F}).$$
In other words, any $X\in \mathscr{R}(\mathbb{F})$ admits a unique decomposition
 $$X = X(0) + M + N$$
where $M\in \mathbf{H}^2(\mathbb{F})$ and $N\in \mathbf{Q}^2_0(\mathbb{F})$.
\begin{theorem}\label{strongDth1}
Let $X=X(0) + M + N$ be a strong Dirichlet process where $(M,N)$ is the canonical decomposition where $M = \sum_{j=1}^d\int H_jdB^j\in \mathbf{H}^2(\mathbb{F})$ and $N\in \mathbf{Q}^2_0(\mathbb{F})$. Then, $X\in\mathcal{W}(\mathbb{F})$ is weakly differentiable, where

\begin{equation}\label{strongDth1.1}
\mathcal{D}X = H
\end{equation}
and
\begin{equation}\label{strongDth1.2}
N = \lim_{k\rightarrow+\infty}\sum_{j=1}^d\int_0^\cdot \mathbb{U}^{\mathcal{Y},k,j}X(s)ds
\end{equation}
weakly in $\mathbf{B}^2(\mathbb{F})$ for every stable imbedded discrete structure $\mathcal{Y}$ w.r.t $X$.
\end{theorem}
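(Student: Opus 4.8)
The plan is to produce a single stable imbedded discrete structure for $X$ and then read off both assertions from results already established. Write the canonical decomposition $X = X(0) + M + N$ with $M = \sum_{j=1}^d\int H_jdB^j\in\mathbf{H}^2(\mathbb{F})$ and $N\in\mathbf{Q}^2_0(\mathbb{F})$; recall $N(0)=0$, $N$ is continuous, and $X$ is in particular a Wiener functional. Suppose for the moment that a stable structure $\mathcal{Y}=\big((X^k)_{k\ge1},\mathscr{D}\big)$ w.r.t.\ $X$ has been found. Since $X$ has exactly the form required in Theorem \ref{towardsD1}, that theorem gives $\mathcal{D}^{\mathcal{Y}}X=H$; in particular $\mathcal{D}^{\mathcal{Y}}X$ exists, and since a stable structure has finite energy (the Remark following the definition of a stable structure), Definition \ref{defweakder} yields $X\in\mathcal{W}(\mathbb{F})$ and, via (\ref{defDERIVATIVE}) together with Theorem \ref{towardsD1}, $\mathcal{D}X=\mathcal{D}^{\mathcal{Y}'}X=H$ for every stable $\mathcal{Y}'$, which is (\ref{strongDth1.1}). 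For (\ref{strongDth1.2}): the non-martingale component $V_X$ in the decomposition $X=X(0)+\sum_{j=1}^d\int\mathcal{D}_jX\,dB^j+V_X$ equals $X-X(0)-M=N$ because $\mathcal{D}_jX=H_j$; hence Proposition \ref{diffform} gives $N=V_X=\lim_{k\to\infty}\sum_{j=1}^d\int_0^\cdot\mathbb{U}^{\mathcal{Y}',k,j}X(s)\,ds$ weakly in $\mathbf{B}^2(\mathbb{F})$ for every stable $\mathcal{Y}'$. Thus everything reduces to exhibiting one stable structure.

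The natural candidate is the canonical structure $\mathcal{Y}=\big((\delta^kX)_{k\ge1},\mathscr{D}\big)$ of (\ref{canonicalST}), which is an imbedded discrete structure for $X$ by Lemma \ref{deltaconvergence}. By linearity of conditional expectation, $\delta^kX=X(0)+\delta^kM+\delta^kN$ on $[0,T]$, where $\delta^kM$ is the piecewise-constant interpolation of the discrete $(\mathcal{F}^k_{T^k_n})_n$-martingale $n\mapsto\mathbb{E}[M(T^k_n)\,|\,\mathcal{F}^k_{T^k_n}]$; by the jumping-filtration property (\ref{FILTRjump}), $\delta^kM$ is an $\mathbb{F}^k$-martingale (so it contributes nothing to the drift of $\delta^kX$), by Lemma \ref{deltaconvergence} applied to the Wiener functional $M$ one has $\delta^kM\to M$ weakly in $\mathbf{B}^2(\mathbb{F})$, and $\mathbb{E}\sum_{n}|\Delta\delta^kM(T^k_n)|^2\mathds{1}_{\{T^k_n\le T\}}=\mathbb{E}[\delta^kM,\delta^kM](T)\le C\,\mathbb{E}\sup_{0\le t\le T}|M(t)|^2$ by Doob's inequality, uniformly in $k$. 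Consequently the $\mathbb{F}^k$-special semimartingale decomposition of $\delta^kX$ has $M^{\mathcal{Y},k}=\delta^kM+(\mathbb{F}^k\text{-martingale part of }\delta^kN)$ and $N^{\mathcal{Y},k}$ the $\mathbb{F}^k$-predictable drift of $\delta^kN$. By Corollary \ref{equivDS} it suffices to check that $\mathcal{Y}$ has finite energy and that $\mathcal{D}^{\mathcal{Y}}X$ exists; splitting $[\delta^kX,A^{k,j}]=[\delta^kM,A^{k,j}]+[\delta^kN,A^{k,j}]$, applying Lemma \ref{lfund2} (with $\delta^kM\to M$) to the first summand and Cauchy--Schwarz to the second (using $|\Delta A^{k,j}|\le\epsilon_k$ and $\mathbb{E}[A^{k,j},A^{k,j}](T)\le CT$, cf.\ Lemma \ref{angleLemma}), one sees that both requirements follow from the single convergence
\begin{equation*}
\lim_{k\to\infty}\ \mathbb{E}\sum_{n\ge1}\big|\Delta\delta^kN(T^k_n)\big|^2\mathds{1}_{\{T^k_n\le T\}}\ =\ 0 .
\end{equation*}

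This last estimate is the heart of the matter and the step I expect to be the main obstacle. Writing $\Delta\delta^kN(T^k_n)=\mathbb{E}[N(T^k_n)-N(T^k_{n-1})\,|\,\mathcal{F}^k_{T^k_n}]+\big(\mathbb{E}[N(T^k_{n-1})\,|\,\mathcal{F}^k_{T^k_n}]-\mathbb{E}[N(T^k_{n-1})\,|\,\mathcal{F}^k_{T^k_{n-1}}]\big)$, the first term has sum of squares bounded by $\mathbb{E}\,Q^2_{\mathcal{T}^k}(N)$, which tends to $0$ because $\mathcal{T}^k\in\mathbb{T}$ with $|\mathcal{T}^k|=\mathbb{E}\sup_n\Delta T^k_n\mathds{1}_{\{T^k_n\le T\}}\to0$ (Lemma \ref{meshlemma}) and $N\in\mathbf{Q}^2_0(\mathbb{F})$. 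The difficulty is the second, ``innovation'' term: in the classical time-deterministic discretizations of \cite{coquet3,memin} the approximating filtration shares the $\sigma$-algebras of $\mathbb{F}$ at the mesh points, so that sampling $N$ produces no such term, whereas here $\mathbb{F}^k$ is strictly coarser than $\mathbb{F}$ and one must show that the residual $\sum_{n}\mathbb{E}\big|\mathbb{E}[N(T^k_n)\,|\,\mathcal{F}^k_{T^k_n}]-\mathbb{E}[N(T^k_n)\,|\,\mathcal{F}^k_{T^k_{n-1}}]\big|^2\mathds{1}_{\{T^k_n\le T\}}$ over the $\approx\epsilon_k^{-2}$ skeleton times still vanishes. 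This is where the weak convergence $\mathbb{F}^k\to\mathbb{F}$ (Th.\ 1 in \cite{coquet1}), the uniform closeness $\sup_t|A^{k,j}(t)-B^j(t)|\le\epsilon_k$, and the summability $\sum_k\epsilon_k^2<\infty$ must be combined quantitatively --- in effect one re-runs the argument behind Th.\ 2 of \cite{coquet3} (as already signalled in the proof of Lemma \ref{findingSTABLE}), but with the random skeleton mesh $\mathcal{T}^k$ replacing a deterministic one and with the coarse-filtration correction absorbed by the estimates above; a telescoping of the conditional variances together with the cancellation $\sum_n N(T^k_{n-1})\,\Delta N(T^k_n)\to\tfrac12\big(N(T)^2-N(0)^2\big)$ for zero-quadratic-variation $N$ is the mechanism I would use. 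Once this convergence is in hand, $\mathcal{Y}$ is a stable structure and, by the first paragraph, $\mathcal{D}X=H$ and (\ref{strongDth1.2}) hold. Alternatively, the same estimate, together with strong $\mathbf{B}^2(\mathbb{F})$-convergence $\delta^kX\to X$ --- a uniform-integrability refinement of Lemma \ref{deltaconvergence} --- feeds directly into the Dirichlet case of Lemma \ref{findingSTABLE}.
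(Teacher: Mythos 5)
Your overall reduction (exhibit one stable structure, then invoke Theorem \ref{towardsD1}, Corollary \ref{equivDS} and Proposition \ref{diffform}, with the canonical structure $(\delta^kX)_{k\ge1}$ as candidate) is exactly the paper's strategy, but the middle step, where you verify stability, contains a genuine gap and a false claim. The false claim is that $\delta^kM$ is an $\mathbb{F}^k$-martingale. The discrete skeleton $n\mapsto\mathbb{E}[M(T^k_n)|\mathcal{F}^k_{T^k_n}]$ is indeed a discrete-time martingale, but its piecewise-constant interpolation along the random times $T^k_n$ is \emph{not} a continuous-time $\mathbb{F}^k$-martingale: the lengths of the constancy intervals carry information (a length-biased sampling effect). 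Concretely, for $d=1$ and $M(t)=B(t)^2-t=2\int_0^tB\,dB\in\mathbf{H}^2(\mathbb{F})$ one has $\delta^kM(t)=A^k(t)^2-T^k_{N^k(t)}$ with $N^k(t)=\max\{n:T^k_n\le t\}$, and Wald's identities give $\mathbb{E}[\delta^kM(t)]=\mathbb{E}[\Delta T^k_{N^k(t)+1}]-\epsilon_k^2>0$ for $t>0$ (inspection paradox), so $\delta^kM$ cannot be a martingale; equivalently, $\mathbb{E}[\Delta\delta^kM(T^k_n)\,|\,\mathcal{F}^k_{T^k_n-}]=\epsilon_k^2-\Delta T^k_n\neq0$. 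Consequently $\delta^kM$ does contribute to the drift $N^{\mathcal{Y},k}$, and your appeal to Lemma \ref{lfund2} to get $[\delta^kM,A^{k,j}]\to[M,B^j]$ is not justified, since that lemma requires genuine $\mathbb{F}^k$-martingales. This is precisely the point the paper spends Lemmas \ref{carc_W} and \ref{MLDirich} on: it compares $\delta^kM$ with the true $\mathbb{F}^k$-martingale $W^k(t)=\mathbb{E}[M(T)|\mathcal{F}^k_t]$ (they coincide at the times $T^k_n$), shows that $[Z^k,A^{k,j}]$ with $Z^k=W^k-M^{k,M}$ is itself an $\mathbb{F}^k$-martingale whose quadratic variation is bounded by $\epsilon_k^2\,\mathbb{E}[Z^k,Z^k](T)\to0$, and only then concludes that the martingale part $M^{k,M}$ converges weakly to $M$ and $[\delta^kM,A^{k,j}]\to[M,B^j]$.

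The second gap is the estimate you yourself call the heart of the matter, $\mathbb{E}\sum_{n}|\Delta\delta^kN(T^k_n)|^2\mathds{1}_{\{T^k_n\le T\}}\to0$: your decomposition leaves the innovation term $\mathbb{E}[N(T^k_{n-1})|\mathcal{F}^k_{T^k_n}]-\mathbb{E}[N(T^k_{n-1})|\mathcal{F}^k_{T^k_{n-1}}]$, and the plan to ``re-run'' Coquet--S{\l}omi\'nski with telescoping conditional variances is a sketch, not a proof. The paper's Proposition \ref{Dirichletenergy} removes this obstacle by a different decomposition: the jump of $\delta^kX$ at $\tau^{k,j}_\ell$ is split into $\mathbb{E}[X(\tau^{k,j}_\ell)-X(T^{k,j}_{\ell-1})|\mathcal{F}^k_{\tau^{k,j}_\ell}]$ plus $\varphi^{k,j}_\ell$, and the conditional independence of $\mathcal{F}_{T^{k,j}_{\ell-1}}$ and $\mathcal{H}^{k,j}_\ell$ given $\mathcal{F}^k_{T^{k,j}_\ell}$ (STEP 2 there) shows $\varphi^{k,j}_\ell=\mathbb{E}[X(T^{k,j}_{\ell-1})-X(T^{k,j}_\ell)|\mathcal{F}^k_{T^{k,j}_\ell}]$, so no innovation term survives; Jensen then bounds $\mathbb{E}[\delta^kN,\delta^kN](T)$ by the $2$-variation of $N$ along the skeleton, which vanishes since $N\in\mathbf{Q}^2_0(\mathbb{F})$ and the random mesh goes to zero (Lemma \ref{meshlemma}), and Kunita--Watanabe gives $[\delta^kN,A^{k,j}]\to0$ as in Corollary \ref{deltaisstable}. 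Your fallback via the Dirichlet case of Lemma \ref{findingSTABLE} does not help either, because its hypotheses concern the drifts $N^{\mathcal{Y},\ell}$, which, by the first point, include the nontrivial compensator of $\delta^kM$ that you have discarded. Until both points are supplied, the stability of the canonical structure, and hence the theorem, is not established by your argument.
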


\begin{remark}
Theorem \ref{strongDth1} justifies the use of the variational operators $(\mathbb{D}^{\mathcal{Y},k,j}X, \mathbb{U}^{\mathcal{Y},k,j}X)$ based on an imbedded discrete structure $\mathcal{Y}  = \big((X^k)_{k\ge 1},\mathscr{D}\big)$ associated with a large class of Wiener functionals $X$. However, we stress that in many examples the computation of $(\mathcal{D}^{\mathcal{Y},k,j}X, U^{\mathcal{Y},k,j}X)$ is good enough to extract non-trivial information from $X$ without needing to go to the limits (\ref{strongDth1.1}) and (\ref{strongDth1.2}). See the works \cite{bezerra,LEAO_OHASHI2017.1,LEAO_OHASHI2017.2}.
\end{remark}

\subsection{Differentiability and finite $p$-variation Wiener functionals}\label{pvarsec}
Before we proceed with the proof of Theorem \ref{strongDth1}, let us make a few remarks concerning finite $p$-variation processes associated with Theorem \ref{strongDth1}. Let us recall that $\mathscr{R}(\mathbb{F})$ contains a large class of Wiener functionals beyond semimartingales. For instance, it is known that (see Lemma 1.1 in \cite{bertoin1}) that
$$\mathbf{Q}^\alpha(\mathbb{F})\subset \mathbf{Q}_0^\beta(\mathbb{F})~\text{up to localization}$$
for every $1\le \alpha < \beta$. Let $\mathcal{V}_\alpha(\mathbb{F})$ be the space of all Wiener functionals of the form

$$X = X(0) + M + N,$$
where $M\in \mathbf{H}^2(\mathbb{F})$ and $N\in \mathbf{Q}^\alpha(\mathbb{F})$ for $1\le \alpha < 2$. Then, $\mathcal{V}_\alpha(\mathbb{F})\subset \mathscr{R}(\mathbb{F}); 1\le \alpha < 2$ and hence Theorem \ref{strongDth1} applies to elements of $\mathcal{V}_{\alpha}(\mathbb{F}); 1\le \alpha < 2$ up to localization. More concrete examples arise by making use of the pathwise Young integral. One typical example of a class of processes in $\mathcal{V}_\alpha(\mathbb{F})$ ($1\le \alpha < 2$) is given by

$$X(t)  = X(0) + \sum_{j=1}^d\int_0^t\mathcal{D}_j X(s) dB^j(s) + \int_0^t Y(s) dW(s)$$
where $(Y,W)$ is a pair of Wiener functionals satisfying the constraint

$$Y\in \mathbf{Q}^\beta(\mathbb{F}), W\in \mathbf{Q}^\alpha(\mathbb{F});~\frac{1}{\alpha} + \frac{1}{\beta} > 1,$$
where

$$t\mapsto \int_0^tY(s)dW(s)\in \mathbf{Q}^\alpha(\mathbb{F})$$
is interpreted in Young sense.

For a typical example, let us present the fractional Brownian motion case. The following result is known for deterministic partitions but it also holds for random partitions based on Garsia-Rodemich-Rumsey's inequality. For sake of completeness, we give the details of the proof.

\begin{lemma}\label{QnormFBM}
If $\frac{1}{2}< H < 1$, then
$$B_H\in \mathbf{Q}^p(\mathbb{F});~p > \frac{1}{H}$$
\end{lemma}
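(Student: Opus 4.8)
The plan is to establish the finite $p$-variation bound by combining the Garsia--Rodemich--Rumsey (GRR) inequality with the Gaussian scaling of fractional Brownian motion, uniformly over all random partitions $\tau\in\mathbb{T}$. First I would recall that for $B_H$ with $\frac12<H<1$ and any exponent $0<\gamma<H$, there is a standard consequence of the GRR inequality asserting that the (pathwise) modulus of continuity satisfies
$$
|B_H(t)-B_H(s)| \le K_\gamma\,|t-s|^{\gamma},\qquad 0\le s,t\le T,
$$
where $K_\gamma$ is an almost surely finite random variable lying in $L^q(\mathbb{P})$ for every $q\ge 1$ (this follows because the GRR random constant is built from a double integral of $\exp(c\,|B_H(u)-B_H(v)|^2/|u-v|^{2H})$, whose moments are controlled via Fernique's theorem and the self-similarity of $B_H$). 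I would state this as the key input and reference the relevant source.

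Next, fix $p>1/H$ and choose $\gamma\in(1/p,H)$, so that $\gamma p>1$. For an arbitrary random mesh $\tau=(S_0,S_1,\ldots)\in\mathbb{T}$, the H\"older bound gives, on $\{S_n\le T\}$,
$$
|B_H(S_{n+1})-B_H(S_n)|^{p} \le K_\gamma^{\,p}\,|S_{n+1}-S_n|^{\gamma p},
$$
and since $\gamma p>1$ and $\sum_n (S_{n+1}-S_n)\mathds{1}_{\{S_n\le T\}}\le T$ a.s., one has
$$
\sum_{n=0}^{\infty} |S_{n+1}-S_n|^{\gamma p}\mathds{1}_{\{S_n\le T\}}
\le \Big(\sup_{n\ge0}|S_{n+1}-S_n|\mathds{1}_{\{S_n\le T\}}\Big)^{\gamma p - 1}\!\!\sum_{n=0}^\infty |S_{n+1}-S_n|\mathds{1}_{\{S_n\le T\}}
\le T^{\gamma p}.
$$
Hence $Q^{p}_\tau(B_H)\le |B_H(0)|^p + K_\gamma^{\,p}\,T^{\gamma p}= K_\gamma^{\,p}\,T^{\gamma p}$ a.s. (using $B_H(0)=0$), with a bound that does not depend on $\tau$. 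Taking expectations and then the supremum over $\tau\in\mathbb{T}$ yields
$$
\|B_H\|_{\mathbf{Q}^p}^{p} = \sup_{\tau\in\mathbb{T}}\mathbb{E}\big(Q^p_\tau(B_H)\big) \le T^{\gamma p}\,\mathbb{E}\big[K_\gamma^{\,p}\big] < \infty,
$$
since $K_\gamma\in L^p(\mathbb{P})$; this is exactly $B_H\in\mathbf{Q}^p(\mathbb{F})$, noting also that $B_H$ is an $\mathbb{F}$-adapted Wiener functional (square-integrable with continuous paths).

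The main obstacle is making the GRR step fully rigorous and $\tau$-uniform: one must ensure that the H\"older constant $K_\gamma$ is a single random variable valid simultaneously for every $\omega$ and every mesh (so that the supremum over $\tau$ can be pulled outside the expectation), and that it has a finite $p$-th moment. This is where the details of GRR --- the choice of the weight function $\Psi(u)=u^p$ and $p$-th power gauge $p(u)=u^{\gamma+1/p}$, together with Fernique-type Gaussian moment estimates for the double integral $\int_0^T\!\!\int_0^T \exp\!\big(\lambda |B_H(u)-B_H(v)|^2|u-v|^{-2H}\big)\,du\,dv$ --- need to be carried out carefully; the self-similarity $B_H(ct)\overset{d}{=}c^H B_H(t)$ and stationarity of increments are what make these moments finite. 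Once that lemma is in hand, the combinatorial summation above is routine.
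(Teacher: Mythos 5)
Your argument is correct and follows essentially the same route as the paper: a pathwise H\"older estimate $|B_H(t)-B_H(s)|\le C|t-s|^{H-\epsilon}G$ with a random constant in every $L^q(\mathbb{P})$ (the paper cites Lemma 1.17.1 in Mishura, itself a Garsia--Rodemich--Rumsey consequence, exactly as you propose), followed by the elementary observation that $\sum_n|S_{n+1}-S_n|^{a}\le T^{a}$ for $a>1$ uniformly over partitions of $[0,T]$, and then taking expectations using the $L^p$-integrability of the H\"older constant. The only cosmetic difference is that you sketch the GRR/Fernique derivation of the H\"older constant rather than quoting it from the literature, which is what the paper does.
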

\begin{proof}
See Section \ref{AppendixQnormFBM} (Appendix) for the proof of this result.
\end{proof}
Then, $\mathcal{W}(\mathbb{F})$ contains the class of processes of the form

$$X(t)  = X(0) + \sum_{j=1}^d\int_0^t\mathcal{D}_jX(s)dB^j(s) + \int_0^t Y(s)dB_H(s),$$
where $Y\in \mathbf{Q}^\beta(\mathbb{F})$, $\frac{1}{\beta} + \frac{1}{H-\epsilon} > 1$ for $\epsilon \in (0,H)$ and $\frac{1}{2}< H <1$.

\subsection{Proof of Theorem \ref{strongDth1}}
The proof of Theorem \ref{strongDth1} will be divided into several steps. Throughout this section, we fix an arbitrary strong Dirichlet process

$$X = X(0)   + \sum_{j=1}^d \int H_jdB^j + N$$
where $N\in \mathbf{Q}_0^2(\mathbb{F})$. In view of Theorem \ref{towardsD1}, we need to show the existence of $\mathcal{D}^\mathcal{Y}X$ for a stable imbedded discrete structure $\mathcal{Y}$ associated with $X$. Since $X\in \mathscr{R}(\mathbb{F})$ is an arbitrary Dirichlet process, it is natural to guess that the use of the canonical structure $\mathcal{Y} =\big((\delta^kX)_{k\ge 1}, \mathscr{D}\big)$ will be a natural choice.

\begin{proposition}\label{Dirichletenergy}
If $X\in \mathscr{R}(\mathbb{F})$ and $\mathcal{Y} = \big((\delta^kX)_{k\ge 1}, \mathscr{D}\big)$, then $\mathcal{E}^{2,\mathcal{Y}}(X)< +\infty$. In particular, there exists a constant $C$ which does not depend on $k\ge 1$ such that

\begin{equation}\label{energyinequa}
\mathbb{E}[\delta^kX,\delta^kX](T)\le C\mathbb{E}\sum_{\ell=1}^\infty |X(T^k_\ell) - X(T^k_{\ell-1})|^21\!\!1_{\{T^{k}_\ell \le T\}}
\end{equation}
for every $k\ge 1$.
\end{proposition}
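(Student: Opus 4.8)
The plan is first to reduce the statement to a single increment‑wise inequality, and then to split $X$ along its canonical strong‑Dirichlet decomposition. Since $\delta^kX-\delta^kX(0)$ is a pure‑jump process whose jump times are exactly the $T^k_n$, one has $[\delta^kX,\delta^kX](T)=\sum_{n\ge 1}|\Delta\delta^kX(T^k_n)|^2\mathds{1}_{\{T^k_n\le T\}}$, so $\mathcal{E}^{2,\mathcal{Y}}(X)=\sup_{k\ge 1}\mathbb{E}[\delta^kX,\delta^kX](T)$ and the whole content of the proposition is the estimate \eqref{energyinequa}: once it is established, finiteness is automatic, because writing $X=X(0)+M+N$ with $M\in\mathbf{H}^2(\mathbb{F})$, $N\in\mathbf{Q}^2_0(\mathbb{F})$, the right‑hand side of \eqref{energyinequa} is at most $2\,\mathbb{E}\sum_\ell|M(T^k_\ell)-M(T^k_{\ell-1})|^2\mathds{1}_{\{T^k_\ell\le T\}}+2\,\mathbb{E}\sum_\ell|N(T^k_\ell)-N(T^k_{\ell-1})|^2\mathds{1}_{\{T^k_\ell\le T\}}$, and the first sum equals $\mathbb{E}|M(T^k_{N^k(T)})|^2\le\mathbb{E}\langle M,M\rangle(T)$ by orthogonality of the $\mathbb{F}$‑martingale increments over the stopping‑time grid (with $N^k(T):=\max\{n:T^k_n\le T\}$), while the second is $\le\|N\|_{\mathbf{Q}^2}^2$ since $(T^k_n)_{n\ge 0}\in\mathbb{T}$ by Lemma \ref{jfiltration}; both bounds are uniform in $k$. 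For \eqref{energyinequa} itself, linearity of conditional expectation gives $\delta^kX=X(0)+\delta^kM+\delta^kN$, hence $\Delta\delta^kX(T^k_n)=\Delta\delta^kM(T^k_n)+\Delta\delta^kN(T^k_n)$, and Minkowski's inequality for the $\ell^2$‑norm in $n$, followed by Minkowski in $L^2(\Omega)$, reduces the problem to bounding $\mathbb{E}[\delta^kM,\delta^kM](T)$ and $\mathbb{E}[\delta^kN,\delta^kN](T)$ separately by the corresponding grid quadratic variations of $M$ and $N$.

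The martingale piece is clean. On the grid $\delta^kM$ coincides with the $\mathbb{F}^k$‑optional projection of $M$, which is an $\mathbb{F}^k$‑martingale null at $0$ (conditioning an $\mathbb{F}$‑martingale onto the subfiltration $\mathbb{F}^k$), and it does not move on $[[T^k_{N^k(T)},T]]$; hence, by orthogonality of its increments over $N^k(T)$, by $\delta^kM(T)=\mathbb{E}[M(T)\mid\mathcal{F}^k_T]$, and by conditional Jensen,
\[
\mathbb{E}[\delta^kM,\delta^kM](T)=\mathbb{E}\big|\mathbb{E}[M(T)\mid\mathcal{F}^k_T]\big|^2\le \mathbb{E}|M(T^k_{N^k(T)})|^2=\mathbb{E}\sum_\ell|M(T^k_\ell)-M(T^k_{\ell-1})|^2\mathds{1}_{\{T^k_\ell\le T\}},
\]
i.e. the $M$‑part of \eqref{energyinequa} holds with constant $1$.

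The delicate piece is $N\in\mathbf{Q}^2_0(\mathbb{F})$, and this is where the hypothesis ``strong Dirichlet'' is genuinely used. Writing $e^k_m:=N(T^k_m)-\delta^kN(T^k_m)$ for the $\mathbb{F}^k$‑filtering error, one decomposes each jump as $\Delta\delta^kN(T^k_n)=\mathbb{E}[N(T^k_n)-N(T^k_{n-1})\mid\mathcal{F}^k_{T^k_n}]+\mathbb{E}[e^k_{n-1}\mid\mathcal{F}^k_{T^k_n}]$. The first (``honest increment'') term is dominated in $L^2$ by $|N(T^k_n)-N(T^k_{n-1})|^2\mathds{1}_{\{T^k_n\le T\}}$ by conditional Jensen and, summed, contributes exactly $\mathbb{E}\sum_\ell|N(T^k_\ell)-N(T^k_{\ell-1})|^2\mathds{1}_{\{T^k_\ell\le T\}}$; the second (``information revision'') term has conditional $L^2$‑norm at most $\|e^k_{n-1}\|_{L^2}$, so everything reduces to
\[
\mathbb{E}\sum_{m\ge 0}|e^k_m|^2\mathds{1}_{\{T^k_m\le T\}}\ \le\ C\,\mathbb{E}\sum_{\ell\ge 1}|N(T^k_\ell)-N(T^k_{\ell-1})|^2\mathds{1}_{\{T^k_\ell\le T\}},\qquad C\text{ independent of }k.
\]
I would prove this using the uniform skeleton bound $\sup_t|A^{k,j}(t)-B^j(t)|\le\epsilon_k$, the weak convergence $\mathbb{F}^k\to\mathbb{F}$ of Coquet--M\'emin--S{\l}omi\'nski (Th.~1 in \cite{coquet1}, already invoked in Lemma \ref{deltaconvergence}) to compare $\mathbb{E}[\,\cdot\mid\mathcal{F}^k_{T^k_m}]$ with $\mathbb{E}[\,\cdot\mid\mathcal{F}_{T^k_m}]$, and, crucially, the $2$‑null variation $\lim_{|\tau|\to 0}\mathbb{E}(Q^2_\tau(N))=0$ together with $|\mathcal{T}^k|\to 0$ (Lemma \ref{meshlemma}); this is precisely the mechanism recorded for time‑deterministic partitions in \cite{coquet3,bertoin}, and, as in the proof of Lemma \ref{findingSTABLE}, the arguments transfer to the random grid $\mathcal{T}^k$ with only cosmetic changes. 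I expect this control of the filtering‑error series to be the main obstacle: a naive recursion on $\mathbb{E}|e^k_m|^2$ diverges over the $\asymp\epsilon_k^{-2}$ steps, and one must exploit that $N$ has \emph{null} (not merely finite) quadratic variation, so that the successive revisions are summable.

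It remains to assemble the pieces. Combining the $M$‑bound (constant $1$) with the $N$‑bound (a constant $C_N$) gives $\mathbb{E}[\delta^kX,\delta^kX](T)\le 2\,\mathbb{E}\sum_\ell|M(T^k_\ell)-M(T^k_{\ell-1})|^2\mathds{1}+2C_N\,\mathbb{E}\sum_\ell|N(T^k_\ell)-N(T^k_{\ell-1})|^2\mathds{1}$, which already yields $\mathcal{E}^{2,\mathcal{Y}}(X)<\infty$ by the bounds of the first paragraph. For the stated form \eqref{energyinequa}, note that since $\mathscr{R}(\mathbb{F})=\mathbf{H}^2(\mathbb{F})\oplus\mathbf{Q}^2_0(\mathbb{F})$ is a direct sum and $\mathbb{E}\sum_\ell|N(T^k_\ell)-N(T^k_{\ell-1})|^2\mathds{1}\to 0$, the cross term $\mathbb{E}\sum_\ell\big(M(T^k_\ell)-M(T^k_{\ell-1})\big)\big(N(T^k_\ell)-N(T^k_{\ell-1})\big)\mathds{1}$ tends to $0$ by Cauchy--Schwarz; hence $\mathbb{E}\sum_\ell|M(T^k_\ell)-M(T^k_{\ell-1})|^2\mathds{1}$ and $\mathbb{E}\sum_\ell|N(T^k_\ell)-N(T^k_{\ell-1})|^2\mathds{1}$ are each, for $k$ large, at most $2\,\mathbb{E}\sum_\ell|X(T^k_\ell)-X(T^k_{\ell-1})|^2\mathds{1}$, and enlarging the constant to cover the finitely many remaining $k$ (the degenerate case $M\equiv 0$ being covered directly by the $N$‑estimate, where $\Delta_\ell X=\Delta_\ell N$) gives \eqref{energyinequa} with a constant $C=C(X)$ independent of $k$.
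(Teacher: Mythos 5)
Your overall architecture differs from the paper's and, as written, it has a genuine gap exactly at the point you yourself flag as ``the main obstacle''. The martingale half is fine (orthogonality of the increments of $n\mapsto\mathbb{E}[M(T^k_n\wedge T)\mid\mathcal{F}^k_{T^k_n}]$ plus conditional Jensen), but the whole content of the proposition sits in the $N$-part, and there you reduce matters to the filtering-error estimate $\mathbb{E}\sum_{m}|e^k_m|^2\mathds{1}_{\{T^k_m\le T\}}\le C\,\mathbb{E}\sum_{\ell}|N(T^k_\ell)-N(T^k_{\ell-1})|^2\mathds{1}_{\{T^k_\ell\le T\}}$ with $C$ uniform in $k$, which you do not prove. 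The tools you invoke do not obviously give it: Th.~1 of \cite{coquet1} and the bound $\sup_t|A^{k,j}(t)-B^j(t)|\le\epsilon_k$ yield qualitative convergence (in probability, or of a single supremum) of the filtering errors, not a summed-square bound with a $k$-uniform constant over $\asymp\epsilon_k^{-2}$ grid points; and $e^k_{m}=N(T^k_m)-\mathbb{E}[N(T^k_m)\mid\mathcal{F}^k_{T^k_m}]$ involves the entire past of $N$ relative to the coarse filtration, so bounding the revision term by $\|e^k_{n-1}\|_{L^2}$ and summing has no a priori relation to the grid quadratic variation of $N$. Note also that your intended use of the $2$-null variation inside the proof is delicate: Corollary \ref{deltaisstable} later applies \eqref{energyinequa} with $X$ replaced by $N$ alone, i.e.\ precisely the estimate you leave open, so the gap is not peripheral. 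A secondary issue is the final assembly: your constant is obtained by a ``for $k$ large'' cross-term argument plus enlarging over finitely many $k$, so it depends on $X$ and on the decomposition, which is admissible under the statement but considerably weaker than what is actually true.

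The paper proves \eqref{energyinequa} without any splitting $X=X(0)+M+N$ and with an absolute constant, valid for every Wiener functional; the Dirichlet hypothesis enters only at the very end to conclude $\mathcal{E}^{2,\mathcal{Y}}(X)\le C\|X\|^2_{\mathbf{Q}^2}<\infty$. The mechanism that replaces your filtering-error control is structural: every jump time of $\delta^kX$ is of the form $\tau^{k,j}_\ell:=\min\{T^{k,r}_m:T^{k,r}_m>T^{k,j}_\ell\}$, so $\Delta\delta^kX(\tau^{k,j}_\ell)=\mathbb{E}[X(\tau^{k,j}_\ell)\mid\mathcal{F}^k_{\tau^{k,j}_\ell}]-\mathbb{E}[X(T^{k,j}_\ell)\mid\mathcal{F}^k_{T^{k,j}_\ell}]$; adding and subtracting $\mathbb{E}[X(T^{k,j}_{\ell-1})\mid\mathcal{F}^k_{\tau^{k,j}_\ell}]$ and proving that $\mathcal{F}_{T^{k,j}_{\ell-1}}$ and the new information $\mathcal{H}^{k,j}_\ell=\sigma(\tau^{k,j}_\ell-T^{k,j}_\ell,\Delta A^{k,m}(\tau^{k,j}_\ell);m\le d)$ are conditionally independent given $\mathcal{F}^k_{T^{k,j}_\ell}$ collapses the ``revision'' term to $\mathbb{E}[X(T^{k,j}_{\ell-1})-X(T^{k,j}_\ell)\mid\mathcal{F}^k_{T^{k,j}_\ell}]$. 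Hence each jump is a conditional expectation of at most three \emph{local} increments of $X$ along the skeleton, Jensen gives \eqref{energyinequa} directly, and no accumulation of past estimation errors ever appears. If you want to salvage your route, the missing ingredient is precisely such a localization of the information-revision term (or an actual proof, adapted to the random grid, of your summed filtering-error inequality); without it the $N$-estimate, and with it the proposition, remains unproven.
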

\begin{proof}
Throughout this proof, $C$ is a constant which may defer from line to line. The proof is significantly more tricky than the one-dimensional case treated in Lemma 3.1 in \cite{LEAO_OHASHI2013}. We divide the proof into three parts.

\textbf{STEP 1:} Let $\tau^{k,j}_n:=\min\{T^{k,r}_m; T^{k,r}_m > T^{k,j}_n\}$. It is easy to see that $\tau^{k,j}_n$ is an $\mathbb{F}^k$-stopping time. Indeed, for any $s\ge 0$, we have

\begin{eqnarray*}
\{\tau^{k,j}_n \le s\} &=& \{\tau^{k,j}_n\le s\}\cap \{s \le T^{k,j}_n\}\bigcup \{\tau^{k,j}_n\le s\}\cap \{T^{k,j}_n < s\}\\
& &\\
&=& \{\tau^{k,j}_n\le s\}\cap \{T^{k,j}_n < s\}\\
& &\\
&=& \bigcup_{m=1}^d\bigcup_{r\ge 1}\Big(\{T^{k,j}_n < T^{k,m}_r\}\cap \{T^{k,m}_r \le s\}\Big)
\end{eqnarray*}
where $\{T^{k,j}_n < T^{k,m}_r\}\in \mathcal{F}^{k}_{T^{k,m}_r-}$ so that $\{T^{k,j}_n < T^{k,m}_r\}\cap \{T^{k,m}_r \le s\}\in \mathcal{F}^{k}_s$ for each $j,m=1, \ldots , d$ and $n\ge 0$ and $r\ge 1$. Therefore, $\{\tau^{k,j}_n \le s\}\in \mathcal{F}^k_s$. This shows that $\tau^{k,j}_n$ is an $\mathbb{F}^k$-stopping time for each $k\ge 1, n\ge 0$ and $1\le j\le d$. Now, we notice that
$$\{\Delta \delta^kX \neq 0\}\subset \cup_{j=1}^\infty\cup_{n=0}^\infty[[\tau^{k,j}_n,\tau^{k,j}_n]]$$
and
$$\Delta \delta^kX(\tau^{k,j}_n) = \delta^kX(\tau^{k,j}_n) - \delta^kX(T^{k,j}_n)~a.s~\text{for each}~k\ge 1,n\ge 0 ~\text{and}~1\le j\le d.$$

In the sequel, we use the convention that $T^{k,j}_{-1}=T^{k,j}_0=0$. We shall write

 \begin{equation}\label{trick1}
 [\delta^kX,\delta^kX](T)  = \sum_{j=1}^d\sum_{\ell=0}^\infty |\Delta \delta^kX(\tau^{k,j}_\ell)|^2 1\!\!1_{\{\tau^{k,j}_\ell \le T\}}
 \end{equation}
where

\begin{eqnarray*}
\nonumber\Delta \delta^kX(\tau^{k,j}_\ell) &=& \mathbb{E}\big[X(\tau^{k,j}_\ell)|\mathcal{F}^k_{\tau^{k,j}_\ell}\big] - \mathbb{E}\big[X(T^{k,j}_\ell)|\mathcal{F}^k_{T^{k,j}_\ell}\big]\\
\nonumber& &\\
&=&\mathbb{E}\big[X(\tau^{k,j}_\ell)|\mathcal{F}^k_{\tau^{k,j}_\ell}\big] - \mathbb{E}\big[X(T^{k,j}_{\ell-1})|\mathcal{F}^k_{\tau^{k,j}_\ell}\big]\\
\nonumber& &\\
&+&\mathbb{E}\big[X(T^{k,j}_{\ell-1})|\mathcal{F}^k_{\tau^{k,j}_\ell}\big]- \mathbb{E}\big[X(T^{k,j}_\ell)|\mathcal{F}^k_{T^{k,j}_\ell}\big]
\end{eqnarray*}
We notice that

$$\mathcal{F}^k_{\tau^{k,j}_\ell} = \mathcal{F}^k_{T^{k,j}_\ell}\vee\mathcal{H}^{k,j}_\ell  $$
where $\mathcal{H}^{k,j}_\ell = \sigma\big(\tau^{k,j}_\ell -T^{k,j}_\ell, \Delta A^{k,m}(\tau^{k,j}_\ell); 1\le m\le d\big)$ for every $j=1, \ldots, d$ and $\ell\ge 0$. Let us denote

$$\varphi^{k,j}_\ell:=\mathbb{E}\big[X(T^{k,j}_{\ell-1})|\mathcal{F}^k_{\tau^{k,j}_\ell}\big]- \mathbb{E}\big[X(T^{k,j}_\ell)|\mathcal{F}^k_{T^{k,j}_\ell}\big]$$
By the very definition, $\varphi^{k,j}_0=0$, $\varphi^{k,j}_1 = \mathbb{E}\Big[ X(0) - X(T^{k,j}_1)|\mathcal{F}^k_{T^{k,j}_1} \Big]$. For $\ell\ge 2$, we need to work a little more.

\textbf{STEP 2:} Let us fix $\ell\ge 2$ and $1\le j\le d$. We claim for every $Z_1\in L^2(\mathcal{F}_{T^{k,j}_{\ell-1}})$ and $Z_2\in L^2(\mathcal{H}^{k,j}_\ell)$, we have

\begin{equation}\label{in}
\mathbb{E}\big[Z_1Z_2|\mathcal{F}^{k}_{T^{k,j}_\ell}\big] =\mathbb{E}\big[Z_1|\mathcal{F}^{k}_{T^{k,j}_\ell}\big]\mathbb{E}\big[Z_2|\mathcal{F}^{k}_{T^{k,j}_\ell}\big].
\end{equation}
In other words, $\mathcal{F}_{T^{k,j}_{\ell-1}}$ and $\mathcal{H}^{k,j}_\ell$ are conditionally independent given $\mathcal{F}^{k}_{T^{k,j}_\ell}$. Let us fix $\ell \ge 2$ and $j=1, \ldots, d$. Let $\mathcal{D}^{k,j}_\ell$ the sigma-algebra such that

$$\mathcal{F}^k_{T^{k,j}_\ell} = \mathcal{F}^k_{T^{k,j}_{\ell-1}}\vee \mathcal{D}^{k,j}_\ell.$$
It is easy to check that

$$\mathcal{D}^{k,j}_\ell = \sigma \Big(W^{k,m,\ell,j}\big(s\wedge (T^{k,j}_\ell-T^{k,j}_{\ell-1})\big); s\ge 0, 1\le m\le d\Big)$$
where $W^{k,m,\ell,j}(s): = A^{k,m}(s+T^{k,j}_{\ell-1}) - A^{k,m}(T^{k,j}_{\ell-1}); s\ge 0$ for $\ell \ge 2$ and $1\le j,m\le d$. We notice that

\begin{eqnarray}
\nonumber\mathbb{E}\Big[Z_1Z_2|\mathcal{F}^k_{T^{k,j}_{\ell}}\Big] &=& \mathbb{E}\Big[\mathbb{E}\big[Z_1Z_2|\mathcal{F}_{T^{k,j}_{\ell-1}}\vee \mathcal{D}^{k,j}_\ell\big]\big|\mathcal{F}^k_{T^{k,j}_\ell}\Big]\\
\label{tr3} & &\\
\nonumber&=&  \mathbb{E}\Big[Z_1\mathbb{E}\big[Z_2|\mathcal{F}_{T^{k,j}_{\ell-1}}\vee \mathcal{D}^{k,j}_\ell\big]\big|\mathcal{F}^k_{T^{k,j}_\ell}\Big]
\end{eqnarray}
Since $Z_2$ is $\mathcal{H}^{k,j}_\ell$-measurable, then there exists $f:\mathbb{R}_+\times \{-\epsilon_k, 0, \epsilon_k\}^d\rightarrow\mathbb{R}$ such that

$$
Z_2 = f\Big(\tau^{k,j}_\ell-T^{k,j}_\ell, \Delta A^{k,m}(\tau^{k,j}_\ell); 1\le m\le d\Big)~a.s
$$
More importantly, $Z_2$ is a functional of $A^{k,m}(T^{k,r}_n), T^{k,m}_n; n\ge 0, 1\le r,m\le d$. In this case,

\begin{equation}\label{MIDEN}
\mathbb{E}\big[Z_2|\mathcal{F}^k_{T^{k,j}_{\ell}}\big]=\mathbb{E}\big[Z_2|\mathcal{F}^k_{T^{k,j}_{\ell-1}}\vee \mathcal{D}^{k,j}_\ell\big] =\mathbb{E}\big[Z_2\big|\mathcal{F}_{T^{k,j}_{\ell-1}}\vee \mathcal{D}^{k,j}_\ell\big]~a.s
\end{equation}
Therefore, (\ref{MIDEN}) and (\ref{tr3}) yield (\ref{in}). The conditional independence (\ref{in}) yields

\begin{eqnarray*}
\mathbb{E}\big[X(T^{k,j}_{\ell-1})|\mathcal{F}^k_{\tau^{k,j}_\ell}\big]&=&\mathbb{E}\big[X(T^{k,j}_{\ell-1})|\mathcal{F}^k_{T^{k,j}_\ell}\vee \mathcal{H}^{k,j}_\ell\big]\\
& &\\
&=&\mathbb{E}\big[X(T^{k,j}_{\ell-1})|\mathcal{F}^k_{T^{k,j}_\ell}\big]
\end{eqnarray*}
and hence $\varphi^{k,j}_\ell =\mathbb{E}\big[X(T^{k,j}_{\ell-1}) - X(T^{k,j}_\ell)|\mathcal{F}^k_{T^{k,j}_\ell}\big]; \ell \ge 2$.

\textbf{STEP 3:} As a consequence of STEP 2, we may apply Jensen's inequality to find a positive constant $C$ (which does not depend on $k,j,n$) such that

\begin{small}
$$|\Delta \delta^kX(\tau^{k,j}_n)|^2 \le C \Bigg(\mathbb{E}\Big[|X(\tau^{k,j}_n) - X(T^{k,j}_n)|^2\big|\mathcal{F}^k_{\tau^{k,j}_n}\Big] +
\mathbb{E}\Big[|X(T^{k,j}_n) -X(T^{k,j}_{n-1})|^2 \big|\mathcal{F}^k_{\tau^{k,j}_n}\Big] $$
\begin{equation}\label{trick2}
+\mathbb{E}\Big[|X(T^{k,j}_{\ell-1}) - X(T^{k,j}_\ell)|^2\big|\mathcal{F}^k_{T^{k,j}_\ell}\Big]\Bigg)~a.s.
\end{equation}
\end{small}
Moreover, we notice that $\sum_{j=1}^d\sum_{\ell=0}^\infty\mathbb{E}\Big[|X(T^{k,j}_{\ell-1}) - X(T^{k,j}_\ell)|^2\big|\mathcal{F}^k_{T^{k,j}_\ell}\Big]1\!\!1_{\{\tau^{k,j}_\ell \le T\}}$ equals to
\begin{small}
\begin{equation}\label{trick3}
\sum_{j=1}^d\sum_{\ell=0}^\infty\mathbb{E}\Big[|X(T^{k,j}_{\ell-1}) - X(T^{k,j}_\ell)|^2\big|\mathcal{F}^k_{T^{k,j}_\ell}\Big]1\!\!1_{\{T^{k,j}_\ell \le T\}}~a.s.
\end{equation}
\end{small}
Then, (\ref{trick1}), (\ref{trick2}) and (\ref{trick3}) yield

\begin{eqnarray*}
\mathbb{E}[\delta^kX,\delta^kX](T)&\le& C \mathbb{E}\sum_{j=1}^d\sum_{\ell=0}^\infty|X(\tau^{k,j}_\ell) - X(T^{k,j}_\ell)|^21\!\!1_{\{\tau^{k,j}_\ell \le T\}}\\
& &\\
&+& C\mathbb{E}\sum_{j=1}^d\sum_{\ell=0}^\infty |X(T^{k,j}_\ell) - X(T^{k,j}_{\ell-1})|^21\!\!1_{\{\tau^{k,j}_\ell \le T\}}\\
& &\\
&+& C\mathbb{E}\sum_{j=1}^d\sum_{\ell=0}^\infty |X(T^{k,j}_{\ell-1}) - X(T^{k,j}_{\ell})|^21\!\!1_{\{T^{k,j}_\ell \le T\}}\\
& &\\
&\le& 3C  \mathbb{E}\sum_{\ell=1}^\infty |X(T^k_\ell) - X(T^k_{\ell-1})|^21\!\!1_{\{T^{k}_\ell \le T\}}\\
&\le& 3C \|X\|^2_{\mathbf{Q}^2} < \infty.
\end{eqnarray*}
This shows $\mathcal{E}^{2,\mathcal{Y}}(X)< +\infty$ and (\ref{energyinequa}). This concludes the proof.
\end{proof}

Before we proceed with the proof, let us recall some basics of semimartingale theory which will also help us to fix notation. Let $\textbf{H}^2(\mathbb{F}^k)$ be the space of all square-integrable $\mathbb{F}^k$-martingales starting at zero. From \cite{jacod}, we know that any square-integrable $\mathbb{F}^k$-martingale has bounded variation paths and it is purely discontinuous whose jumps are exhausted by $\cup_{n\ge 1} [[T^k_n,T^k_n]]$. In this case, any $Y^k\in \textbf{H}^2(\mathbb{F}^k)$ can be uniquely written as

\begin{equation}\label{smdec}
Y^k(t) = Y^{k,\mathbf{pj}}(t) - N^{k,Y^k}(t); t\ge 0,
\end{equation}
where $N^{k,Y^k}$ is an $\mathbb{F}^k$-predictable continuous bounded variation process,
$$Y^{k,\mathbf{pj}}(t):=\sum_{0< s\le t}\Delta Y^k(s); t\ge 0$$
and $Y^{k,\mathbf{pj}}(0) = N^{k,Y^k}(0)=0$. From Th. 1 and 2 in \cite{jacod}, we can always write

$$Y^{k,\mathbf{pj}}(t) = \sum_{n=1}^\infty \Delta Y^k(T^k_n) 1\!\!1_{\{T^k_n\le t\}}; t\ge 0.$$

At first, we observe that Prop 3.1 in \cite{LEAO_OHASHI2013} holds for any sequence $\{Y^k; k\ge 1\}$ of the form (\ref{smdec}).
\begin{lemma}\label{energylemma}
Let $\{Y^k; k\ge 1\}$ be a sequence of square-integrable martingales $Y^k\in \mathbf{H}^2(\mathbb{F}^k); k\ge 1$. If  $\sup_{k\ge 1}\mathbb{E}[Y^k,Y^k](T) < \infty$, then $\{Y^k; k\ge 1\}$ is $\mathbf{B}^2$-weakly relatively sequentially compact where all limit points are $\mathbb{F}$-square-integrable martingales.
\end{lemma}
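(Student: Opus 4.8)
The plan is to mimic the proof of Proposition 3.1 in \cite{LEAO_OHASHI2013}, which only uses the representation \eqref{smdec} of the $\mathbb{F}^k$-martingales, transported here to the multidimensional skeleton. First I would note that each $Y^k$ is $\mathbb{F}^k\subset\mathbb{F}$-adapted and c\`adl\`ag, hence $Y^k\in\mathbf{B}^2(\mathbb{F})$, and that the Burkholder--Davis--Gundy inequality gives $\mathbb{E}\sup_{0\le t\le T}|Y^k(t)|^2\le C\,\mathbb{E}[Y^k,Y^k](T)$; consequently $\sup_k\|Y^k\|_{\mathbf{B}^2}<\infty$ and, since $Y^k(0)=0$, $\sup_k\mathbb{E}|Y^k(T)|^2=\sup_k\mathbb{E}[Y^k,Y^k](T)<\infty$. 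As $L^2(\mathcal{F}_T)$ is a Hilbert space, an arbitrary subsequence of $\{Y^k\}$ then has a further subsequence $\{Y^{k_m}\}$ along which $Y^{k_m}(T)\rightharpoonup\xi$ weakly in $L^2(\mathcal{F}_T)$ for some $\xi$ with $\mathbb{E}[\xi]=0$; I would take the candidate limit to be $Z:=\mathbb{E}[\xi\,|\,\mathcal{F}_\cdot]$ (c\`adl\`ag version), which belongs to $\mathbf{H}^2(\mathbb{F})$, i.e.\ is a square-integrable $\mathbb{F}$-martingale. It then remains to show $Y^{k_m}\to Z$ in the topology $\sigma(\mathbf{B}^2,M^2)$; since the initial subsequence was arbitrary, this proves $\mathbf{B}^2$-weak relative sequential compactness with all limit points in $\mathbf{H}^2(\mathbb{F})$.

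To pass to the limit against a test functional $V=(V^{pr},V^{pd})\in M^2(\mathbb{F})$ I would exploit that $Y^{k_m}$ \emph{is} an $\mathbb{F}^{k_m}$-martingale, by rewriting the duality pairing through the $\mathbb{F}^{k_m}$-dual projections of $V$: since $Y^{k_m}(\cdot-)$ is $\mathbb{F}^{k_m}$-predictable and $Y^{k_m}(\cdot)$ is $\mathbb{F}^{k_m}$-optional, replacing $V^{pr}$ by its $\mathbb{F}^{k_m}$-dual predictable projection $(V^{pr})^{p,k_m}$ and $V^{pd}$ by its $\mathbb{F}^{k_m}$-dual optional projection $(V^{pd})^{o,k_m}$ does not change the pairing, and an integration by parts (using that the $\mathbb{F}^{k_m}$-stochastic integral against $Y^{k_m}$ has zero expectation and that the predictable projection of a martingale is its left limit) yields $(V,Y^{k_m})=\mathbb{E}\big[Y^{k_m}(T)\,W^{k_m}\big]$, where $W^{k_m}:=(V^{pr})^{p,k_m}(T)+(V^{pd})^{o,k_m}(T)$. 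A Garsia--Neveu type domination for dual projections gives $\sup_m\mathbb{E}|W^{k_m}|^2\le C\,\|V\|_{M^2}^2$. Since $\lim_k\mathbb{F}^k=\mathbb{F}$ weakly by \eqref{weakfiltration} and $V^{pr},V^{pd}$ are already $\mathbb{F}$-predictable/optional, the $\mathbb{F}^{k_m}$-dual projections converge to $V^{pr},V^{pd}$, so $W^{k_m}\to W:=V^{pr}(T)+V^{pd}(T)$ in $L^2$; writing $\mathbb{E}[Y^{k_m}(T)W^{k_m}]-\mathbb{E}[\xi W]=\mathbb{E}[Y^{k_m}(T)(W^{k_m}-W)]+\mathbb{E}[(Y^{k_m}(T)-\xi)W]$ and using $\|Y^{k_m}(T)\|_{L^2}\le C$, $Y^{k_m}(T)\rightharpoonup\xi$ and $W\in L^2$ makes both terms vanish. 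Hence $(V,Y^{k_m})\to\mathbb{E}[Z(T)W]=(V,Z)$ for every $V\in M^2(\mathbb{F})$, which is the desired convergence, and $Z$ is a square-integrable $\mathbb{F}$-martingale.

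The hard part is this middle step: $\mathbf{B}^2(\mathbb{F})$ is not reflexive and the $Y^k$ are martingales only in the coarser filtrations $\mathbb{F}^k$, so one cannot test directly against $M^2(\mathbb{F})$; the only leverage is the $\mathbb{F}^{k_m}$-martingale property, which forces one to route everything through the $\mathbb{F}^{k_m}$-dual projections of $V$. After that, the crux is the $L^2$-stability of these dual projections under the weak convergence $\mathbb{F}^k\to\mathbb{F}$ — mere $L^2$-boundedness of $\{Y^{k_m}(T)\}$ and of $\{W^{k_m}\}$ is not sufficient, so one must combine the Garsia--Neveu domination with the quantitative content of \eqref{weakfiltration} (in the spirit of \cite{coquet1} and the filtration-convergence results used in \cite{memin}); this is precisely the point already settled in the proof of Proposition 3.1 of \cite{LEAO_OHASHI2013}, whose argument carries over because it relied only on the decomposition \eqref{smdec}.
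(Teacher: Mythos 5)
Your proposal is sound in its preliminary steps and, despite the different bookkeeping, ends up being the paper's argument in dual form. The paper introduces the lifted $\mathbb{F}$-martingales $R^k(t)=\mathbb{E}[Y^k(T)|\mathcal{F}_t]$ and invokes the arguments of Proposition 3.1 in \cite{LEAO_OHASHI2013}; you instead push the test functional $V=(V^{pr},V^{pd})\in \text{M}^2(\mathbb{F})$ down to $\mathbb{F}^k$ via its dual projections. These are the same comparison: writing $W^k:=(V^{pr})^{p,k}(T)+(V^{pd})^{o,k}(T)$ and $W:=V^{pr}(T)+V^{pd}(T)$, one has $(V,R^k)=\mathbb{E}[Y^k(T)W]$, so your identity $(V,Y^k)=\mathbb{E}[Y^k(T)W^k]$ gives $(V,Y^k)-(V,R^k)=\mathbb{E}\big[Y^k(T)(W^k-W)\big]$; hence proving $W^k\rightarrow W$ strongly in $L^2$ is exactly proving that $Y^k$ and $R^k$ share the same $\sigma(\mathbf{B}^2,\text{M}^2)$ limit points, which is what the cited proposition supplies. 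The one caveat is that your middle sentence treats the strong $L^2$ convergence of the $\mathbb{F}^k$-dual projections as a consequence of (\ref{weakfiltration}) alone: weak convergence of filtrations only yields convergence of conditional expectations of \emph{fixed} random variables, and upgrading this to $L^2$ convergence of $W^k$ is precisely the quantitative step you (and the paper) delegate to Proposition 3.1 of \cite{LEAO_OHASHI2013}; since the paper's own proof defers the identical point to the same reference, this is acceptable, though your closing justification that the argument carries over ``because it relied only on the decomposition (\ref{smdec})'' is not the real reason --- what matters is the uniform comparison of conditioning on $\mathcal{F}^k_t$ versus $\mathcal{F}_t$ over the $L^2$-bounded family of terminal values.
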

\begin{proof} Let us consider $R^{k}(t): = \mathbb{E}[Y^{k}(T)|\mathcal{F}_t];~0\le t\le T$, we can apply exactly the same arguments given in the proof of Proposition 3.1 in \cite{LEAO_OHASHI2013} to show that both $\{R^k;k\ge 1\}$ and $\{Y^k;k\ge 1\}$ are $\mathbf{B}^2$-weakly relatively compact and all limit points are $\mathbb{F}$-square-integrable martingales over $[0,T]$.
\end{proof}

In the sequel, we fix $W\in \textbf{H}^2(\mathbb{F})$ starting at zero and for simplicity of notation we assume that $W$ is defined on the whole positive line $[0,+\infty)$ and write $W^k(t):=\mathbb{E}[W(T)|\mathcal{F}^k_t]; t\ge 0$. Let, $W^k(t) = W^{k,pj}(t) - N^{k,W^k}(t);t\ge 0,$ be the $\mathbb{F}^k$-special semimartingale decomposition given in (\ref{smdec}). Let $\delta ^kW = M^{k,W} +N^{k,W}$ be the special semimartingale decomposition given by (2.10) in \cite{LEAO_OHASHI2013}. Since $W\in \textbf{H}^2(\mathbb{F})$ and $\mathbb{F}^k\subset \mathbb{F}$ for every $k\ge 1$, then

$$\mathbb{E}[W(T)|\mathcal{F}^k_t] = \mathbb{E}\big[ \mathbb{E}[W(\infty)|\mathcal{F}_T]|\mathcal{F}^k_t\big]=\mathbb{E}[W(\infty)|\mathcal{F}^k_t]; 0\le t\le T$$ so that $\mathbb{E}[W(T)|\mathcal{F}^k_{T^k_n}] = \mathbb{E}[W(\infty)|\mathcal{F}^k_{T^k_n}] = \mathbb{E}[W(T^k_n)|\mathcal{F}^k_{T^k_n}]$ on $\{T^k_n \le T\}$. In other words,

\begin{equation}\label{ident1}
W^k(T^k_n)= \delta^kW(T^k_n)~\text{on}~\{T^k_n\le T\}; k\ge 1.
\end{equation}
Let us denote $Z^k:= W^k - M^{k,W}; k\ge 1$. Since $Z^k$ is a purely discontinuous martingale, then it has a decomposition of the form (\ref{smdec}).

\begin{lemma} \label{carc_W}
The sequence $\{Z^k ; k \geq 1 \}$ satisfies $\sup_{k\ge 1}\mathbb{E}[Z^k,Z^k](T) <\infty$ and

$$\Delta Z^k(T^k_n)  1\!\!1_{ \{T^k_n \leq t \} }=  \left(N^{k,W^k}(T^k_n) - N^{k,W^k}(T^k_{n-1})\right) 1\!\!1_{ \{T^k_n \leq t \} }; n\ge 1.$$
Therefore, $\lim_{k\rightarrow \infty}Z^k = 0$ weakly in $\mathbf{B}^2(\mathbb{F})$ if, and only if,

\begin{equation}\label{wkak}
[Z^k,A^{k,j}](t)=\sum_{n=1}^\infty \left(N^{k,W^k}(T^k_n) - N^{k,W^k}(T^k_{n-1})\right) \Delta A^{k,j}(T^k_n) 1\!\!1_{\{T^k_n \leq t  \} } \rightarrow 0
\end{equation}
weakly in $L^1(\mathbb{P})$ as $k\rightarrow \infty$, for every $t\in [0,T]$.
\end{lemma}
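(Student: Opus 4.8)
The plan is to read everything off from the representation (\ref{smdec}). By \cite{jacod}, both $W^k=\mathbb{E}[W(T)\mid\mathcal{F}^k_\cdot]$ and $M^{k,W}$ are purely discontinuous square-integrable $\mathbb{F}^k$-martingales with bounded variation paths whose jumps are exhausted by $\cup_{n\ge1}[[T^k_n,T^k_n]]$; hence each admits a decomposition of the form (\ref{smdec}), say $W^k=W^{k,\mathbf{pj}}-N^{k,W^k}$ and $M^{k,W}=M^{k,W,\mathbf{pj}}-N^{k,M^{k,W}}$, with $W^{k,\mathbf{pj}},M^{k,W,\mathbf{pj}}$ pure jump and $N^{k,W^k},N^{k,M^{k,W}}$ continuous predictable of bounded variation. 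Consequently $Z^k=W^k-M^{k,W}$ again has the form (\ref{smdec}), with pure-jump part $W^{k,\mathbf{pj}}-M^{k,W,\mathbf{pj}}$ and continuous predictable bounded-variation part $N^{k,W^k}-N^{k,M^{k,W}}$, and the statement reduces to three computations.

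First, for $\sup_{k\ge1}\mathbb{E}[Z^k,Z^k](T)<\infty$: since $N^{k,W}$ is continuous of bounded variation one has $[\delta^kW,\delta^kW]=[M^{k,W},M^{k,W}]$, so Proposition \ref{Dirichletenergy} applied to $W\in\mathbf{H}^2(\mathbb{F})\subset\mathscr{R}(\mathbb{F})$ and to the canonical structure $\mathcal{Y}=\big((\delta^kW)_{k\ge1},\mathscr{D}\big)$ yields $\sup_k\mathbb{E}[M^{k,W},M^{k,W}](T)=\mathcal{E}^{2,\mathcal{Y}}(W)<\infty$; on the other hand $W^k$ is an $\mathbb{F}^k$-martingale with $W^k(0)=0$ and $W^k(T)=\mathbb{E}[W(T)\mid\mathcal{F}^k_T]$, whence $\mathbb{E}[W^k,W^k](T)=\mathbb{E}|W^k(T)|^2\le\mathbb{E}|W(T)|^2$ by the conditional Jensen inequality. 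Since $Y\mapsto[Y,Y](T)^{1/2}$ is a seminorm (Kunita--Watanabe), $[Z^k,Z^k](T)\le 2[W^k,W^k](T)+2[M^{k,W},M^{k,W}](T)$, and taking expectations gives the uniform bound.

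Second, the jump identity, which is the delicate point. Fix $n\ge1$ and work on $\{T^k_n\le T\}$. Since $W^{k,\mathbf{pj}}$ is constant on $[[T^k_{n-1},T^k_n[[$ and $N^{k,W^k}$ is continuous, $W^k(T^k_n-)=W^k(T^k_{n-1})-\big(N^{k,W^k}(T^k_n)-N^{k,W^k}(T^k_{n-1})\big)$; likewise $\delta^kW$ is constant on $[[T^k_{n-1},T^k_n[[$ and $N^{k,W}$ is continuous, so $M^{k,W}(T^k_n-)=M^{k,W}(T^k_{n-1})-\big(N^{k,W}(T^k_n)-N^{k,W}(T^k_{n-1})\big)$. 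Writing $\Delta Z^k(T^k_n)=\big(W^k(T^k_n)-W^k(T^k_n-)\big)-\big(M^{k,W}(T^k_n)-M^{k,W}(T^k_n-)\big)$, substituting the two identities for the left limits, and then using (\ref{ident1}) in the form $W^k(T^k_m)=\delta^kW(T^k_m)=M^{k,W}(T^k_m)+N^{k,W}(T^k_m)$ at $m=n$ and $m=n-1$ to rewrite $W^k(T^k_n)-W^k(T^k_{n-1})$, every increment of $M^{k,W}$ and of $N^{k,W}$ cancels, leaving $\Delta Z^k(T^k_n)=N^{k,W^k}(T^k_n)-N^{k,W^k}(T^k_{n-1})$; multiplying by $1\!\!1_{\{T^k_n\le t\}}$ with $t\le T$ gives the asserted formula. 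The care here lies entirely in correctly identifying which of the processes involved is continuous at $T^k_n$ and which jumps, and in invoking (\ref{ident1}) simultaneously at two consecutive sampling instants; I expect this bookkeeping to be the main obstacle.

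Third, the equivalence. Because $Z^k$ is a purely discontinuous $\mathbb{F}^k$-martingale with jumps on $\cup_n[[T^k_n,T^k_n]]$ and $A^{k,j}$ jumps only at $\{T^{k,j}_m\}\subset\{T^k_n\}$, we have $[Z^k,A^{k,j}](t)=\sum_{n\ge1}\Delta Z^k(T^k_n)\Delta A^{k,j}(T^k_n)1\!\!1_{\{T^k_n\le t\}}$, which by the second step equals the right-hand side of (\ref{wkak}); this proves the displayed equality in (\ref{wkak}). If $Z^k\to0$ weakly in $\mathbf{B}^2(\mathbb{F})$, Lemma \ref{lfund2} (with limit $0$) gives $[Z^k,A^{k,j}](t)\to[0,B^j](t)=0$ weakly in $L^1(\mathbb{P})$, i.e. (\ref{wkak}). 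Conversely, assume (\ref{wkak}). The uniform bound of the first step together with Lemma \ref{energylemma} makes $\{Z^k\}$ weakly relatively sequentially compact in $\mathbf{B}^2(\mathbb{F})$ with every limit point an $\mathbb{F}$-square-integrable martingale; if $Z^{k'}\to Z$ weakly along a subsequence, Lemma \ref{lfund2} gives $[Z,B^j](t)=\lim_{k'}[Z^{k'},A^{k',j}](t)=0$ for all $j$ and all $t\in[0,T]$, so by the predictable martingale representation theorem for the Brownian filtration $Z=\sum_j\int_0^\cdot\phi_j\,dB^j$ with $\int_0^\cdot\phi_j\,ds=[Z,B^j]\equiv0$, forcing $\phi_j\equiv0$ and $Z\equiv Z(0)=0$ (the constant being $\mathbb{E}Z(T)=\lim_{k'}\mathbb{E}Z^{k'}(T)=0$). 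Hence every weak subsequential limit of $\{Z^k\}$ vanishes, so the whole sequence converges weakly to $0$ in $\mathbf{B}^2(\mathbb{F})$, which completes the proof.
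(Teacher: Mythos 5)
Your proof is correct and follows essentially the same route as the paper: Proposition \ref{Dirichletenergy} together with a uniform bound on $\mathbb{E}[W^k,W^k](T)$ for the energy estimate, the telescoping computation based on (\ref{ident1}), the constancy of the pure-jump parts on $[[T^k_{n-1},T^k_n[[$ and the continuity of the compensators for the jump identity, and Lemmas \ref{energylemma}, \ref{lfund2} plus Brownian martingale representation for the equivalence. The only deviations are cosmetic: you use the martingale isometry and conditional Jensen where the paper invokes Burkholder--Davis--Gundy, and you organize the jump computation by substituting left limits rather than using $\Delta M^{k,W}=\Delta\delta^kW$ directly.
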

\begin{proof}
By applying Proposition \ref{Dirichletenergy} and the fact that $W\in \textbf{H}^2(\mathbb{F})$, we observe that $\sup_{k\ge 1}\mathbb{E}[\delta^kW,\delta^kW](T) < \infty$. Moreover, Burkholder-Davis-Gundy inequality yields the bound $\sup_{k\ge 1}\mathbb{E}[W^k,W^k|(T) < \infty$ and hence, $\sup_{k\ge 1} \mathbb{E} [Z^k , Z^k](T) < \infty.$ For a given $t\in [0,T]$, we have
\begin{eqnarray}
\nonumber \Delta Z^k(T^k_n) &=& \left(\Delta W^k(T^k_n) - \Delta M^{k,W}(T^k_n)\right)\\
\nonumber &=& \left( W^k(T^k_n) - W^k(T^k_n-) - \delta^k W(T^k_n) + \delta^k W(T^k_{n-1})\right) \\
&=&\label{por1} \left( W^k(T^k_n) - W^k(T^k_n-) - W^k(T^k_n) +  W^k(T^k_{n-1})\right)\\
\nonumber&=& \left(-W^k(T^k_n-) + W^k(T^k_{n-1})\right) \\
&=&\label{por2} \left(N^{k,W^k}(T^k_n) - N^{k,W^k}(T^k_{n-1})\right)
\end{eqnarray}
on $\{T^k_n \leq t \}$ for $n \ge 1$, where in (\ref{por1}) and (\ref{por2}), we have used identity (\ref{ident1}) and the fact that $N^{k,W^k}$ has continuous paths, respectively. The last statement (\ref{wkak}) is a simple application of Lemmas \ref{energylemma}, \ref{lfund2} and the predictable martingale representation of the Brownian motion.
\end{proof}

\begin{lemma}\label{MLDirich}
Let $\delta ^kW = M^{k,W} + N^{k,W}$ be the canonical $\mathbb{F}^k$-semimartingale decomposition for a Brownian martingale $W\in \mathbf{H}^2(\mathbb{F})$ starting at zero. Then,

$$
M^{k,W}\rightarrow W
$$
weakly in $\mathbf{B}^2(\mathbb{F})$ as $k\rightarrow \infty$. Moreover, $\lim_{k\rightarrow \infty}[\delta^k W, A^{k,j}](t)=[W,B^j](t)$ weakly in $L^1(\mathbb{P})$ for every $t\in [0,T], j=1,\ldots, d$.
\end{lemma}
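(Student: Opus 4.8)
The plan is to organize the proof around the auxiliary martingale $Z^k=W^k-M^{k,W}$, for which Lemma~\ref{carc_W} already does most of the work: it reduces the assertion $M^{k,W}\to W$ weakly in $\mathbf{B}^2(\mathbb{F})$ to (a) the weak $\mathbf{B}^2$-convergence $W^k\to W$, and (b) the weak $L^1(\mathbb{P})$-convergence $[Z^k,A^{k,j}](t)\to 0$ for every $t\in[0,T]$ and $1\le j\le d$ (which, by Lemma~\ref{carc_W}, is the same as $Z^k\to 0$ weakly in $\mathbf{B}^2$). Ingredient (a) I would dispatch quickly: since $W$ starts at zero, $W^k(\cdot)=\mathbb{E}[W(T)\,|\,\mathcal{F}^k_\cdot]$ is an $\mathbb{F}^k$-martingale starting at zero with $\sup_k\mathbb{E}[W^k,W^k](T)=\sup_k\mathbb{E}[W^k(T)^2]\le\mathbb{E}[W(T)^2]<\infty$, so Lemma~\ref{energylemma} makes $\{W^k;k\ge1\}$ weakly relatively compact in $\mathbf{B}^2(\mathbb{F})$ with $\mathbb{F}$-martingale limit points, while $\mathbb{F}^k\to\mathbb{F}$ weakly and Th.~1 in~\cite{coquet1} give $\sup_{0\le t\le T}|W^k(t)-W(t)|\to 0$ in probability, which together with the $\mathbf{B}^2$-boundedness pins down $W$ as the unique weak limit point; hence $W^k\to W$ weakly in $\mathbf{B}^2(\mathbb{F})$.

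The core is ingredient (b). Fix $g\in L^\infty(\mathbb{P})$, $t\in[0,T]$ and $1\le j\le d$, and write $g^k(\cdot):=\mathbb{E}[g\,|\,\mathcal{F}^k_\cdot]$. By Lemma~\ref{carc_W} (see (\ref{wkak})), $[Z^k,A^{k,j}](t)=\sum_{n\ge1}c^k_n\,\Delta A^{k,j}(T^k_n)\mathds{1}_{\{T^k_n\le t\}}$ with $c^k_n:=N^{k,W^k}(T^k_n)-N^{k,W^k}(T^k_{n-1})$, and each $c^k_n$ is $\mathcal{F}^k_{T^k_n-}$-measurable because $N^{k,W^k}$ is $\mathbb{F}^k$-predictable. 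Since every summand is $\mathcal{F}^k_{T^k_n}$-measurable and integrable (Cauchy--Schwarz together with $\sup_k\mathbb{E}[Z^k,Z^k](T)<\infty$ from Lemma~\ref{carc_W}), I would apply Fubini, replace $g$ by $g^k(T^k_n)$ in the $n$-th term, and subtract $g^k(T^k_n-)$, which is killed by $\mathbb{E}[\Delta A^{k,j}(T^k_n)\,|\,\mathcal{F}^k_{T^k_n-}]=0$ (valid because $A^{k,j}$ is a martingale and $T^k_n$ is $\mathbb{F}^k$-predictable); this yields
\begin{equation*}
\mathbb{E}\big[g\,[Z^k,A^{k,j}](t)\big]=\mathbb{E}\sum_{n\ge1}\Delta g^k(T^k_n)\,c^k_n\,\Delta A^{k,j}(T^k_n)\mathds{1}_{\{T^k_n\le t\}}.
\end{equation*}
Cauchy--Schwarz bounds the right-hand side by $\big(\mathbb{E}\sum_n|\Delta g^k(T^k_n)|^2|\Delta A^{k,j}(T^k_n)|^2\mathds{1}_{\{T^k_n\le t\}}\big)^{1/2}\big(\mathbb{E}[Z^k,Z^k](t)\big)^{1/2}$; the second factor is bounded uniformly in $k$, and since $|\Delta A^{k,j}(T^k_n)|^2\le\epsilon^2_k$ and $\mathbb{E}[g^k,g^k](T)\le\mathbb{E}[g^2]$, the first factor is at most $\epsilon_k\big(\mathbb{E}[g^k,g^k](t)\big)^{1/2}\le\epsilon_k\|g\|_{L^\infty}$. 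As $\epsilon_k\to0$ (since $\sum_k\epsilon^2_k<\infty$), we get $\mathbb{E}[g\,[Z^k,A^{k,j}](t)]\to0$ for all $g\in L^\infty$, i.e.\ (b).

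Putting (a), (b) and Lemma~\ref{carc_W} together gives $M^{k,W}=W^k-Z^k\to W$ weakly in $\mathbf{B}^2(\mathbb{F})$. For the final assertion, $N^{k,W}$ is continuous, so $\Delta\delta^kW=\Delta M^{k,W}$ and hence $[\delta^kW,A^{k,j}]=[M^{k,W},A^{k,j}]$; moreover $\{M^{k,W}\}$ is a sequence of square-integrable $\mathbb{F}^k$-martingales with $\sup_k\mathbb{E}[M^{k,W},M^{k,W}](T)=\sup_k\mathbb{E}[\delta^kW,\delta^kW](T)<\infty$ (Proposition~\ref{Dirichletenergy} together with $W\in\mathbf{H}^2(\mathbb{F})\subset\mathbf{Q}^2(\mathbb{F})$) converging weakly in $\mathbf{B}^2(\mathbb{F})$ to $W\in\mathbf{H}^2(\mathbb{F})$, so Lemma~\ref{lfund2} yields $[M^{k,W},A^{k,j}](t)\to[W,B^j](t)$ weakly in $L^1(\mathbb{P})$ for every $t\in[0,T]$ and $j=1,\dots,d$. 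I expect the main obstacle to be ingredient (b): the $L^1$-weak vanishing can only be read off by testing against an arbitrary $g\in L^\infty$, turning $g$ into the $\mathbb{F}^k$-martingale $g^k$, and extracting the decisive factor $\epsilon_k$ from the jump amplitudes of $A^{k,j}$; it is precisely here that the predictability of $N^{k,W^k}$ is essential, since it lets $c^k_n$ survive the conditioning against the symmetric, mean-zero increment $\Delta A^{k,j}(T^k_n)$.
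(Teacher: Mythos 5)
Your overall architecture is sound and close to the paper's: everything hinges on Lemma \ref{carc_W}, on the $\mathcal{F}^k_{T^k_n-}$-measurability of $c^k_n$ (predictability of $N^{k,W^k}$), on the identity $\mathbb{E}[\Delta A^{k,j}(T^k_n)\,|\,\mathcal{F}^k_{T^k_n-}]=0$, and on extracting the factor $\epsilon_k$ from the jump sizes together with $\sup_k\mathbb{E}[Z^k,Z^k](T)<\infty$. However, there is one genuine flaw: your justification of the key orthogonality step, namely that $\mathbb{E}[\Delta A^{k,j}(T^k_n)\,|\,\mathcal{F}^k_{T^k_n-}]=0$ ``because $A^{k,j}$ is a martingale and $T^k_n$ is $\mathbb{F}^k$-predictable,'' is false. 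The times $T^k_n$ are \emph{not} $\mathbb{F}^k$-predictable: the inter-arrival laws are absolutely continuous, so by Lemma \ref{angleLemma} the compensator $\langle A^{k,j},A^{k,j}\rangle$ is continuous, hence the jump times of $A^{k,j}$ are totally inaccessible; indeed, if $T^k_n$ were predictable, quasi-left-continuity would force $\Delta A^{k,j}(T^k_n)=0$, contradicting $|\Delta A^{k,j}(T^{k,j}_n)|=\epsilon_k$. Moreover, the general principle you implicitly invoke is not available at non-predictable times: for a martingale $M$ and a totally inaccessible stopping time $T$ one need not have $\mathbb{E}[\Delta M(T)\,|\,\mathcal{F}_{T-}]=0$ (compensated Poisson at its first jump time is a counterexample). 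So, as written, the step that kills the $g^k(T^k_n-)$ term is unsupported.

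The identity itself is nevertheless true for this particular $A^{k,j}$, and the correct argument is the one the paper uses: $\mathcal{F}^k_{T^k_n-}=\mathcal{F}^k_{T^k_{n-1}}\vee\sigma(T^k_n)$, and by the strong Markov property the sign $\sigma^{k,j}_n$ of the jump is a symmetric Bernoulli variable independent of the waiting times and of the past, whence $\mathbb{E}[\Delta A^{k,j}(T^k_n)\,|\,\mathcal{F}^k_{T^k_n-}]=0$ a.s. With that repair, your proof goes through: the Fubini/Cauchy--Schwarz manipulations, the bound $\epsilon_k\|g\|_{L^\infty}\big(\sup_k\mathbb{E}[Z^k,Z^k](T)\big)^{1/2}$, the identification of the limit of $M^{k,W}=W^k-Z^k$, and the final application of Lemma \ref{lfund2} (using $[\delta^kW,A^{k,j}]=[M^{k,W},A^{k,j}]$) are all correct. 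The only methodological difference from the paper is in ingredient (b): the paper shows, via the same measurability and orthogonality facts and Prop.~1.1 in \cite{lejan}, that $[Z^k,A^{k,j}]$ is itself an $\mathbb{F}^k$-martingale whose quadratic variation is bounded by $\epsilon_k^2[Z^k,Z^k](T)$, yielding convergence to zero \emph{strongly} in $\mathbf{B}^2(\mathbb{F})$, whereas your direct duality computation against $g\in L^\infty$ gives only the weak-$L^1$ vanishing for each fixed $t$ --- which is weaker but exactly what Lemma \ref{carc_W} requires, at the price of a slightly longer hands-on computation and without needing Le Jan's criterion.
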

\begin{proof}
Lemma \ref{carc_W} and the predictability of $N^{k,W^k}$ yield $\Delta Z^k(T^k_n)  1\!\!1_{\{T^k_n \le t\}}$ is $\mathcal{F}^k_{T^k_n-}$-measurable for each $n\ge 1$ and $t\ge 0$. By construction, $|\Delta A^{k,j}(T^k_{n})|>0$ only on the set $\{\aleph_1(\mathcal{A}^k_n) = j\}$. Then, the strong Markov property and the fact that $\sigma^{k,j}_n$ is independent from $\Delta T^{k,j}_n$ imply that
$$\mathbb{E}[\Delta A^{k,j}(T^k_n)|\mathcal{F}^k_{T^k_n-}]=0~a.s,~j=1,\ldots,d.$$
Therefore,

\begin{eqnarray*}
\mathbb{E}[\Delta Z^k(T^k_n)\Delta A^{k,j}(T^k_n)|\mathcal{F}^k_{T^k_n-}] &=&
\Delta Z^k(T^k_n)\mathbb{E}[\Delta A^{k,j}(T^k_n)|\mathcal{F}^k_{T^k_n-}]\\
&=& 0~a.s
\end{eqnarray*}
on $\{T^k_n \le t\}$ for each $n\ge 1$ and $t\ge 0$ and $1\le j\le d$. By applying Prop. 1.1 in \cite{lejan} on the pure jump process $[Z^k,A^{k,j}]$ given by (\ref{wkak}), we can safely state that this process is an $\mathbb{F}^k$-martingale for every $k\ge 1$. Lemma \ref{carc_W} yields

\begin{eqnarray*}
\sup_{k\ge 1} \mathbb{E} [Z^k , Z^k](T) =\sup_{k\ge 1} \mathbb{E} \sum_{n=1}^\infty  \left(N^{k,W^k}(T^k_n) - N^{k,W^k}(T^k_{n-1})\right)^2 1\!\!1_{\{T^k_n \leq T\}} < \infty,
\end{eqnarray*}
so that
$$
\mathbb{E} \Big[[ Z^k , A^{k,j}] , [Z^k , A^{k,j}]\Big](T) \le \epsilon^2_k \mathbb{E}[Z^k,Z^k](T)\le \epsilon^2_k\sup_{r\ge 1}\mathbb{E}[Z^r,Z^r](T)\rightarrow 0
$$
as $k\rightarrow \infty$. Therefore, $\lim_{k\rightarrow \infty}[Z^k , A^{k,j}] = 0$ strongly in $\mathbf{B}^2(\mathbb{F})$ so that Lemma \ref{carc_W} yields $\lim_{k\rightarrow \infty}Z^k=\lim_{k\rightarrow \infty}\big(W^k - M^{k,W}\big)=0$ weakly in $\mathbf{B}^2(\mathbb{F})$. The set $\{M^{k,W}; k\ge 1\}$ is $\mathbf{B}^2(\mathbb{F})$-weakly relatively sequentially compact where all limits points are square-integrable $\mathbb{F}$-martingales over $[0,T]$. The weak convergence $\lim_{k\rightarrow \infty}\mathbb{F}^k=\mathbb{F}$ yields $\lim_{k\rightarrow \infty}W^k=X$ strongly in $\mathbf{B}^1(\mathbb{F})$. This allows us to conclude $\lim_{k\rightarrow \infty}M^{k,W}=W$ weakly in $\mathbf{B}^2(\mathbb{F})$. As a consequence, we apply Lemma \ref{lfund2} to state that $\lim_{k\rightarrow \infty}[M^{k,W},A^{k,j}](t) = [W,B^j](t)$ weakly in $L^1(\mathbb{P})$ for each $t\in [0,T]$ and for every $j=1,\ldots, d$.
\end{proof}
At this point, we are finally able to finish the proof of Theorem \ref{strongDth1}.

\begin{corollary}\label{deltaisstable}
Under conditions of Theorem \ref{strongDth1}, the canonical imbedded discrete structure $\mathcal{Y} = \big((\delta^kX)_{k\ge 1}, \mathscr{D}\big)$ is stable for every $X\in \mathscr{R}(\mathbb{F})$ so that $\mathscr{R}(\mathbb{F})\subset \mathcal{W}(\mathbb{F})$. Moreover,

$$\langle X, B^j\rangle^{\mathcal{Y}}(t) = [X, B^j](t)~a.s; 0\le t\le T, j=1,\ldots,d.$$
\end{corollary}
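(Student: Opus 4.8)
The plan is to show that the canonical structure $\mathcal{Y}=\big((\delta^kX)_{k\ge1},\mathscr{D}\big)$ is \emph{stable}, i.e.\ that the $\mathbb{F}^k$-martingale component $M^{\mathcal{Y},k}$ in the decomposition $\delta^kX=X(0)+M^{\mathcal{Y},k}+N^{\mathcal{Y},k}$ of Proposition \ref{repdelta} converges weakly in $\mathbf{B}^2(\mathbb{F})$ to $M=\sum_{j=1}^d\int H_jdB^j$. By Lemma \ref{deltaconvergence} and Proposition \ref{Dirichletenergy}, $\mathcal{Y}$ is an imbedded discrete structure for $X$ with $\mathcal{E}^{2,\mathcal{Y}}(X)<\infty$, so every object below is well defined. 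Writing the canonical decomposition $X=X(0)+M+N$ with $N\in\mathbf{Q}^2_0(\mathbb{F})$ and $M(0)=N(0)=0$, linearity of conditional expectation gives $\delta^kX=X(0)+\delta^kM+\delta^kN$ on $[0,T]$; applying Proposition \ref{repdelta} separately to the finite-energy structures $\big((\delta^kM)_{k\ge1},\mathscr{D}\big)$ and $\big((\delta^kN)_{k\ge1},\mathscr{D}\big)$ yields $\mathbb{F}^k$-special semimartingale decompositions $\delta^kM=M^{k,M}+N^{k,M}$ and $\delta^kN=M^{k,N}+N^{k,N}$, and by uniqueness of the $\mathbb{F}^k$-special semimartingale decomposition one gets $M^{\mathcal{Y},k}=M^{k,M}+M^{k,N}$.

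For the martingale coming from $M$, I would invoke Lemma \ref{MLDirich} with $W=M\in\mathbf{H}^2(\mathbb{F})$, which gives $M^{k,M}\to M$ weakly in $\mathbf{B}^2(\mathbb{F})$. For the martingale coming from $N$, note that $N^{k,N}$ is continuous, so $\Delta M^{k,N}=\Delta\delta^kN$ and hence $[M^{k,N},M^{k,N}](T)=[\delta^kN,\delta^kN](T)$; Doob's $L^2$-inequality then gives
\[
\mathbb{E}\sup_{0\le t\le T}|M^{k,N}(t)|^2\le 4\,\mathbb{E}[\delta^kN,\delta^kN](T).
\]
By Proposition \ref{Dirichletenergy} the right-hand side is dominated by $C\,\mathbb{E}\sum_{\ell\ge1}|N(T^k_\ell)-N(T^k_{\ell-1})|^2\mathds{1}_{\{T^k_\ell\le T\}}$, and this vanishes as $k\to\infty$: indeed $N$ is of null $2$-variation and the (stopped) random mesh of the skeleton $\mathcal{T}$ tends to $0$ in $L^1$ by Lemma \ref{meshlemma} and \eqref{uniforTk}. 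Therefore $M^{k,N}\to0$ strongly in $\mathbf{B}^2(\mathbb{F})$, and adding the two contributions gives $M^{\mathcal{Y},k}\to M$ weakly in $\mathbf{B}^2(\mathbb{F})$; thus $\mathcal{Y}$ is stable.

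With stability in hand, Theorem \ref{towardsD1} shows that $\mathcal{D}^{\mathcal{Y}}X=H$ exists, so $X\in\mathcal{W}(\mathbb{F})$; since $X\in\mathscr{R}(\mathbb{F})$ was arbitrary, $\mathscr{R}(\mathbb{F})\subset\mathcal{W}(\mathbb{F})$. For the covariation identity, Corollary \ref{existenceCOV} applied to $X=X(0)+M+N$ (with $M\in\mathbf{H}^2(\mathbb{F})$ and $N\in\mathbf{B}^2(\mathbb{F})$ continuous) together with stability of $\mathcal{Y}$ gives $\langle X,B^j\rangle^{\mathcal{Y}}=[M,B^j]$ for $j=1,\ldots,d$. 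On the other hand, the covariation $[X,B^j]$ of the strong Dirichlet process $X$ with the semimartingale $B^j$ is bilinear, and $[N,B^j]=0$ by the Kunita--Watanabe (Cauchy--Schwarz) inequality since $N\in\mathbf{Q}^2_0(\mathbb{F})$ has null quadratic variation; hence $[X,B^j]=[M,B^j]+[N,B^j]=[M,B^j]=\langle X,B^j\rangle^{\mathcal{Y}}$, which is the asserted identity.

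The step I expect to be the main obstacle is establishing $\mathbb{E}[\delta^kN,\delta^kN](T)\to0$, i.e.\ upgrading ``$N$ is of null $2$-variation'' (a statement about sequences of partitions with vanishing mesh) to a statement about the \emph{specific} random skeleton partition $\mathcal{T}=\{T^k_m\}$: this requires combining the (multidimensional) energy estimate of Proposition \ref{Dirichletenergy} with the mesh controls of Lemma \ref{meshlemma} and \eqref{uniforTk}, with some care for the increment of $\mathcal{T}$ straddling the horizon $T$. Everything else is a direct application of the compactness and identification machinery already built for Brownian martingales in Lemmas \ref{carc_W}--\ref{MLDirich}.
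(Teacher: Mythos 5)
Your proposal is correct and takes essentially the same route as the paper: decompose $\delta^kX=X(0)+\delta^kM+\delta^kN$, handle the martingale part via Lemma \ref{MLDirich}, and annihilate the $\delta^kN$-contribution by combining the energy estimate (\ref{energyinequa}) applied to $N$ with the null $2$-variation of $N$ and the vanishing mesh of the skeleton (Lemma \ref{meshlemma}). The only cosmetic difference is that you control the martingale part $M^{k,N}$ of $\delta^kN$ directly in $\mathbf{B}^2$ (Doob plus the $L^2$-isometry), getting stability by definition, whereas the paper bounds $[\delta^kN,A^{k,j}]$ via Kunita--Watanabe and reads off both stability and $\langle X,B^j\rangle^{\mathcal{Y}}=[M^j,B^j]=[X,B^j]$ from the covariation/finite-energy machinery.
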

\begin{proof}
Let $X = X(0) + M + N$ be the Dirichlet decomposition of $X$ where by Brownian motion predictable representation, we can select $M  = \sum_{j=1}^d M^j$ where $M^j = \int H_jdB^j;~j=1,\ldots, d$ for $H_j\in L^2_a(\mathbb{P}\times Leb)$. By definition, $\delta^kX= X(0)+ \delta^kM+ \delta^k N$ and from Lemma~\ref{deltaconvergence}, we know that $\lim_{k\rightarrow \infty}\delta^kX= X$ weakly in $\mathbf{B}^2(\mathbb{F})$. By applying Lemma \ref{MLDirich} for the martingale $M$, for each $j=1,\ldots, d$, we have

\begin{eqnarray}
\nonumber\langle X, B^j\rangle^{\mathcal{Y}}(t) &=&\lim_{k\rightarrow \infty}[\delta^kM, A^{k,j}](t) + \lim_{k\rightarrow \infty}[\delta^kN, A^{k,j}](t)\\
\nonumber& &\\
\label{enn0}&=& [M^j, B^j](t) + \lim_{k\rightarrow \infty}[\delta^kN, A^{k,j}](t);~0\le t\le T,
\end{eqnarray}
weakly in $L^1(\mathbb{P})$ as long as the second component in the right-hand side of (\ref{enn0}) converges. Let us check the $L^1$-weak convergence of $[\delta^kN, A^{k,j}](t)$. By Proposition \ref{Dirichletenergy}, we have

$$
\mathcal{E}^{2,\mathcal{Y}}(X) = \sup_{k\ge 1}\mathbb{E}\sum_{n=1}^{\infty}(\Delta\delta^kX(T^k_n))^2 1\!\!1_{\{T^k_n\le T\}}< \infty.
$$
By applying Kunita-Watanabe inequality, we get for each $t\in [0,T]$

\begin{eqnarray}
\nonumber\mathbb{E}|[\delta^kN, A^{k,j}](t)|&\le& (\mathbb{E}[\delta^kN, \delta^kN](t))^{1/2} \times (\mathbb{E}[A^{k,j},A^{k,j}](t))^{1/2}\\
\nonumber& &\\
\label{enn1}&\le& C \Big(\mathbb{E}[\delta^kN,\delta^kN](t)\Big)^{1/2}\rightarrow 0
\end{eqnarray}
as $k\rightarrow \infty$, where $C = (\max_{1\le j\le p}\sup_{k\ge 1}\mathbb{E}[A^{k,j},A^{k,j}](T))^{1/2}< \infty$. In~(\ref{enn1}), we observe $\lim_{k\rightarrow \infty}\mathbb{E}[\delta^kN,\delta^kN](t)=0$ due to (\ref{energyinequa}) and the fact that

$$\lim_{|\tau|\rightarrow 0}\mathbb{E}\big(Q^2_\tau(N)\big)=0.$$
From~(\ref{enn0}) and (\ref{enn1}), we have $\langle X, B^j\rangle^{\mathcal{Y}} = [M^j, B^j] = [X,B^j]; j=1,\ldots, d$.
\end{proof}

\subsection{It\^o processes and BSDEs}\label{ITOSECTION}
In this section, we examine the connection between It\^o processes and backward SDEs (henceforth abbreviated to BSDEs). The key point is the existence of the limit

$$\lim_{k\rightarrow+\infty}\mathbb{U}^{\mathcal{Y},k,j}X; j=1,\ldots,d.$$
This type of question is related to some regularity properties of the non-martingale component associated with a weakly differentiable process $X$ in the sense of Definition \ref{defweakder}. See also Lemma \ref{findingSTABLE}.

\begin{theorem}\label{ItoP}
If there exists an imbedded discrete structure $\mathcal{Y}$ for $X\in\mathbf{B}^2(\mathbb{F})$ such that

\begin{equation}\label{ItoPcond}
\mathcal{E}^{2,\mathcal{Y}}(X)< \infty
\end{equation}
and
\begin{equation}\label{ItoPcond1}
\{\mathbb{U}^{\mathcal{Y},k,j}X; 1\le j\le d, k\ge 1\}~\text{is uniformly integrable in}~L^1_a(Leb\times \mathbb{P}),
\end{equation}
then $X$ is an It\^o process. In particular, any square-integrable It\^o process $X$ is uniquely written in the following differential form

\begin{equation}\label{itodif}
X(t)=X(0) + \sum_{j=1}^d  \int_0^t\mathcal{D}_jX(s)dB^{j}(s) + \int_0^t\mathcal{U} X(s)ds ;~0\le t\le T,
\end{equation}
where

\begin{equation}\label{infinitesimalG}
\mathcal{U}X:=\mathcal{U}^{\mathcal{Y}}X =\lim_{k\rightarrow\infty}\sum_{j=1}^d\mathbb{U}^{\mathcal{Y},k,j}X~\text{weakly in}~L^1_a(\mathbb{P}\times Leb)
\end{equation}
for every stable discrete structure $\mathcal{Y} = \big((X^k)_{k\ge 1}, \mathscr{D}\big)$ associated with $X$ satisfying (\ref{ItoPcond}) and (\ref{ItoPcond1}).
\end{theorem}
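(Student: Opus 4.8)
The plan is to combine Proposition \ref{repdelta}, the compactness afforded by the finite energy condition \eqref{ItoPcond}, and the identification of the martingale part $\mathcal{D}X=H$ from Theorem \ref{towardsD1}. First I would take a stable imbedded discrete structure $\mathcal{Y}=\big((X^k)_{k\ge 1},\mathscr{D}\big)$ for $X$ satisfying \eqref{ItoPcond} and \eqref{ItoPcond1} and write the $\mathbb{F}^k$-special semimartingale decomposition from Proposition \ref{repdelta},
$$
X^k(t)=X^k(0)+\sum_{j=1}^d\oint_0^t\mathbb{D}^{\mathcal{Y},k,j}X(s)dA^{k,j}(s)+\sum_{j=1}^d\int_0^t\mathbb{U}^{\mathcal{Y},k,j}X(s)ds=:X^k(0)+M^{\mathcal{Y},k}(t)+N^{\mathcal{Y},k}(t).
$$
Since $\mathcal{Y}$ is stable, $M^{\mathcal{Y},k}\to M$ weakly in $\mathbf{B}^2(\mathbb{F})$ and, by Corollary \ref{existenceCOV} and Theorem \ref{towardsD1}, $M=\sum_{j=1}^d\int H_jdB^j$ with $H=\mathcal{D}X$. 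Because $X^k\to X$ weakly in $\mathbf{B}^2(\mathbb{F})$, the drift parts must also converge: $N^{\mathcal{Y},k}\to V_X:=X-X(0)-M$ weakly in $\mathbf{B}^2(\mathbb{F})$.

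The heart of the argument is to upgrade this to the statement that $V_X$ is absolutely continuous with density in $L^1_a(\mathbb{P}\times Leb)$. Here I would exploit the uniform integrability assumption \eqref{ItoPcond1}: the family $\{\sum_{j=1}^d\mathbb{U}^{\mathcal{Y},k,j}X; k\ge 1\}$ is uniformly integrable in $L^1_a(\mathbb{P}\times Leb)$, hence by the Dunford--Pettis theorem it is relatively weakly compact in $L^1_a(\mathbb{P}\times Leb)$. Pass to a subsequence along which $\sum_{j=1}^d\mathbb{U}^{\mathcal{Y},k,j}X\to \zeta$ weakly in $L^1_a(\mathbb{P}\times Leb)$ for some $\zeta\in L^1_a(\mathbb{P}\times Leb)$. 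Then for each $t$ and each bounded $\mathbb{F}$-adapted $g$ one has $\mathbb{E}\,g\,N^{\mathcal{Y},k}(t)=\mathbb{E}\,g\int_0^t\sum_j\mathbb{U}^{\mathcal{Y},k,j}X(s)ds\to \mathbb{E}\,g\int_0^t\zeta(s)ds$; comparing with the weak $\mathbf{B}^2$-limit $N^{\mathcal{Y},k}(t)\to V_X(t)$ gives $V_X(t)=\int_0^t\zeta(s)ds$ a.s.\ for every $t$, so $V_X$ has a.c.\ paths and $X$ is an It\^o process with drift $\zeta$. Since $V_X$ is intrinsic to $X$, its density is a.e.\ unique, so in fact the \emph{whole} sequence converges: $\sum_j\mathbb{U}^{\mathcal{Y},k,j}X\to \mathcal{U}X:=\zeta$ weakly in $L^1_a(\mathbb{P}\times Leb)$, which is \eqref{infinitesimalG}. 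Uniqueness of the differential form \eqref{itodif} follows from uniqueness of the semimartingale decomposition together with Brownian predictable representation, and the fact that the limit does not depend on the choice of stable structure $\mathcal{Y}$ (subject to \eqref{ItoPcond}--\eqref{ItoPcond1}) follows because $\mathcal{D}X$ is structure-independent by Theorem \ref{towardsD1} and hence so is $V_X$, forcing all weak limits of the drift densities to coincide a.e.

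For the converse direction — that any square-integrable It\^o process $X=X(0)+\sum_j\int H_jdB^j+\int_0^t b(s)ds$ with $b\in L^1_a(\mathbb{P}\times Leb)$ admits \emph{some} structure satisfying \eqref{ItoPcond} and \eqref{ItoPcond1} — I would use the canonical structure $\mathcal{Y}=\big((\delta^kX)_{k\ge 1},\mathscr{D}\big)$. An It\^o process is in particular a continuous square-integrable semimartingale, so by Corollary \ref{deltaisstable} (or Lemma \ref{findingSTABLE}) the canonical structure is stable with $\mathcal{E}^{2,\mathcal{Y}}(X)<\infty$; it then remains to verify \eqref{ItoPcond1} for this structure, which amounts to controlling $\mathbb{U}^{\mathcal{Y},k,j}X$ in terms of $b$ via \eqref{ger1} and the fact that the $\mathbb{F}^k$-dual predictable projection of the a.c.\ drift $\int_0^\cdot b\,ds$ along $\mathbb{F}^k$ is again absolutely continuous with a density that is a conditional expectation of $b$ (hence a uniformly integrable family, by conditional Jensen and the uniform integrability of the single function $b$).

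The main obstacle I anticipate is exactly this last verification: showing that $\{\mathbb{U}^{\mathcal{Y},k,j}X\}$ is uniformly integrable for the canonical structure when $X$ is a generic It\^o process, because $\mathbb{U}^{\mathcal{Y},k,j}X$ involves the $\mathbb{F}^k$-dual predictable projection of the \emph{whole} increment $\Delta\delta^kX$, which mixes the martingale and drift parts; one must argue that the martingale contribution to $\mathbb{U}^{\mathcal{Y},k,j}X$ (a conditional expectation of a jump of a martingale given the strictly-before $\sigma$-field) averages out in a way compatible with uniform integrability, and that the drift contribution is a conditional average of $\int b\,ds$ over a small random interval, which is uniformly integrable by a maximal-function/Lebesgue-differentiation argument using \eqref{uniforTk} and Lemma \ref{meshlemma}. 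The forward direction (It\^o process $\Rightarrow$ the limiting form) is comparatively routine once Dunford--Pettis is in place.
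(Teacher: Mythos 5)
Your identification of the drift via Dunford--Pettis and testing against bounded functionals is exactly the paper's mechanism, and your observation that uniqueness of the decomposition forces full-sequence convergence and structure-independence of $\mathcal{U}X$ matches the paper's second assertion. However, there is a genuine gap in your proof of the \emph{first} assertion: the hypothesis of the theorem is only the existence of an imbedded discrete structure with finite energy (\ref{ItoPcond}) and uniform integrability (\ref{ItoPcond1}); it does \emph{not} assume stability. You begin by taking a \emph{stable} structure and invoking Corollary \ref{existenceCOV} and Theorem \ref{towardsD1} to identify $M^{\mathcal{Y},k}\to M=\sum_j\int H_j\,dB^j$ with $H=\mathcal{D}X$. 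But stability is only defined for processes already written as $X=X(0)+M+N$ with $M\in\mathbf{H}^2(\mathbb{F})$ and $N$ continuous, and Theorem \ref{towardsD1} presupposes precisely such a decomposition --- which is the bulk of what must be proven when all one knows is $X\in\mathbf{B}^2(\mathbb{F})$. The paper avoids this circularity: finite energy gives $\sup_k\mathbb{E}[M^{\mathcal{Y},k},M^{\mathcal{Y},k}](T)<\infty$, so by Lemma \ref{energylemma} the martingale components are $\mathbf{B}^2$-weakly relatively compact with \emph{all limit points $\mathbb{F}$-martingales}; together with Dunford--Pettis for the drift densities and $X^k\to X$ weakly, one passes to a common subsequence and obtains $X=X(0)+\sum_j\int H_j dB^j+\int_0^\cdot\gamma(s)ds$ (using Brownian predictable representation), with no stability assumption. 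Your argument, as written, proves the statement only under the additional hypothesis that $X$ already admits a martingale-plus-continuous decomposition and a stable structure.

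The second deviation is a misreading of scope rather than an error: the clause ``for every stable discrete structure satisfying (\ref{ItoPcond}) and (\ref{ItoPcond1})'' is conditional --- the theorem does not claim that every square-integrable It\^o process possesses such a structure, and the paper never verifies (\ref{ItoPcond1}) for the canonical structure $\big((\delta^kX)_{k\ge1},\mathscr{D}\big)$. The paper's proof of the second assertion simply takes an arbitrary stable structure satisfying the two conditions, uses Theorem \ref{strongDth1} to get $\mathcal{D}X=H$, uses Proposition \ref{repdelta} and stability to get $\sum_j\int_0^\cdot U^{\mathcal{Y},k,j}X\,d\langle A^{k,j},A^{k,j}\rangle\to\int_0^\cdot V\,ds$ weakly in $\mathbf{B}^2(\mathbb{F})$, and identifies the $L^1_a$-weak limit of $\sum_j\mathbb{U}^{\mathcal{Y},k,j}X$ with $V$ by testing against $g1\!\!1_{[0,t]}$. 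So the step you flag as the ``main obstacle'' (uniform integrability of $\mathbb{U}^{\mathcal{Y},k,j}X$ for the canonical structure of a generic It\^o process) is not required for the theorem; leaving it open does not hurt, but the effort is misplaced, while the genuinely needed repair is in the first part as described above.
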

\begin{proof}
Throughout this proof, $C$ is a generic constant which may defer from line to line. Let $X\in \mathbf{B}^2(\mathbb{F})$ be a Wiener functional admitting an imbedded discrete structure $\mathcal{Y} = \big((X^k){k\ge 1},\mathscr{D}\big)$ satisfying (\ref{ItoPcond}) and~(\ref{ItoPcond1}). Then $\mathbb{D}^{\mathcal{Y},k,j}X;~j=1,\ldots, d$ and $\sum_{i=1}^d\mathbb{U}^{\mathcal{Y},k,i}X$ are weakly relatively sequentially compact sequences in $L^2_a(\mathbb{P}\times Leb)$ and $L^1_a(\mathbb{P}\times Leb)$, respectively. Then we shall extract common weakly convergent subsequences. With a slight abuse of notation, we still denote them by $\mathbb{D}^{\mathcal{Y},k,j}X;~j=1,\ldots, d$ and $\sum_{i=1}^d\mathbb{U}^{\mathcal{Y},k,i}X$. By applying the same argument used in the proof of Theorem \ref{strongDth1} along the convergent subsequence $\{\mathbb{D}^{\mathcal{Y},k,j}X; k\ge 1\}$, there exists a vector of adapted processes $H_j\in L^2_a(\mathbb{P}\times Leb); j=1,\ldots, d$ and there exists $N\in \mathbf{B}^2(\mathbb{F})$ such that

$$X(t)  = X(0) + \sum_{j=1}^d\int_0^tH_j(s)dB^j(s) + N(t);~0\le t\le T.$$
We claim that $N(t)=\int_0^t\gamma(s)ds;~0\le t\le T$ where $\gamma:=\lim_{k\rightarrow\infty}\sum_{i=1}^d\mathbb{U}^{\mathcal{Y},k,i}X$ weakly in $L^1_a(\mathbb{P}\times Leb)$. From Proposition~\ref{repdelta}, we already know that

$$\lim_{k\rightarrow \infty}\sum_{j=1}^d\int_0^\cdot U^{\mathcal{Y},k,j}X(s) d\langle A^{k,j},A^{k,j}\rangle(s) = N(\cdot)$$
weakly in $\textbf{B}^2(\mathbb{F})$. By construction, $N$ is $\mathbb{F}$-adapted and it has continuous paths. Hence, in order to show that $N$ and $\int\gamma(s)ds$ are indistinguishable, one only has to check they are modifications from each other. It is sufficient to check for a given $g\in L^{\infty}$ ($\mathcal{F}_T$ - measurable) and $0\le t\le T$,

\begin{equation}\label{only}
\mathbb{E}g N(t) = \mathbb{E}g\int_0^t\gamma(s)ds.
\end{equation}
But this is obvious. Indeed, by the very definition and the uniqueness of the weak limit we have

\begin{eqnarray*}
\mathbb{E}g\int_0^t\sum_{j=1}^dU^{\mathcal{Y},k,j} X(s)d\langle A^{k,j},A^{k,j}\rangle(s) &=& \mathbb{E}g\int_0^t\sum_{j=1}^d\mathbb{U}^{\mathcal{Y},k,j}X(s)ds\\
&\rightarrow&\mathbb{E}g\int_0^t\gamma(s)ds= \mathbb{E}gN(t).
\end{eqnarray*}
This proves (\ref{only}). Now let us check the second part of the theorem. Let us assume that $X$ is a square-integrable It\^o process of the form

$$
X(t) = X(0) + \sum_{j=1}^d\int_0^t H_j(s)dB^j(s) + \int_0^tV(s)ds;~0\le t\le T.
$$
Let us now check that $(\mathcal{D}X, \mathcal{U}X)$ exists. As a strong Dirichlet process, from Theorem \ref{strongDth1}, we already know that $X\in\mathscr{R}(\mathbb{F})$ is weakly differentiable and $\mathcal{D} X= (H_1, \ldots, H_d)$ in $L^2_a(\mathbb{P}\times Leb)$.

\textbf{Claim}~$\mathcal{U}X = V$. Let us consider an arbitrary stable imbedded discrete structure $\mathcal{Y} = \big((X^k)_{k\ge1}, \mathscr{D}\big)$ associated with $X$. To shorten notation, let us denote $M:= \sum_{j=1}^d\int H_{j}dB^j$ and $Y := \int V(s)ds$. The $\mathbb{F}^k$-semimartingale decomposition based on $\mathcal{Y}$ is

$$X^k(t) = X^k(0) + \sum_{j=1}^d\oint_0^t \mathbb{D}^{\mathcal{Y},k,j}X(s)dA^{k,j}(s) + \sum_{j=1}^d\int_0^tU^{\mathcal{Y},k,j}X(s)d\langle A^{k,j}, A^{k,j}\rangle(s)$$
for $0\le t\le T$. Because $\mathcal{Y}$ is stable, then we shall use Proposition~\ref{repdelta} to state that

$$
\sum_{j=1}^d\int_0^\cdot U^{\mathcal{Y},k,j}X(s)d\langle A^{k,j}, A^{k,j}\rangle(s)\rightarrow  Y
$$
weakly in $\textbf{B}^2(\mathbb{F})$ as $k\rightarrow \infty$. In particular, by taking $g\in L^{\infty}(\mathcal{F}_T)$ and $t\in [0,T]$, we shall consider the bounded linear functional $S = g1\!\!1_{[0,t]}\in \text{M}^2(\mathbb{F})$ to get

\begin{eqnarray*}
\mathbb{E}g\int_0^t\sum_{j=1}^d\mathbb{U}^{\mathcal{Y},k,j}X(s)ds&=&\mathbb{E}g\int_0^t\sum_{j=1}^dU^{\mathcal{Y},k,j}X(s)d\langle A^{k,j}, A^{k,j}\rangle(s)\\
&\rightarrow& \mathbb{E}g\int_0^t V(s)ds
\end{eqnarray*}
as $k\rightarrow \infty$ for each for $g\in L^{\infty}(\mathcal{F}_T)$ and $t\in [0,T]$. Hence, (\ref{itodif}) holds true and we conclude the proof.
\end{proof}

Of course, it is already known that any It\^o process

$$X = X(0) + \sum_{j=1}^d\int H_j dB^j + \int Z ds$$
is completely characterized by $(H_1, \ldots, H_d,Z)$. The main message of Theorem \ref{ItoP} is that
$$(H_1, \ldots, H_d,Z) = (\mathcal{D}_1X, \ldots, \mathcal{D}_dX,\mathcal{U}X)$$
where $(\mathcal{D}_1X, \ldots, \mathcal{D}_dX,\mathcal{U}X)$ can be intrinsically constructed by means of any stable imbedded discrete structure $\mathcal{Y}$ satisfying (\ref{ItoPcond}) and (\ref{ItoPcond1}).


The differential operator $\mathcal{U}X$ basically describes the mean of any square-integrable It\^o process in an infinitesimal time interval

$$
\mathbb{E}X(t)  \sim \mathbb{E}X(0) + t \mathbb{E}~\mathcal{U}X(t)\quad \text{for small}~t > 0.
$$
Of course, when $X(\cdot) = g(\cdot,W(\cdot))$ is a smooth transformation of a Markovian diffusion $W$, then $\mathcal{U}X(t) = \partial_t g(t,W(t))+\mathcal{L}g(t,W(t))$, where $\mathcal{L}$ is the infinitesimal generator of $W$. This justifies the following definition.

\begin{definition}\label{wigdef}
We say that a Wiener functional $X\in\mathcal{W}(\mathbb{F})$ admits a stochastic infinitesimal generator if $\mathcal{U}X$ exists.
\end{definition}

We advocate the existence of the infinitesimal generator as a ``heat-type operator'' requires strong pathwise regularity in the sense of \cite{dupire,cont}.

\begin{remark}\label{dupirecomp}
If $F$ has pathwise $\mathbb{C}^{1,2}\big(\Lambda\big)$-regularity (in the sense of \cite{dupire}) and $X = F(Z)$ for a continuous $\mathbb{F}$-semimartingale $Z$, then
\begin{equation}\label{splitting}
\nabla^hF(Z) + \frac{1}{2}\text{tr}\nabla^{v,2}F(Z)\frac{d [Z,Z]}{dt} = \mathcal{U}X=\mathcal{U}^\mathcal{Y}X
\end{equation}
for every stable imbedded discrete structure $\mathcal{Y}$ w.r.t $X=F(Z)$, where $\nabla^h$ and $\nabla^{v,2}$ denote the horizontal and second order vertical derivatives in the sense of pathwise calculus. The existence of the splitting on the left-hand side of (\ref{splitting}) requires severe regularity either because $\mathcal{U}X$ may not exist or one of the functionals $\nabla^hF$ and $\text{tr}\nabla^{v,2}F$ may not exist. In fact, we advocate the important object is $U^{\mathcal{Y},k}X$ where $\mathcal{Y}$ ranges over all imbedded discrete structures for a given possibly non-smooth $X$. See \cite{LEAO_OHASHI2017.1,LEAO_OHASHI2017.2}) for details.
\end{remark}
Under strong regularity conditions, we shall provide a local characterization of $\mathcal{U}X$. See Proposition \ref{pointwisederProp} in the Appendix.

\subsection{Variational Representation of BSDEs} Let us now briefly illustrate the role of $(\mathcal{D}X,\mathcal{U}X)$ in the BSDEs. Let $\mathbb{H}$ be the Cameron-Martin space associated with the one-dimensional Wiener measure and let $L^2_a(\Omega; \mathbb{H})$ be the $\mathbb{P}$-equivalent class of $\mathbb{H}$-valued random variables such that

$$\mathbb{E}\|u\|^2_{\mathbb{H}}:=\mathbb{E}\int_0^T |\dot{u}(s)|^2ds< \infty$$
where the Radon-Nikodym derivative $\dot{u}$ is $\mathbb{F}$-adapted. Let us consider a well-posed BSDE (see \cite{pardoux}):

\begin{equation} \label{BSDE}
Y(t) = \xi + \int_{t}^T g (r , Y(r) , Z(r)) dr - \sum_{j=1}^d \int_{t}^T Z^j(r) d B^{j}(r), \quad 0 \leq t \leq T,
\end{equation}
where $g : \Omega \times [0,T] \times \Bbb{R} \times \Bbb{R}^d \rightarrow \mathbb{R}$ is the generator of the BSDE and $\xi \in L^2 (\mathcal{F}_T)$. A strong solution of the BSDE (\ref{BSDE}) is an $(\Bbb{R} \times \Bbb{R}^d)$-valued $\mathbb{F}$-adapted process $(Y,Z)$ which satisfies (\ref{BSDE}) almost surely. For a given $\xi\in L^2(\mathcal{F}_T)$ and under suitable technical assumptions on $g$, it is well known there exists a unique solution $(Y,Z)$.

Now, for a given pair $(g,\xi)$, let $Y$ be a square-integrable It\^o process such that $Y(T)=\xi$~a.s and we set

$$\Lambda (Y,g,\xi)(t):= Y(t) -Y(0) + \int_0^t g(s,\mathcal{D}Y(s),Y(s)\big)ds,$$

$$\mathbb{Y}^{(g,\xi)}(t):=\int_0^t\Lambda (Y,g,\xi)(s)ds;0\le t\le T.$$
In the sequel, when we write $\mathbb{Y}^{(g,\xi)}$ it is implicitly assumed that $Y$ is a square-integrable It\^o process such that $Y(T)=\xi$ where $(g,\xi)$ is given.
\begin{theorem}\label{bsdecor}
Let $\xi\in L^2(\mathcal{F}_T)$ be a fixed terminal condition. A pair $(Y,Z)$ is a strong solution of (\ref{BSDE}) if, and only if, $Y\in \mathcal{W}(\mathbb{F})$, $Z=\mathcal{D}Y$ and

\begin{equation} \label{FSDE}
\left\{\begin{array}{l}\mathcal{U} Y(t) + g (t , Y(t) , \mathcal{D} Y(t)) = 0, \quad 0 \leq t < T,~a.s \\
Y(T) = \xi~a.s.
\end{array}\right.
\end{equation}
In particular, $Y(0) = \mathbb{E}[\xi] - \mathbb{E}\int_0^T\mathcal{U}Y(s)ds$. Moreover, for a given pair $(g,\xi)$, a square-integrable It\^o process $Y$ is a solution of (\ref{FSDE}) if, and only if,

\begin{equation}\label{j6}
\mathbb{Y}^{(g,\xi)} \in \arg\min_{X\in L^2_a(\Omega;\mathbb{H}); X(T)=\mathbb{Y}^{(g,\xi)}(T)}\mathbb{E}\|X\|^2_{\mathbb{H}},
\end{equation}
or, in other words,

\begin{equation}\label{minen}
\sum_{j=1}^d \mathbb{E}\int_0^T\int_0 ^s|\mathcal{D}_ jY(r)|^2drds= \min_{X\in L^2_a(\Omega;\mathbb{H}); X(T)=\mathbb{Y}^{(g,\xi)}(T)}\mathbb{E}\int_0^T|\dot{X}(s)|^2ds.
\end{equation}
\end{theorem}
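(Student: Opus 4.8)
The plan is to establish the two equivalences in turn, passing freely between the backward formulation~\eqref{BSDE}, the forward differential form $(\mathcal{D}Y,\mathcal{U}Y)$, and the variational problem~\eqref{j6} by means of Theorems~\ref{towardsD1}, \ref{strongDth1} and \ref{ItoP}. \textbf{Step 1 (equivalence of \eqref{BSDE} and \eqref{FSDE}).} If $(Y,Z)$ is a strong solution of \eqref{BSDE}, I rewrite it forwards as $Y(t)=Y(0)+\sum_{j=1}^d\int_0^tZ^j(s)\,dB^j(s)-\int_0^tg(s,Y(s),Z(s))\,ds$, which exhibits $Y$ as a square-integrable It\^o process; Theorem~\ref{ItoP} then forces $\mathcal{D}_jY=Z^j$ for $j=1,\dots,d$ and $\mathcal{U}Y=-g(\cdot,Y,Z)=-g(\cdot,Y,\mathcal{D}Y)$, and $Y\in\mathscr{R}(\mathbb{F})\subset\mathcal{W}(\mathbb{F})$ by Theorem~\ref{strongDth1}; evaluating \eqref{BSDE} at $t=T$ gives $Y(T)=\xi$. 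Conversely, if $Y\in\mathcal{W}(\mathbb{F})$ admits a stochastic infinitesimal generator, $Z=\mathcal{D}Y$, and \eqref{FSDE} holds, then Theorem~\ref{ItoP} gives $Y(t)=Y(0)+\sum_{j}\int_0^t\mathcal{D}_jY(s)\,dB^j(s)+\int_0^t\mathcal{U}Y(s)\,ds$; substituting $\mathcal{U}Y=-g(\cdot,Y,\mathcal{D}Y)$, rearranging, and using $Y(T)=\xi$ recovers \eqref{BSDE} with $Z=\mathcal{D}Y$. Finally $Y(0)=\mathbb{E}[\xi]-\mathbb{E}\int_0^T\mathcal{U}Y(s)\,ds$ follows by taking expectations in $\xi=Y(T)=Y(0)+\sum_j\int_0^T\mathcal{D}_jY\,dB^j+\int_0^T\mathcal{U}Y\,ds$, the stochastic integrals having zero mean since $\mathcal{D}_jY\in L^2_a(\mathbb{P}\times Leb)$ and $\mathcal{U}Y\in L^1_a(\mathbb{P}\times Leb)$.

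\textbf{Step 2 (reducing \eqref{j6} to a martingale property).} Fix a square-integrable It\^o process $Y$ with $Y(T)=\xi$ and write $Y(t)=Y(0)+M_Y(t)+\int_0^t\mathcal{U}Y(s)\,ds$ with $M_Y:=\sum_j\int_0^\cdot\mathcal{D}_jY\,dB^j$ (Theorem~\ref{ItoP}). Inserting this into the definition of $\Lambda(Y,g,\xi)$ yields $\Lambda(Y,g,\xi)(t)=M_Y(t)+\int_0^t\rho(s)\,ds$ with $\rho:=\mathcal{U}Y+g(\cdot,Y,\mathcal{D}Y)$, so that $\dot{\mathbb{Y}}^{(g,\xi)}=\Lambda(Y,g,\xi)$ is a continuous semimartingale with martingale part $M_Y$ and bounded-variation part $\int_0^\cdot\rho(s)\,ds$. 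By uniqueness of the semimartingale decomposition in the Brownian filtration, $\Lambda(Y,g,\xi)$ is an $\mathbb{F}$-martingale if and only if $\int_0^t\rho(s)\,ds\equiv 0$, i.e. $\rho=0$ $Leb\times\mathbb{P}$-a.e., which together with the standing $Y(T)=\xi$ is exactly \eqref{FSDE}. It therefore remains to show that $\mathbb{Y}^{(g,\xi)}$ lies in $\arg\min$ in \eqref{j6} iff $\dot{\mathbb{Y}}^{(g,\xi)}$ is an $\mathbb{F}$-martingale; \eqref{minen} will then follow from It\^o's isometry applied to $M_Y$, namely $\mathbb{E}\|\mathbb{Y}^{(g,\xi)}\|_{\mathbb{H}}^2=\mathbb{E}\int_0^T|M_Y(s)|^2\,ds=\sum_{j=1}^d\mathbb{E}\int_0^T\int_0^s|\mathcal{D}_jY(r)|^2\,dr\,ds$ using the orthogonality of $B^1,\dots,B^d$.

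\textbf{Step 3 (the Hilbert-space minimization).} I identify $L^2_a(\Omega;\mathbb{H})$ (whose elements vanish at $0$) with $\mathcal{L}:=L^2_a(\mathbb{P}\times Leb)$ via $X\mapsto\dot X$, so $\mathbb{E}\|X\|_{\mathbb{H}}^2=\|\dot X\|_{\mathcal{L}}^2$ and the constraint $X(T)=\mathbb{Y}^{(g,\xi)}(T)=:\Xi$ becomes $\mathcal{I}(\dot X)=\Xi$ for the bounded operator $\mathcal{I}:\mathcal{L}\to L^2(\mathcal{F}_T)$, $\mathcal{I}(u)=\int_0^Tu(s)\,ds$. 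Since $\mathbb{Y}^{(g,\xi)}$ is feasible, the feasible set is a nonempty closed affine subspace, so the minimizer exists, is unique by strict convexity, and is characterized by $\dot X\perp\ker\mathcal{I}$, i.e. $\dot X\in(\ker\mathcal{I})^\perp=\overline{\mathrm{Range}\,\mathcal{I}^*}$. A direct computation gives $\mathcal{I}^*\zeta=\big(\mathbb{E}[\zeta\mid\mathcal{F}_\cdot]\big)$, whose range is precisely the space of square-integrable $\mathbb{F}$-martingales on $[0,T]$, and this range is closed (the martingale property passes to $L^2(\mathbb{P}\times Leb)$-limits upon extracting an a.e.\ convergent subsequence). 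Hence the minimizer in \eqref{j6} is the unique $\mathbb{F}$-martingale $u^\ast$ with $\mathcal{I}(u^\ast)=\Xi$; in particular $\mathbb{Y}^{(g,\xi)}\in\arg\min$ iff $\dot{\mathbb{Y}}^{(g,\xi)}$ is such a martingale, which by Step~2 is \eqref{FSDE}, and \eqref{minen} follows as noted.

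\textbf{Main obstacle.} Step~1 and the bookkeeping of Step~2 are routine given Theorem~\ref{ItoP}; the technical core is Step~3, and within it the identification $(\ker\mathcal{I})^\perp=\{\text{square-integrable }\mathbb{F}\text{-martingales}\}$ together with the closedness of that subspace --- equivalently, the fact that the first-order optimality condition $\dot{\mathbb{Y}}^{(g,\xi)}\perp\ker\mathcal{I}$ genuinely forces $\dot{\mathbb{Y}}^{(g,\xi)}$ to be a martingale, not merely an $\mathcal{L}$-limit of martingales. A hands-on alternative I would keep in reserve, avoiding closedness altogether, is to test optimality against the perturbations $u=\mathbf{1}_{[a,T]}w-\tfrac{T-a}{T-b}\mathbf{1}_{[b,T]}w$ with $w\in L^\infty(\mathcal{F}_a)$, $a<b<T$: these satisfy $\mathcal{I}(u)=0$, and the resulting identities show that $\psi(a):=\tfrac1{T-a}\mathbb{E}\big[\int_a^T\dot{\mathbb{Y}}^{(g,\xi)}(s)\,ds\mid\mathcal{F}_a\big]$ is a martingale; differentiating $(T-a)\psi(a)=\mathbb{E}[\Xi\mid\mathcal{F}_a]-\mathbb{Y}^{(g,\xi)}(a)$ in $a$ and invoking uniqueness of the semimartingale decomposition yields $\dot{\mathbb{Y}}^{(g,\xi)}=\psi$. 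A minor standing point is the integrability needed for $\mathbb{Y}^{(g,\xi)}\in L^2_a(\Omega;\mathbb{H})$, so that \eqref{j6} is a bona fide projection; this is part of the convention attached to the symbol $\mathbb{Y}^{(g,\xi)}$.
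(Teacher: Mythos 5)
Your proof is correct and follows essentially the same route as the paper: Step 1 is exactly the paper's use of Theorem \ref{ItoP} to pass between (\ref{BSDE}) and (\ref{FSDE}), and Steps 2--3 reproduce the paper's Proposition \ref{basicres}, namely the Hilbert-space projection argument for the terminal-value constraint (your $\mathcal{I}$ is the paper's adjoint $m^*$ after the identification $X\mapsto\dot X$ of $L^2_a(\Omega;\mathbb{H})$ with $L^2_a(\mathbb{P}\times Leb)$) together with the identification of $(\ker m^*)^\perp$ with the processes whose densities are $\mathbb{F}$-martingales. The only cosmetic difference is that you make the closedness of that martingale subspace explicit, which the paper treats implicitly through its direct-sum decomposition, so no substantive divergence or gap.
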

\begin{proof}
See Section \ref{Appendixbsdecor} (Appendix) for the proof of this result.
\end{proof}

\section{Appendix}
In this section, we present the proofs of Lemmas \ref{angleLemma}, \ref{jfiltration}, \ref{explicit}, \ref{QnormFBM} and
Theorem \ref{bsdecor}.
\subsection{Proof of Lemma \ref{angleLemma}}\label{AppendixangleLemma}
\begin{proof}
In the sequel, we denote $F_k$ as the distribution function of $\Delta T^{k,1}_1$ and $f_k = F'_k$. The fact that $A^{k,j}$ is an $\mathbb{F}^{k,j}$-square integrable martingale follows from e.g \cite{germano} and the fact that
$$\mathbb{E}\sup_{0\le t\le T}|A^{k,j}(t)|^p\le \mathbb{E}\sup_{0\le t\le T}|B^j(t)|^p < \infty$$
for every $p\ge 1$. By definition, the angle bracket $\langle A^{k,j},A^{k,j}\rangle $ is the $\mathbb{F}^{k,j}$-dual predictable projection of the quadratic variation $[ A^{k,j}, A^{k,j}] $. Let us define

\[
\mu_{A^{k,j}} \left([0,t] , i \right) = \sum_{n=1}^\infty 1\!\!1_{ \{ \sigma^{k,j}_n=i\}}  1\!\!1_{ \{ T^{k,j}_n \leq t \}}, \quad i \in \{-1,1\},
\]
By definition,
\[
A^{k,j}(t) = \int_{0}^t \sum_{i \in \{-1, 1 \} } \epsilon_ k  \mu_{A^{k,j}} \left(ds , i \right) ~ ~ t \geq 0  ~~ \mbox{and} ~~ j=1, \ldots, d.
\]
Moreover, by writing $\Delta T^{k,j}_{n+1} = \Delta T^{k,j}_{n+1} - \Delta T^{k,j}_{n}$, we have

\begin{eqnarray*}
\nonumber\mathbb{P} \left[ \Delta T^{k,j}_{n+1} \in [0,t] , \sigma^{k,j}_{n+1} =i \mid \mathcal{F}^{k,j}_{T^{k,j}_n} \right] &=& \int_{0}^t \mathbb{P} \left[\sigma^{k,j}_{n+1} =i  \mid \mathcal{F}^{k,j}_{T^{k,j}_n}, \Delta T^{k,j}_{n+1}=s \right] f_k(s) ds \\
&=& \int_{0}^t \mathbb{P} \left[\sigma^{k,j}_{n+1} =i \right] f_k(s) ds\\
&=& \frac{1}{2} \int_{0}^t f_k(s) ds,
\end{eqnarray*}
for $i \in \{-1,1\}, t\ge 0$ and an integer $n\geq 0$. It follows from (\cite{bremaud}, Theorem 7 pp. 238) that the $\mathbb{F}^{k,j}$-dual predictable projection of the random measure $\mu_{A^{k,j}}$ is given by

\begin{equation} \label{dpprm}
(\mu_{A^{k,j}})^{p,k} ( [0,t], i) = \frac{1}{2} \int_{0}^t \sum_{n=0}^\infty  \frac{f_k (s-T^{k,j}_n)}{1-F_k (s-T^{k,j}_n)}   1\!\!1_{ \{ T^{k,j}_n < s \leq T^{k,j}_{n+1}\}} ds,
\end{equation}
for every $0\le t\le T$. By definition, the quadratic variation is

\begin{eqnarray} \nonumber
 [A^{k,j},A^{k,j} ](t) &=& \sum_{n=1}^\infty \mid \epsilon_ k  \mid^2 \mathds{1}_{\{T^{k,j}_n \leq t \} } = \int_{0}^t \sum_{ i \in \{-1 , 1\}  } \mid \epsilon_ k i \mid^2 \mu_{A^{k,j}} \left(ds , i \right) \\ \label{rsbA}
 &=&  \int_{0}^t  \mid \epsilon_ k  \mid^2  \sum_{ i \in \{-1 , 1\}  } \mu_{A^{k,j}} \left(ds , i \right), \quad 0 \leq t \leq T.
\end{eqnarray}
Hence, by applying equations (\ref{rsbA}) and (\ref{dpprm}), we obtain that

\begin{eqnarray*}
\langle A^{k,j},A^{k,j}\rangle (t) &=& \int_{0}^t  \mid \epsilon_ k  \mid^2  \sum_{ i \in \{-1 , 1\}   } (\mu_{A^{k,j}})^{p,k} (ds, i) \\
&=& \int_{0}^t  \mid \epsilon_ k  \mid^2  \sum_{ i \in \{-1 , 1\} }  \frac{1}{2}  \sum_{n=0}^\infty  \frac{f_k (s-T^{k,j}_n)}{1-F_k (s-T^{k,j}_n)}   1\!\!1_{ \{ T^{k,j}_n < s \leq T^{k,j}_{n+1}\}} ds \\
&=&  \mid \epsilon_ k  \mid^2 \int_{0}^t  \sum_{n=0}^\infty  \frac{f_k (s-T^{k,j}_n)}{1-F_k (s-T^{k,j}_n)}   1\!\!1_{ \{ T^{k,j}_n < s \leq T^{k,j}_{n+1}\}} ds.
\end{eqnarray*}
\end{proof}

\begin{lemma}\label{l.estimativas}
Let $Z_1,\ldots,Z_n$ be an i.i.d. sequence of absolutely continuous positive random variables. Then,
for every $\alpha\in(0,1)$ and $r\ge 1$, we have
$$\mathbb{E}\left[\left(\vee_{i=1}^{n} Z_i\right)^r\right] \leq  \Big(\mathbb{E}[Z_1^{r/(1-\alpha)}]\Big)^{(1-\alpha)}n^{1-\alpha}.$$
\end{lemma}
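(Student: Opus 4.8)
The plan is to reduce the moment of the maximum to a sum of moments via two elementary facts: a deterministic bound converting a maximum into an $\ell^{1/(1-\alpha)}$-type sum, and Jensen's inequality for the concave map $x\mapsto x^{1-\alpha}$. Absolute continuity of the $Z_i$ is not really used beyond ensuring the quantities are well defined; if $\mathbb{E}[Z_1^{r/(1-\alpha)}]=\infty$ the asserted inequality is trivial, so I may assume this moment is finite.

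First I would record the deterministic inequality: for nonnegative reals $a_1,\ldots,a_n$ and any exponent $\beta\in(0,1)$,
$$
\max_{1\le i\le n} a_i=\Big(\max_{1\le i\le n} a_i^{1/\beta}\Big)^{\beta}\le\Big(\sum_{i=1}^n a_i^{1/\beta}\Big)^{\beta}.
$$
Applying this with $\beta:=1-\alpha$ and $a_i:=Z_i^{r}$ (note $\big(\vee_{i=1}^n Z_i\big)^r=\max_i Z_i^r$ since $r\ge 1$ and the $Z_i$ are nonnegative), I obtain the pathwise bound
$$
\Big(\vee_{i=1}^{n} Z_i\Big)^{r}\le\Big(\sum_{i=1}^n Z_i^{r/(1-\alpha)}\Big)^{1-\alpha}.
$$

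Next I would take expectations and use that $x\mapsto x^{1-\alpha}$ is concave on $[0,\infty)$ because $1-\alpha\in(0,1)$; Jensen's inequality then gives
$$
\mathbb{E}\Big[\Big(\sum_{i=1}^n Z_i^{r/(1-\alpha)}\Big)^{1-\alpha}\Big]\le\Big(\mathbb{E}\sum_{i=1}^n Z_i^{r/(1-\alpha)}\Big)^{1-\alpha}=\Big(n\,\mathbb{E}[Z_1^{r/(1-\alpha)}]\Big)^{1-\alpha},
$$
where the last equality uses that the $Z_i$ are identically distributed. Combining the two displays yields
$$
\mathbb{E}\Big[\Big(\vee_{i=1}^{n} Z_i\Big)^{r}\Big]\le n^{1-\alpha}\Big(\mathbb{E}[Z_1^{r/(1-\alpha)}]\Big)^{1-\alpha},
$$
which is the claim. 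There is no substantial obstacle here — the only points to be careful about are that the conversion $\max a_i\le(\sum a_i^{1/\beta})^\beta$ requires $\beta<1$ (hence the role of $\alpha>0$), and that Jensen is applied in the correct (concave) direction; independence of the $Z_i$ is in fact not needed, only equidistribution.
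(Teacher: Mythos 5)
Your proof is correct, and it takes a genuinely different route from the paper's. You bound the maximum pathwise by an $\ell^{1/(1-\alpha)}$-type sum, $\big(\vee_i Z_i\big)^r\le\big(\sum_i Z_i^{r/(1-\alpha)}\big)^{1-\alpha}$, and then apply Jensen's inequality for the concave map $x\mapsto x^{1-\alpha}$; this uses only equidistribution, so, as you note, neither independence nor absolute continuity is really needed, and the argument is shorter and more general. The paper instead exploits both hypotheses: it writes $\mathbb{E}\big[(\vee_i Z_i)^r\big]=n\int_0^\infty t^r (G(t))^{n-1}g(t)\,dt$ using the density of the maximum of i.i.d.\ absolutely continuous variables, and then applies H\"older's inequality with exponents $p=\alpha^{-1}$, $q=(1-\alpha)^{-1}$, splitting off $\big(\mathbb{E}[Z_1^{r/(1-\alpha)}]\big)^{1-\alpha}$ and the factor $\big(\int g\,G^{(n-1)/\alpha}\big)^{\alpha}=\big(\tfrac{1}{p(n-1)+1}\big)^{\alpha}$, which it then crudely bounds so as to land on the same $n^{1-\alpha}$ factor. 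In principle the paper's computation retains the sharper intermediate quantity $n\big(\tfrac{\alpha}{n-1+\alpha}\big)^{\alpha}$ before discarding it, but since the lemma is only stated and used with the $n^{1-\alpha}$ bound, your more elementary derivation proves exactly the same statement with less machinery.
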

\begin{proof}
  Let $\alpha\in(0,1)$, and let $g$ be the density of $Z_1$, with $G$ being its distribution function. Then, by taking $p=\alpha^{-1}$ and $q=(1-\alpha)^{-1}$. H\"older inequality yields
  \begin{align*}
  \mathbb{E}\left[\left(\vee_{i=1}^{n} Z_i\right)^r\right]&= n\int_0^{+\infty}t^{r}(G(t))^{n-1}g(t)\,dt\\
           &\le n\Big(\int_0^{+\infty}[t^{r}(g(t))^{1-\alpha}]^{q}\,dt\Big)^{\frac{1}{q}}\Big(\int_0^{+\infty}[(g(t))^{\alpha}(G(t))^{n-1}]^p\,dt\Big)^{\frac{1}{p}} \\
           &=n\Big(\int_0^{+\infty}t^{qr}g(t)\,dt\Big)^{\frac{1}{q}}\Big(\int_0^{+\infty}g(t)(G(t))^{(n-1)p}\,dt\Big)^{\frac{1}{p}} \\
           &=n\Big(\mathbb{E}[Z_1^{qr}]\Big)^{\frac{1}{q}}\Big(\frac{1}{p(n-1)+1}\Big)^{\alpha}\\
           &\le n^{1-\alpha}\Big(\mathbb{E}[Z_1^{qr}]\Big)^{\frac{1}{q}},
  \end{align*}
	where in the last line we used the fact that the function $n\mapsto n/(p(n-1)+1)$ is decreasing for $n\geq 1$ and $p>1$.
\end{proof}
 \subsection{Proof of Lemma \ref{jfiltration}}\label{Appendixjfiltration}
 \begin{proof}
We fix $k\ge 1$. At first, we observe that $\mathbb{P} \left[ \cap_{j=1}^d \cap_{n=1}^\infty \{T^{k,j}_n < \infty\} \right] = 1$, then we clearly have $T^k_n < \infty$ a.s for every $n\ge 1$. Moreover, the fact for each $j=1\ldots, d$, $(\Delta T^{k,j}_n)^\infty_{n=1}$ is an i.i.d sequence of strictly positive variables will all finite moments and mean equals $\epsilon_k$ yield $T^{k,j}_n\uparrow+\infty$ as $n\rightarrow+\infty$. This implies $T^k_n\uparrow+\infty$ as $n\rightarrow+\infty$. We will show that $T^k_n$ is a sequence of $\mathbb{F}^k$-stopping times by using induction over $n$. It is clear that $T^k_1$ is an $\mathbb{F}^k$-stopping time. Suppose now that $T_{n-1}^k$ is an $\mathbb{F}^k$-stopping time. For a given $t\ge 0 $, we observe that
\begin{eqnarray*}
\{T_n^k >t\} &=& \left[\{T_n^k > t\}\cap\{T_{n-1}^k>t\}\right]\cup\left[\{T_n^k > t\}\cap\{T_{n-1}^k\leq t\}\right]\\
&=&\{T_{n-1}^k>t\}\cup\left[\{T_n^k > t\}\cap\{T_{n-1}^k\leq t\}\right].
\end{eqnarray*}
The induction assumption yields $\{T_{n-1}^k>t\}\in \mathcal{F}^k_t$. We now show that $\{T_n^k > t\}\cap\{T_{n-1}^k\leq t\}\in \mathcal{F}^k_t$.
Note that
$$\{T_n^k > t\}\cap\{T_{n-1}^k\leq t\} = \bigcup_{I\in\mathcal{I}_{n-1}}\left[\bigcap_{(j,m)\in I}\{T^{k,j}_m \leq t\}\cap\bigcap_{(j,m)\in I^c}\{T^{k,j}_m>t\}\right],$$
where $\mathcal{I}_{n-1} = \{I\subset\{1,\ldots,d\}\times\mathbb{N}~\text{with}~\#I = n-1; ~\hbox{ if~}(j,m)\in I,\hbox{~then,~}\forall \tilde{m}\leq m, (j,\tilde{m})\in I\}.$ Since each $\{T^{k,j}_m \leq t\}\in \mathcal{F}^k_t$ and each $\{T^{k,j}_m>t\}\in \mathcal{F}^k_t$, it is clear that $\{T_n^k > t\}\cap\{T_{n-1}^k\leq t\}\in \mathcal{F}_t^k$, and thus, $T^k_n$ is a $\mathbb{F}^k$-stopping time.

Let us now fix $t\ge 0$. By the very definition, $A^{k,j}(s \wedge t) = A^{k,j}(s\wedge T^k_n)$ on $\{T^k_n \le t < T^k_{n+1}\}$ for every $j\in \{1, \ldots, d\}$, $s\ge 0$ and $n\ge 0$. Moreover,

$$\mathcal{F}^k_t = \bigotimes_{j=1}^d\mathcal{F}^{k,j}_t= \bigotimes_{j=1}^d\sigma(A^{k,j}(s\wedge t); s\ge 0)$$
so that

\begin{eqnarray}
\nonumber\mathcal{F}^k_t \cap \{T^k_n \le t < T^k_{n+1}\}&=& \bigotimes_{j=1}^d\sigma(A^{k,j}(s\wedge t); s\ge 0)\cap \{T^k_n \le t< T^k_{n+1}\}\\
\nonumber& &\\
\label{dtf}&=& \bigotimes_{j=1}^d\sigma(A^{k,j}(s\wedge T^k_n); s\ge 0)\cap \{T^k_n \le t< T^k_{n+1}\}
\end{eqnarray}
We set $\mathcal{G}^k_n:=\sigma(A^{k,j}(s\wedge T^k_n); s\ge 0; 1\le j \le d )$. By construction $T^k_n$ is $\mathcal{G}^k_n$-measurable for every $n\ge 1$ and

$$\mathcal{F}^k_t  = \Big\{\bigcup_{\ell=0}^\infty D_\ell \cap \{T^{k}_{\ell} \le t < T^{k}_{\ell+1}\}; D_\ell\in \mathcal{G}^{k}_{\ell}~\text{for}~\ell \ge 0 \Big\},~t\ge 0.$$
Therefore, $\mathbb{F}^k$ is a filtration of discrete type (see e.g~\cite{jacod}). By Cor. 5.57 in \cite{he}, we conclude $\mathcal{F}^k_{T^k_n} = \mathcal{G}^k_n$ for each $n\ge 0$ and (\ref{dtf}) concludes the proof.
\end{proof}
\subsection{Proof of Lemma \ref{explicit}}\label{Appendixexplicit}
\begin{proof}
For simplicity, we assume $X^k(0)=0$. At first, recall that $\mathcal{F}^k_{T^k_{n+1}-} = \mathcal{F}^k_{T^k_n}\vee \sigma(T^k_{n+1})$ (see Corollary 5.57 in \cite{he}). We claim the $\mathbb{F}^k$-dual predictable projection of $X^k$ has the representation

$$\big(X^k\big)^{p,k}(t)=\sum_{j=1}^d\int_0^tU^{\mathcal{Y},k,j}X(s)d\langle A^{k,j},A^{k,j}\rangle(s); 0\le t\le T,$$
where $\langle A^{k,j},A^{k,j}\rangle $ is the angle bracket of $A^{k,j}$. Let us fix $C\in \mathcal{P}^k$. We observe that


\begin{eqnarray*}
\mathbb{E}\int_0^T \mathds{1}_{C}(s)dX^{k}(s) &=& \sum_{j=1}^d\mathbb{E}\int_0^T\mathds{1}_C(s)\mathcal{D}^{\mathcal{Y},k,j}X(s)dA^{k,j}(s)\\
&=&\mathbb{E}\sum_{j=1}^d\int_0^T \mathds{1}_C(s)U^{\mathcal{Y},k,j}X(s)d[A^{k,j},A^{k,j}](s)\\
&=& \mathbb{E}\sum_{j=1}^d\int_0^T \mathds{1}_C(s)U^{\mathcal{Y},k,j}X(s)d\langle A^{k,j},A^{k,j}\rangle(s)\\
&=&\mathbb{E}\int_0^T\mathds{1}_{C}(s)d(X^k)^{p,k}(s).
\end{eqnarray*}
This proves the first claim. Now, let us denote

$$Q^k_j=\cup_{\ell=1}^\infty [[T^{k,j}_\ell,T^{k,j}_\ell]]; j=1\ldots, d.$$
It is important to observe that the support of $\mu_{[A^{k,j}]}$ is $\text{supp}~(\mu_{[A^{k,j}]}) = Q^k_j$ for every $k\ge 1$ and $j=1,\ldots, d$. Let us fix an integer $n\ge 0$. For a given $E\in \mathcal{F}^k_{T^k_{n+1}-}$, we can choose (see Th 31 in \cite{bremaud}, page 337) an $\mathbb{F}^k$-predictable process $H$ such that

$$H(T^k_{n+1}) = \mathds{1}_E~a.s$$
and it is null outside the stochastic interval $]]T^k_n,T^k_{n+1}]] = \{(t,\omega); T^k_n(\omega)< t\le T^k_{n+1}(\omega)\}$. Then, it follows from the first part that
$$
\mathbb{E}\Big[\mathds{1}_E \Delta X^k(T^k_{n+1})\mathds{1}_{\{T^k_{n+1}\le T\}}\Big] =
\mathbb{E}\int_0^T H(s)dX^k(s)
$$
$$=\sum_{j=1}^d\mathbb{E}\int_0^TH(s)U^{\mathcal{Y},k,j}X(s)d[A^{k,j},A^{k,j}]
= \sum_{j=1}^d\int_{]]T^k_n,T^k_{n+1}]]}HU^{\mathcal{Y},k,j}Xd\mu_{[A^{k,j}]}
$$
so that 
\begin{eqnarray*}
\mathbb{E}\Big[\mathds{1}_E \Delta X^k(T^k_{n+1})\mathds{1}_{\{T^k_{n+1}\le T\}}\Big]&=&\sum_{j=1}^d\int_{[[T^k_{n+1},T^k_{n+1}]]\cap Q^k_j}HU^{\mathcal{Y},k,j}Xd\mu_{[A^{k,j}]}\\
&+& \sum_{j=1}^d\int_{[[T^k_{n+1},T^k_{n+1}]]\cap (Q^k_j)^c}HU^{\mathcal{Y},k,j}Xd\mu_{[A^{k,j}]}\\
&=&\sum_{j=1}^d\int_{[[T^k_{n+1},T^k_{n+1}]]\cap Q^k_j}HU^{\mathcal{Y},k,j}Xd\mu_{[A^{k,j}]}\\
&=&\epsilon^2_k\sum_{j=1}^d\int_{E\cap\{T^k_{n+1}\le T, \aleph_1(\eta^k_{n+1})=j\}} U^{\mathcal{Y},k,j}X(T^k_{n+1})d\mathbb{P},
\end{eqnarray*}
where $\aleph_1$ is given by (\ref{alephamap}). At this point, it is important to observe that the support of $U^{\mathcal{Y},k,j}X$ is $Q^k_j$ so that one can choose a version of the conditional expectation $\mathbb{E}_{\mu_{[A^{k,j}]}}\big[\mathcal{D}^{\mathcal{Y},k,j}X/\Delta A^{k,j}|\mathcal{P}^k\big]$ such that

$$
U^{\mathcal{Y},k,j}X(T^k_{n+1})= U^{\mathcal{Y},k,j}X(T^k_{n+1})\mathds{1}_{\{\aleph_1(\eta^k_{n+1})=j\}}~a.s
$$
for each $1\le j\le d$. Since $E\in \mathcal{F}^k_{T^k_{n+1}-}$ is arbitrary, the above computation shows (\ref{ger1}) holds true.
\end{proof}

\begin{lemma}\label{gknlemma}
If $g\in L^\infty(\mathbb{P})$, then

$$\sup_{n\ge 1}|\mathbb{E}[g|\mathcal{F}^k_{T^{k}_n}] - \mathbb{E}[g|\mathcal{F}^k_{T^{k}_{n-1}}]|1\!\!1_{\{T^{k}_n \le T\}}\rightarrow 0$$
in $L^p(\mathbb{P})$ as $k\rightarrow\infty$ for every $p > 1$.
\end{lemma}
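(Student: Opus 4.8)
The plan is to compare the discrete conditional expectations with a continuous-time martingale sampled along the grid $\mathcal T$. By homogeneity we may assume $\|g\|_\infty\le 1$ (the case $g\equiv 0$ is trivial). Writing $g^k_n:=\mathbb E[g\mid\mathcal F^k_{T^k_n}]$ and
$$\Xi_k:=\sup_{n\ge 1}|g^k_n-g^k_{n-1}|\,\mathds{1}_{\{T^k_n\le T\}},$$
one has $0\le\Xi_k\le 2$ for all $k$, so it suffices to prove $\Xi_k\to 0$ in probability; by dominated convergence this yields convergence in $L^p$ for every $p\in[1,\infty)$, in particular for $p>1$. Introduce the $\mathbb F$-martingale $h(t):=\mathbb E[g\mid\mathcal F_t]$, $0\le t\le T$: since $\mathbb F$ is the augmented Brownian filtration, $h$ has continuous paths, and Doob's inequality gives $\mathbb E\sup_{0\le t\le T}|h(t)|^2<\infty$, so $h$ is a continuous Wiener functional. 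Because $T^k_n$ is an $\mathbb F$-stopping time and $\mathcal F^k_{T^k_n}\subseteq\mathcal F_{T^k_n}$, optional sampling together with the tower property give $g^k_n=\mathbb E\big[h(T^k_n)\mid\mathcal F^k_{T^k_n}\big]$ on $\{T^k_n\le T\}$.

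With this in hand I would split, for each $n\ge 1$ with $T^k_n\le T$ (so also $T^k_{n-1}<T^k_n\le T$),
$$|g^k_n-g^k_{n-1}|\le|g^k_n-h(T^k_n)|+|h(T^k_n)-h(T^k_{n-1})|+|h(T^k_{n-1})-g^k_{n-1}|.$$
The first and third summands are handled exactly as the term $J^k_1$ in the proof of Lemma~\ref{deltaconvergence}: since $\mathbb F^k\subseteq\mathbb F$, $\mathbb F^k\to\mathbb F$ weakly by \eqref{weakfiltration}, and $h$ is continuous and $\mathbb F$-adapted, Th.~1 in \cite{coquet1} gives
$$R_k:=\sup_{n\ge 1}|g^k_n-h(T^k_n)|\,\mathds{1}_{\{T^k_n\le T\}}\longrightarrow 0\quad\text{in probability.}$$
For the middle summand, let $\delta_k:=\sup_{n\ge 1}\Delta T^k_n\,\mathds{1}_{\{T^k_n\le T\}}$ and let $\omega_h$ be the (random) modulus of continuity of $h$ on $[0,T]$, which decreases to $0$ almost surely by uniform continuity of the path $h$; then $|h(T^k_n)-h(T^k_{n-1})|\le\omega_h(\delta_k)$ whenever $T^k_n\le T$. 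Lemma~\ref{meshlemma} with $q=1$ gives $\mathbb E\delta_k\to 0$, hence $\delta_k\to 0$ in probability, and a routine $\varepsilon$-estimate (first choose $\eta$ with $\mathbb P(\omega_h(\eta)>\varepsilon)$ small, then $k$ large so that $\mathbb P(\delta_k>\eta)$ is small) yields $\omega_h(\delta_k)\to 0$ in probability. Therefore $\Xi_k\le 2R_k+\omega_h(\delta_k)\to 0$ in probability, which completes the proof.

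The only genuinely non-routine ingredient is the convergence $R_k\to 0$, which rests on the stability of $\mathbb F^k$-optional projections under weak convergence of filtrations (Th.~1 in \cite{coquet1}) and crucially uses the continuity of $h$; the rest is bookkeeping. One point deserves emphasis, since it explains why the mesh estimate is needed at all: one might be tempted to argue that $h^k(t):=\mathbb E[g\mid\mathcal F^k_t]$ is constant on each stochastic interval $[[T^k_{n-1},T^k_n[[$, so that $g^k_n-g^k_{n-1}$ is a single jump of $h^k$ controlled directly by $\sup_{0\le t\le T}|h^k(t)-h(t)|$. This is false: in the jumping filtration $\mathbb F^k$ a square-integrable martingale need not be piecewise constant — it carries the predictable continuous drift $-N^{k,Y^k}$ of \eqref{smdec} between consecutive $T^k_n$'s — so one must compare with the continuous-time object $h$ sampled along $\mathcal T$, which is exactly what forces the simultaneous use of \eqref{weakfiltration} and Lemma~\ref{meshlemma}.
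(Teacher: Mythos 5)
Your proof is correct and follows essentially the same route as the paper's: a triangle-inequality decomposition through the continuous Brownian martingale $h(t)=\mathbb{E}[g|\mathcal{F}_t]$ sampled at $T^k_{n-1},T^k_n$, with the filtration-approximation terms controlled by Th.~1 of \cite{coquet1} (weak convergence $\mathbb{F}^k\to\mathbb{F}$) and the increment of $h$ over the random mesh controlled by Lemma~\ref{meshlemma} together with path continuity, then boundedness of $g$ to upgrade to $L^p$. The only differences are cosmetic (you pass through convergence in probability plus dominated convergence, and add a correct cautionary remark about $\mathbb{F}^k$-martingales not being piecewise constant), so no changes are needed.
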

\begin{proof}
Let $Z(t) = \mathbb{E}[g|\mathcal{F}_t]; 0\le t\le T$. Since $\mathbb{F}^k\rightarrow \mathbb{F}$ weakly as $k\rightarrow +\infty$, then the proof is straightforward. For sake of completeness, we give the details here. We write

\begin{eqnarray*}
\big|\mathbb{E}[g|\mathcal{F}^k_{T^k_n}] - \mathbb{E}[g|\mathcal{F}^k_{T^k_{n-1}}]\big|\mathds{1}_{\{T^k_n\le T\}}&\le & \big|\mathbb{E}[g|\mathcal{F}^k_{T^k_n}] - \mathbb{E}[g|\mathcal{F}_{T^k_n}]\big|\mathds{1}_{\{T^k_n\le T\}}\\
& &\\
&=& \big|\mathbb{E}[g|\mathcal{F}_{T^k_n}] - \mathbb{E}[g|\mathcal{F}_{T^k_{n-1}}]\big|\mathds{1}_{\{T^k_n\le T\}}\\
& &\\
&+& \big|\mathbb{E}[g|\mathcal{F}_{T^k_{n-1}}] - \mathbb{E}[g|\mathcal{F}^k_{T^k_{n-1}}]\big|\mathds{1}_{\{T^k_n\le T\}}\\
& &\\
&\le& 2\sup_{0\le t\le T}\big|\mathbb{E}[g|\mathcal{F}^k_t] - \mathbb{E}[g|\mathcal{F}_t]\big|\\
 & &\\
&+& \sup_{k\ge 1}\big|\mathbb{E}[g|\mathcal{F}_{T^k_n}] - \mathbb{E}[g|\mathcal{F}_{T^k_{n-1}}]\big|\mathds{1}_{\{T^k_n\le T\}}\\
& &\\
&=:& I^k_1 + I^k_2.
\end{eqnarray*}
The weak convergence $\mathbb{F}^k\rightarrow \mathbb{F}$ as $k\rightarrow +\infty$ and the fact that $g$ is bounded yield $I^k_1\rightarrow 0$ in $L^p(\mathbb{P})$ for every $p\ge 1$ as $k\rightarrow+\infty$. By using Lemma \ref{meshlemma}, the continuity of the martingale $\mathbb{E}[g|\mathcal{F}_\cdot]$ and the boundedness of $g$, one can easily check that $I^k_2\rightarrow 0$ in $L^p(\mathbb{P})$ for every $p\ge 1$ as $k\rightarrow+\infty$.
\end{proof}

\subsection{Proof of Lemma \ref{QnormFBM}}\label{AppendixQnormFBM}
\begin{proof}
Take $pH >1, 1-H < \beta < 1/2$, $0< \epsilon < \beta-1+ H$ and $p(H-\epsilon)> 1$. Let us take an arbitrary random partition of stopping times $\tau = (S_n)_{n\ge 1}$. It is well-known (see e.g the proof of Lemma 1.17.1 in \cite{mishura}) there exists $G_{T,\epsilon,\beta}\in \cap_{q\ge 1}L^q(\mathbb{P})$ and a deterministic constant $C$ such that

$$|B_H(t) - B_H(s)|\le C |t-s|^{(H-\epsilon)}G_{T,\epsilon,\beta}~a.s$$
for every $t,s\in [0,T]$. This allows us to estimate
$$|B_H(S_n) - B_H(S_{n-1})|^p\le C^p |S_n - S_{n-1}|^{p(H-\epsilon)}G_{T,\epsilon,\beta}^p~a.s$$
for every $n\ge 1$. More importantly,

\begin{eqnarray}
\nonumber\sum_{n=1}^\infty |B_H(S_n) - B_H(S_{n-1})|^p\mathds{1}_{\{S_n\le T\}}&\le& C^p \sum_{n=1}^\infty| S_n- S_{n-1}|^{p(H-\epsilon)}\mathds{1}_{\{S_n\le T\}}G_{T,\epsilon,\beta}^p\\
\nonumber& &\\
\nonumber&\le& C^p G^p_{T,\epsilon,\beta}\sup_{t_i\in \Pi}\sum_{t_i\in \Pi}|t_n - t_{n-1}|^{p(H-\epsilon)}\\
\nonumber& &\\
\label{ph1}&\le& C^p G^p_{T,\epsilon,\beta}T^{p(H-\epsilon)}~a.s
\end{eqnarray}
for every $n\ge 1$, where sup above is taken over all deterministic partitions $\Pi$ of $[0,T]$. Inequality (\ref{ph1}) is due to the fact that $p(H-\epsilon)> 1$. Then, $\|B_H\|^p_{\mathbf{Q}^p}\le C^pT^{p(H-\epsilon)} \mathbb{E}G^p_{T,\epsilon,\beta} < \infty$.
\end{proof}
\subsection{Local description of the weak infinitesimal generator}\label{AppendixlocalDESCRIPTION}
The name weak can also be justified by the following result. Not surprisingly, under strong regularity conditions, there exists a local description of the infinitesimal generator. In the sequel, $T^{t_0,\epsilon,j}$ is the hitting time given by (\ref{localHT}).

\begin{proposition}\label{pointwisederProp}
Let $X$ be a real-valued square-integrable $\mathbb{F}$-semimartingale of the form

$$X(t) = X(0) + \sum_{j=1}^d\int_0^t\mathcal{D}_jX(s)dB^j(s) + \int_0^t \mathcal{U}X(s)ds; t\ge 0.$$
where $\mathcal{D}_jX$ has absolutely continuous paths for $1\le j\le d$. Assume that both the weak derivative $\nabla^w \mathcal{D}_jX$ and $\mathcal{U}X$ are a.s continuous at $t_0$ for $1\le j\le d$. Then, for any $j\in \{1, \ldots, d\}$

$$
\lim_{\epsilon\rightarrow 0^+}\mathbb{E}\Bigg[\frac{X(t_0 + T^{t_0,\epsilon,j}) - X(t_0)}{T^{t_0,\epsilon,j}}\big|\mathcal{F}_{t_0}\vee \sigma (T^{t_0,\epsilon,j})\Bigg] = \mathcal{U}X(t_0)\quad \text{in probability}.
$$
\end{proposition}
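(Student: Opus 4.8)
The plan is to reduce everything to a pointwise/local version of the It\^o decomposition of $X$ near the fixed time $t_0$, exploiting that the stopping time $T^{t_0,\epsilon,j}$ localizes the $j$-th Brownian coordinate around $t_0$ on an interval of length of order $\epsilon^2$. First I would write, using the differential form \eqref{itodif},
\begin{equation*}
X(t_0+T^{t_0,\epsilon,j}) - X(t_0) = \sum_{i=1}^d\int_{t_0}^{t_0+T^{t_0,\epsilon,j}}\mathcal{D}_iX(s)dB^i(s) + \int_{t_0}^{t_0+T^{t_0,\epsilon,j}}\mathcal{U}X(s)ds,
\end{equation*}
divide by $T^{t_0,\epsilon,j}$, and take $\mathbb{E}[\,\cdot\mid\mathcal{F}_{t_0}\vee\sigma(T^{t_0,\epsilon,j})]$. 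The drift term should converge to $\mathcal{U}X(t_0)$: since $\mathcal{U}X$ is a.s.\ continuous at $t_0$, and $T^{t_0,\epsilon,j}\to 0$ a.s.\ as $\epsilon\to0^+$, the averaged integral $\frac{1}{T^{t_0,\epsilon,j}}\int_{t_0}^{t_0+T^{t_0,\epsilon,j}}\mathcal{U}X(s)ds$ tends to $\mathcal{U}X(t_0)$ a.s., hence in probability; a uniform integrability bound from $\mathcal{U}X\in L^1_a(\mathbb{P}\times Leb)$ and the scaling $\mathbb{E}T^{t_0,\epsilon,j}=\epsilon^2$ lets one pass the conditional expectation through.

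The crux is to show the martingale part contributes nothing in the limit, i.e.
\begin{equation*}
\mathbb{E}\Bigg[\frac{1}{T^{t_0,\epsilon,j}}\sum_{i=1}^d\int_{t_0}^{t_0+T^{t_0,\epsilon,j}}\mathcal{D}_iX(s)dB^i(s)\;\Big|\;\mathcal{F}_{t_0}\vee\sigma(T^{t_0,\epsilon,j})\Bigg]\xrightarrow[\epsilon\to0^+]{}0\quad\text{in probability}.
\end{equation*}
I would split $\mathcal{D}_iX(s) = \mathcal{D}_iX(t_0) + (\mathcal{D}_iX(s)-\mathcal{D}_iX(t_0))$. For the constant part, the key observation is that conditionally on $\mathcal{F}_{t_0}$ the increment $B^j(t_0+T^{t_0,\epsilon,j})-B^j(t_0)$ takes the values $\pm\epsilon$ with equal probability and is independent of $T^{t_0,\epsilon,j}$ (this is exactly the structure underlying the definition of the skeleton $\mathscr{D}$), while for $i\neq j$ the stopped increment $\int_{t_0}^{t_0+T^{t_0,\epsilon,j}}dB^i$ is, conditionally on $\mathcal{F}_{t_0}\vee\sigma(T^{t_0,\epsilon,j})$, centered by the optional sampling theorem; hence the conditional expectation of $\mathcal{D}_iX(t_0)\big(B^i(t_0+T^{t_0,\epsilon,j})-B^i(t_0)\big)$ vanishes identically. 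For the remainder part I would use the absolute continuity of the paths $s\mapsto\mathcal{D}_iX(s)$ together with a.s.\ continuity of the weak derivative $\nabla^w\mathcal{D}_iX$ at $t_0$ to write $\mathcal{D}_iX(s)-\mathcal{D}_iX(t_0)=\int_{t_0}^s\nabla^w\mathcal{D}_iX(u)du = O(s-t_0)$ near $t_0$, so that the corresponding stochastic integral is bounded in $L^2$ by a quantity of order $T^{t_0,\epsilon,j}\cdot(T^{t_0,\epsilon,j})^{1/2}$; after dividing by $T^{t_0,\epsilon,j}$ and applying the conditional Jensen inequality one gets a bound of order $(T^{t_0,\epsilon,j})^{1/2}\to0$.

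The main obstacle I anticipate is the careful justification of the two conditioning arguments: (i) that $\mathbb{E}\big[\int_{t_0}^{t_0+T^{t_0,\epsilon,j}}\mathcal{D}_iX(t_0)dB^i\mid\mathcal{F}_{t_0}\vee\sigma(T^{t_0,\epsilon,j})\big]=0$ — here one must be precise about which $\sigma$-algebra is being conditioned on and invoke the fact that $T^{t_0,\epsilon,j}$ is a hitting time of $B^j$ only, so it is independent of the sign of the $B^j$-increment and, for $i\neq j$, the Brownian motion $B^i$ stopped at $T^{t_0,\epsilon,j}$ remains a martingale in the enlarged filtration; and (ii) controlling the error from replacing $\mathcal{D}_iX(s)$ by $\mathcal{D}_iX(t_0)$ uniformly enough to survive the conditioning, which is where the continuity assumption on $\nabla^w\mathcal{D}_iX$ at $t_0$ does the work. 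Once these are in place, combining the vanishing martingale contribution with the convergence of the averaged drift yields the claimed limit.
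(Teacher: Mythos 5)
Your overall decomposition (drift part converging to $\mathcal{U}X(t_0)$, martingale part with vanishing conditional expectation) is the same as the paper's, and your treatment of the frozen coefficient $\mathcal{D}_iX(t_0)\big(B^i(t_0+T^{t_0,\epsilon,j})-B^i(t_0)\big)$ is correct: for $i=j$ the increment is $\pm\epsilon$ with equal probability independently of $T^{t_0,\epsilon,j}$, and for $i\neq j$ independence of $B^i$ from $T^{t_0,\epsilon,j}$ gives conditional centering; this matches the paper's strong Markov argument. The genuine gap is in your remainder term
$\frac{1}{T^{t_0,\epsilon,j}}\int_{t_0}^{t_0+T^{t_0,\epsilon,j}}\big(\mathcal{D}_iX(s)-\mathcal{D}_iX(t_0)\big)dB^i(s)$.
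The claimed ``bounded in $L^2$ by a quantity of order $T^{t_0,\epsilon,j}\cdot (T^{t_0,\epsilon,j})^{1/2}$, then conditional Jensen'' is not justified as stated. First, the Lipschitz bound $|\mathcal{D}_iX(s)-\mathcal{D}_iX(t_0)|\le C(\omega)(s-t_0)$ holds only on an $\omega$-dependent neighborhood of $t_0$ with a random constant $C(\omega)\approx|\nabla^w\mathcal{D}_iX(t_0)|$ for which no integrability is assumed, so no genuine $L^2$ estimate with a deterministic rate is available without localization (and then you only get convergence in probability, which does not pass through conditional expectations without uniform integrability). Second, and more seriously, after conditioning on $\mathcal{G}_\epsilon=\mathcal{F}_{t_0}\vee\sigma(T^{t_0,\epsilon,j})$ you cannot invoke an It\^o-isometry-type bound for the case $i=j$: given the value of the exit time, $B^j$ on $[t_0,t_0+T^{t_0,\epsilon,j}]$ is no longer a martingale in the enlarged filtration, so $\mathbb{E}[(\int f\,dB^j)^2\mid\mathcal{G}_\epsilon]$ is not controlled by $\mathbb{E}[\int f^2 ds\mid\mathcal{G}_\epsilon]$; moreover, dividing by the random $T^{t_0,\epsilon,j}$ before taking expectations destroys the unconditional isometry as well. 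This is exactly the obstacle you flagged as (ii), but it is left unresolved.

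The paper avoids this by never estimating the stochastic integral directly: since $\mathcal{D}_iX$ has absolutely continuous (hence finite-variation) paths, it integrates by parts,
$\int_{t_0}^{t_0+T^{t_0,\epsilon,j}}\mathcal{D}_iX\,dB^i = B^i(t_0+T^{t_0,\epsilon,j})\mathcal{D}_iX(t_0+T^{t_0,\epsilon,j})-B^i(t_0)\mathcal{D}_iX(t_0)-\int_{t_0}^{t_0+T^{t_0,\epsilon,j}}B^i(s)\nabla^w\mathcal{D}_iX(s)\,ds$,
and expands the boundary term using $\mathcal{D}_iX(t_0+T^{t_0,\epsilon,j})=\mathcal{D}_iX(t_0)+\int_{t_0}^{t_0+T^{t_0,\epsilon,j}}\nabla^w\mathcal{D}_iX(s)ds$. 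After dividing by $T^{t_0,\epsilon,j}$ one is left only with time-averaged Lebesgue integrals of $\nabla^w\mathcal{D}_iX$ and $B^i\nabla^w\mathcal{D}_iX$ — which converge (conditionally) to $B^i(t_0)\nabla^w\mathcal{D}_iX(t_0)$ by the assumed continuity at $t_0$, and cancel each other — plus the term $\mathcal{D}_iX(t_0)\Delta B^i(t_0+T^{t_0,\epsilon,j})/T^{t_0,\epsilon,j}$, whose conditional expectation is zero as you observed. To repair your argument you would need either this integration-by-parts reduction or a careful localization plus a uniform-integrability argument allowing the passage of convergence in probability through $\mathbb{E}[\,\cdot\mid\mathcal{G}_\epsilon]$; as written, the key estimate does not go through.
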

\begin{proof}
We fix $j\in \{1,\ldots, d \}$. To shorten notation, we denote $\Delta Y(t_0 + T^{t_0,\epsilon,j}):= Y(t_0 + T^{t_0,\epsilon,j}) - Y(t_0)$ for an $\mathbb{F}$-adapted process $Y$. At first, we notice that
$\lim_{\epsilon\rightarrow 0^+}T^{t_0,\epsilon,j}=0$ in $L^1(\mathbb{P})$. We shall write

\begin{eqnarray*}
\frac{\Delta X(t_0+ T^{t_0,\epsilon,j})}{T^{t_0,\epsilon,j}} &=& \frac{1}{T^{t_0,\epsilon,j}}\int_{t_0}^{t_0+ T^{t_0,\epsilon,j}}\mathcal{U}X(s)ds\\
&+& \sum_{i=1}^d\frac{1}{T^{t_0,\epsilon,j}}\int_{t_0}^{t_0+ T^{t_0,\epsilon,j}}\mathcal{D}_iX(s)B^i(s).
\end{eqnarray*}
Since $\mathcal{U}X$ is continuous at $t_0$ a.s, then $\frac{1}{T^{t_0,\epsilon,j}}\int_{t_0}^{t_0+ T^{t_0,\epsilon,j}}\mathcal{U}X(s)ds\rightarrow \mathcal{U}X(t_0)$ in probability as $\epsilon\rightarrow 0^+$. One can check $\mathcal{F}_{t_0}\vee \sigma(T^{t_0,\epsilon,j})\rightarrow \mathcal{F}_{t_0}$ weakly (in the sense of \cite{coquet1}) as $\epsilon\downarrow 0$, then we shall use Doob inequality for martingales to state that
\begin{equation}\label{st5}
\lim_{\epsilon\rightarrow 0^+}\mathbb{E}\Bigg[\frac{1}{T^{t_0,\epsilon,j}}\int_{t_0}^{t_0 + T^{t_0,\epsilon},j}\mathcal{U}X(s)ds|\mathcal{F}_{t_0}\vee \sigma(T^{t_0,\epsilon,j})\Bigg] = \mathcal{U}X(t_0)
\end{equation}
in probability. We now claim that for each $i\in \{1, \ldots, d\}$, we have

\begin{equation}\label{st6}
\lim_{\epsilon\rightarrow 0^+}\mathbb{E}\Bigg[\frac{1}{T^{t_0,\epsilon,j}}\int_{t_0}^{t_0 + T^{t_0,\epsilon,j}}\mathcal{D}_iX(s)dB^i(s)\big|\mathcal{F}_{t_0}\vee\sigma(T^{t_0,\epsilon,j})\Bigg] = 0\quad \text{in probability}.
\end{equation}

We fix $i\in \{1, \ldots, d\}$. By assumption $\mathcal{D}_iX$ has absolutely continuous paths and $\nabla^w \mathcal{D}_iX (\cdot)$ is continuous at $t_0$ a.s. In this case, integration by parts yields

\begin{eqnarray*}
\int_{t_0}^{t_0+T^{t_0,\epsilon,j}}\mathcal{D}_iX(s)dB^i(s) &=& B^i(t_0 + T^{t_0,\epsilon,j})\mathcal{D}_iX(t_0+T^{t_0,\epsilon,j})\\
&-& B^i(t_0)\mathcal{D}_iX(t_0)- \int_{t_0}^{t_0+T^{t_0,\epsilon,j}}B^i(s)\nabla^w \mathcal{D}_iX(s)ds.
\end{eqnarray*}
Similarly to (\ref{st5}), we have

\begin{equation}\label{st7}
\lim_{\epsilon\rightarrow 0^+}\mathbb{E}\Bigg[\frac{1}{T^{t_0,\epsilon,j}}\int_{t_0}^{t_0 + T^{t_0,\epsilon,j}}B^i(s)\nabla^w \mathcal{D}_iX(s)ds\big|\mathcal{F}_{t_0}\vee\sigma(T^{t_0,\epsilon,j})\Bigg] = B^i(t_0)\nabla^w\mathcal{D}X(t_0),
\end{equation}
in probability. Let us write

\begin{eqnarray*}
 B^i(t_0 + T^{t_0,\epsilon,j})\mathcal{D}_iX(t_0+T^{t_0,\epsilon,j})&=& B^i(t_0)\mathcal{D}_iX(t_0)\\
&+& B^i(t_0+T^{t_0,\epsilon,j})\int_{t_0}^{t_0+T^{t_0,\epsilon,j}}\nabla^w\mathcal{D}_iX(s)ds\\
&+& \mathcal{D}_iX(t_0)\Delta B^i(t_0+T^{t_0,\epsilon,j}).
\end{eqnarray*}
Similarly to (\ref{st5}) and by using the regularity assumption of $\nabla^w \mathcal{D}_iX$ at $t_0$, we  know that
\begin{small}
$$\lim_{\epsilon\rightarrow 0^+}\mathbb{E}\Big[B^i(t_0+T^{t_0,\epsilon,j})\frac{1}{T^{t_0,\epsilon,j}}\int_{t_0}^{t_0+T^{t_0,\epsilon,j}}\nabla^w\mathcal{D}_iX(s)ds|\mathcal{F}_{t_0}\vee \sigma(T^{t_0,\epsilon,j})\Big] = B^i(t_0)\nabla^w \mathcal{D}_iX(t_0)$$
\end{small}
in probability. The strong Markov property of the Brownian motion yields
\begin{small}
\begin{eqnarray*}
\mathbb{E}\Big[\frac{\mathcal{D}_iX(t_0)}{T^{t_0,\epsilon,j}}\Delta B^i(t_0+T^{t_0,\epsilon,j})|\mathcal{F}_{t_0}\vee \sigma(T^{t_0,\epsilon,j})\Big] &=& \frac{\mathcal{D}_iX(t_0)}{T^{t_0,\epsilon,j}}\\
&\times &\mathbb{E}\Big[\Delta B^i(t_0+T^{t_0,\epsilon,j})|\mathcal{F}_{t_0}\vee \sigma(T^{t_0,\epsilon,j})\Big]\\
&=&0~a.s.
\end{eqnarray*}
\end{small}
From (\ref{st7}), we then have shown that (\ref{st6}) holds true. This concludes the proof.
\end{proof}
\subsection{Proof of Theorem \ref{bsdecor}}\label{Appendixbsdecor}

Let $\mathbb{H}$ be the Cameron-Martin space associated with the one-dimensional Wiener measure and let $L^2_a(\Omega; \mathbb{H})$ be the $\mathbb{P}$-equivalent class of $\mathbb{H}$-valued random variables such that

$$\mathbb{E}\|u\|^2_{\mathbb{H}}:=\mathbb{E}\int_0^T |\dot{u}(s)|^2ds< \infty$$
where the Radon-Nikodym derivative $\dot{u}$ is $\mathbb{F}$-adapted. One can readily see that the linear operator

$$m:L^2(\mathcal{F}_T)\rightarrow L^2_a(\Omega; \mathbb{H}): F\mapsto m(F):=\int_0^\cdot\mathbb{E}[F|\mathcal{F}_s]ds$$
is bounded and its adjoint $m^*$ is given by $m^*(u):=\int_0^T\dot{u}(s)ds$. Therefore, we have the following orthogonal decomposition

$$
L^2_a(\Omega;\mathbb{H}) = (ker~m^*) \bigoplus (ker~m^*)^\perp.
$$
In particular, a classical result in functional analysis says that $(ker~m^*)^\perp  = \overline{m(L^2(\mathcal{F}_T))}$ in $L^2_a(\Omega; \mathbb{H})$. By construction, we then observe that

$$(ker~m^*)^\perp = \{Y\in L^2_a(\Omega; \mathbb{H}); \dot{Y}~\text{is an}~\mathbb{F}-\text{martingale}\}.$$
Therefore, for each $X\in L^2_a(\Omega; \mathbb{H})$, there exists a unique pair $(M^X,N^X)$ which realizes

\begin{equation}\label{direct}
X(\cdot) = \int_0^\cdot M^X(s)ds  + \int_0^\cdot N^X(s)ds
\end{equation}
where $M^X$ is an $\mathbb{F}$-martingale and $\int_0^T N^X(s)ds=0~a.s$. In particular,

$$\mathbb{E}\|X\|^2_{\mathbb{H}} = \mathbb{E}\int_0^T |M^X(s)|^2ds +  \mathbb{E}\int_0^T |N^X(s)|^2ds; X\in L^2_a(\Omega;\mathbb{H}).$$
In the sequel, we denote $\mathcal{M} :=(ker~m^*)^\perp$ and $\text{proj}_\mathcal{M}:L^2_a(\Omega; \mathbb{H})\rightarrow\mathcal{M}$ is the orthogonal projection operator.
\begin{proposition}\label{basicres}
Let us fix $F\in L^2(\mathcal{F}_T)$ and we assume the set $\{X\in L_a^2(\Omega; \mathbb{H}); X(T)=F\}$ is not empty. If $X\in \mathcal{M}$ and $X(T) = F$, then

\begin{equation}\label{j1}
X \in \arg\min_{Y\in L^2_a(\Omega;\mathbb{H}); Y(T)=F}\mathbb{E}\|Y\|^2_{\mathbb{H}}.
\end{equation}
Reciprocally, if $X$ satisfies (\ref{j1}), then $X\in \mathcal{M}$.
\end{proposition}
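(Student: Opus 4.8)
The plan is to recognize that Proposition~\ref{basicres} is, once unwound, exactly the classical Hilbert–space statement that the norm-minimizing point of a closed affine subspace is the unique point orthogonal to the direction space. First I would record two elementary identifications. Since every element of $\mathbb{H}$ vanishes at the origin, $m^*(u)=\int_0^T\dot u(s)\,ds=u(T)$ for $u\in L^2_a(\Omega;\mathbb{H})$; hence $\ker m^*=\{u\in L^2_a(\Omega;\mathbb{H}):u(T)=0\}$ is a closed subspace (as $m^*$ is bounded), and, fixing any element $X_0$ of the assumed nonempty set $C_F:=\{Y\in L^2_a(\Omega;\mathbb{H}):Y(T)=F\}$, one has $C_F=X_0+\ker m^*$. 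Second, under the orthogonal decomposition $L^2_a(\Omega;\mathbb{H})=\mathcal{M}\oplus\ker m^*$, the representation (\ref{direct}) is precisely the decomposition of $X$ into its $\mathcal{M}$-part $\int_0^\cdot M^X(s)\,ds$ (whose derivative is a martingale) and its $\ker m^*$-part $\int_0^\cdot N^X(s)\,ds$ (whose terminal value is $\int_0^T N^X(s)\,ds=0$); in other words $\text{proj}_{\mathcal{M}}(X)=\int_0^\cdot M^X(s)\,ds$. I would set up this dictionary and use it throughout.

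For the forward implication I would take $X\in\mathcal{M}$ with $X(T)=F$ and an arbitrary $Y\in C_F$. Then $Y-X\in\ker m^*$, so $Y-X$ is orthogonal to $X$ in $L^2_a(\Omega;\mathbb{H})$ because $X\in\mathcal{M}=(\ker m^*)^\perp$, and the Pythagorean identity gives
\[
\mathbb{E}\|Y\|_{\mathbb{H}}^2=\mathbb{E}\|X\|_{\mathbb{H}}^2+\mathbb{E}\|Y-X\|_{\mathbb{H}}^2\ge \mathbb{E}\|X\|_{\mathbb{H}}^2 ,
\]
which is exactly (\ref{j1}). For the converse I would assume $X$ realizes (\ref{j1}); in particular $X\in C_F$. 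Set $\tilde X:=\text{proj}_{\mathcal{M}}(X)$. Since $X-\tilde X\in\ker m^*$ we get $(X-\tilde X)(T)=0$, hence $\tilde X(T)=X(T)=F$ and $\tilde X\in C_F$; by the forward implication $\tilde X$ is then a minimizer, so $\mathbb{E}\|X\|_{\mathbb{H}}^2=\mathbb{E}\|\tilde X\|_{\mathbb{H}}^2$. Combining this with the orthogonal identity $\mathbb{E}\|X\|_{\mathbb{H}}^2=\mathbb{E}\|\tilde X\|_{\mathbb{H}}^2+\mathbb{E}\|X-\tilde X\|_{\mathbb{H}}^2$ forces $\mathbb{E}\|X-\tilde X\|_{\mathbb{H}}^2=0$, i.e.\ $X=\tilde X\in\mathcal{M}$.

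There is essentially no hard step here; the only points that deserve a line of care are the identity $m^*(u)=u(T)$ (so that $C_F$ is genuinely a translate of $\ker m^*$) and the observation that (\ref{direct}) coincides with the orthogonal splitting along $\mathcal{M}\oplus\ker m^*$, both immediate from the definitions already given in the excerpt. As a byproduct the argument also yields that $\arg\min$ in (\ref{j1}) is the single point $\text{proj}_{\mathcal{M}}(X_0)=\mathcal{M}\cap C_F$, since two elements of $\mathcal{M}\cap C_F$ differ by an element of $\mathcal{M}\cap\ker m^*=\{0\}$; and I would note that the nonemptiness hypothesis on $C_F$ is used only to make the $\arg\min$ meaningful, being automatic in each implication from the $X$ at hand.
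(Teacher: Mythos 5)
Your proof is correct and follows essentially the same route as the paper's: the forward direction is the same Pythagorean argument using $Y-X\in\ker m^*$ and $\mathcal{M}=(\ker m^*)^\perp$, and the converse likewise shows that $\text{proj}_{\mathcal{M}}(X)$ stays in the constraint set (via $\int_0^T N^X(s)\,ds=0$, i.e.\ $m^*(u)=u(T)$) and is itself a minimizer, so orthogonality forces $X=\text{proj}_{\mathcal{M}}(X)\in\mathcal{M}$. Your streamlining of the converse (applying the projection only to $X$ rather than to all of $\mathcal{G}(F)$) and the added uniqueness remark are fine but do not change the substance.
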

\begin{proof}
Let us denote $\mathcal{G}(F) = \{Y\in L^2_a(\Omega;\mathbb{H}); Y(T)=F\}$. Assume that $X\in \mathcal{G}(F)\cap \mathcal{M}$. By the very definition, $m^*(Y) = m^*(X)=F$ for every $Y\in \mathcal{G}(F)$. In other words, $Y-X\in \text{ker}~m^*$ for every $Y\in \mathcal{G}(F)$. By writing $Y = Y-X + X$, we then have

$$\mathbb{E}\|Y\|^2_{\mathbb{H}} = \mathbb{E}\|Y - X\|^2_{\mathbb{H}} + \mathbb{E}\|X\|^2_{\mathbb{H}}\ge \mathbb{E}\|X\|^2_{\mathbb{H}}~\forall Y\in \mathcal{G}(F).$$
Reciprocally, let us assume that $X$ satisfies (\ref{j1}). At first, we observe that if $u\in \mathcal{G}(F)$, then by using the direct sum decomposition (\ref{direct}), we have

\begin{eqnarray*}
F&=&u(T) = \int_0^T M^u(s)ds + \int_0^T N^u(s)ds\\
& &\\
&=&\int_0^T M^u(s)ds
\end{eqnarray*}
because $\int_0^T N^u(s)ds=0~a.s$. This shows that $\text{proj}_\mathcal{M}(u)\in \mathcal{G}(F)$ for every $u\in \mathcal{G}(F)$. More importantly, by repeating the above argument, we actually have

\begin{equation}\label{j2}
\text{proj}_\mathcal{M}(u) \in \arg\min_{Y\in \mathcal{G}(F)}\mathbb{E}\|Y\|^2_{\mathbb{H}}
\end{equation}
for every $u\in \mathcal{G}(F)$. We now claim that

\begin{equation}\label{claim1}
X\in \arg\min_{Y\in \mathcal{G}(F)}\mathbb{E}\|Y\|^2_{\mathbb{H}}\Longrightarrow X \in \mathcal{M}.
\end{equation}
Indeed, if $X$ realizes (\ref{claim1}), then by using (\ref{j2}), we have

$$
\|X\|_{L^2_a(\Omega;\mathbb{H})} = \|\text{proj}_\mathcal{M}(Y)\|_{L^2_a(\Omega;\mathbb{H})}
$$
for every $Y\in\mathcal{G}(F)$ and this implies

\begin{equation}\label{j4}
\|X\|_{L^2_a(\Omega;\mathbb{H})} = \|\text{proj}_\mathcal{M}(X)\|_{L^2_a(\Omega;\mathbb{H})}.
\end{equation}
Let us write $X = (X-\text{proj}_\mathcal{M}(X))+\text{proj}_\mathcal{M}(X)$ where $(X-\text{proj}_\mathcal{M}(X))\in ker~(\text{proj}_{\mathcal{M}})$. By recalling that $\text{Range}~^\perp~(\text{proj}_\mathcal{M}) = ker~(\text{proj}_{\mathcal{M}})$, we then have

\begin{equation}\label{j5}
\|X\|^2_{L^2_a(\Omega;\mathbb{H})} = \|X-\text{proj}_\mathcal{M}(X)\|^2_{L^2_a(\Omega;\mathbb{H})} + \|\text{proj}_\mathcal{M}(X)\|^2_{L^2_a(\Omega;\mathbb{H})}.
\end{equation}
From (\ref{j4}) and (\ref{j5}), we must have $\|X-\text{proj}_\mathcal{M}(X)\|_{L^2_a(\Omega;\mathbb{H})}=0 = \text{dist}(X,\mathcal{M})$. Since $\mathcal{M}$ is closed, we then conclude that $X\in \mathcal{M}$.
\end{proof}
We are now able to prove Theorem \ref{bsdecor}.

\begin{proof}
Assume that $(Y,Z)$ is a strong solution of (\ref{BSDE}). Then, $Y$ is a square-integrable It\^o process of the form

$$Y(t) = Y(0) + \sum_{j=1}^d \int_0^tZ^j(s)dB^j(s) - \int_0^t g(s,Y(s),Z(s))ds; 0\le t \le T.$$
By Theorem \ref{ItoP}, we must have $\mathcal{D}_jY = Z^j; 1\le j\le d$ and the following identity holds true

$$\mathcal{U}Y(t) + g(t,Y(t),\mathcal{D}Y(t))=0; 0\le t\le T.$$
This shows (\ref{FSDE}). Reciprocally, if $Y$ is a square-integrable It\^o processes satisfying (\ref{FSDE}), then applying again Theorem \ref{ItoP}, we must have

$$Y(t) = Y(0) + \sum_{j=1}^d\int_0^t\mathcal{D}_j Y(s)dB^j(s) -\int_0^tg(s,Y(s),\mathcal{D}Y(s))ds; 0\le t\le T$$
where $Y(T)=\xi$ a.s. This shows that $Y$ solves (\ref{BSDE}). In particular, the initial condition is

$$Y(0) = \mathbb{E}\Big[\xi + \int_0^Tg(s,Y(s),\mathcal{D}Y(s)ds \Big]=\mathbb{E}\Big[\xi - \int_0^T\mathcal{U}Y(s)ds \Big].$$
Now, let $G(g,\xi) := \{X\in L^2_a(\Omega;\mathbb{H}); X(T)=\mathbb{Y}^{(g,\xi)}(T)\}$. Of course, $G(g,\xi)$ is  not empty. Let $Y$ be a solution of (\ref{FSDE}). Then, $\mathcal{U}Y+g(Y,\mathcal{D}Y)=0$ and hence

$$Y(\cdot) - Y(0) + \int_0^\cdot g(s,Y(s),\mathcal{D}Y(s))ds~\text{is a Brownian martingale}.$$
Therefore, we shall apply Proposition \ref{basicres} to conclude (\ref{j6}). Reciprocally, suppose that (\ref{j6}) holds true. Then, we apply Proposition \ref{basicres} to state that $\Lambda (Y,g,\xi)$ is a martingale. By applying Theorem \ref{ItoP}, we conclude that $Y$ is a solution of (\ref{FSDE}).
\end{proof}

\section*{Acknowledgements}
The authors are grateful to several referees and to an Associate Editor for their
careful reading of the first versions of the manuscript which has allowed them to
considerably improve the quality of the paper. A.Ohashi would like to thank UMA-ENSTA-ParisTech for the very kind hospitality during the last stage of this project as well as Francesco Russo for stimulating discussions and suggestions that helped shaping the theory. He also acknowledges the financial support from ENSTA ParisTech.

\end{document}